\newtheorem{theorem}{Theorem}[section]
\newtheorem{thm}[theorem]{Theorem}
\newtheorem{prop}[theorem]{Proposition}
\newtheorem{lemma}[theorem]{Lemma}
\newtheorem{corollary}[theorem]{Corollary}
\newtheorem{example}[theorem]{Example}
\newtheorem{assump}[theorem]{Assumption}
\newtheorem{remark}[theorem]{Remark}
\numberwithin{equation}{section} % equations are numbered based on section 
\def\R{\mathbb{R}}
\def\eps{\varepsilon}
\def\lb{{(\ell)}}
\def\l{\ell}
\def\cmu{\mu}
\def\E{\mathbf{E}}
\def\Tnum{\Theta^{A,\kappa}_{\Delta t, \eps_{\mathrm{tol}}}}
\DeclareMathOperator{\rank}{rank}
\begin{document}
\title{Non-reversible sampling schemes on submanifolds}

\author{
  Upanshu Sharma\thanks{Fachbereich Mathematik und Informatik, Freie Universit\"at Berlin, Arnimallee 9, 14195 Berlin, Germany.\ Email: 
  upanshu.sharma@fu-berlin.de}
  \and Wei Zhang\thanks{Zuse Institute Berlin, Takustrasse 7, 14195 Berlin, Germany.\  Email: wei.zhang@fu-berlin.de} 
}

\maketitle

\begin{abstract}
Calculating averages with respect to probability measures on submanifolds is
  often necessary in various application areas such as molecular dynamics,
  computational statistical mechanics and Bayesian statistics. In recent
  years, various numerical schemes have been proposed in the literature to
  study this problem based on appropriate reversible constrained stochastic
  dynamics. In this paper we present and analyse a non-reversible
  generalisation of the projection-based scheme developed by one of the
  authors [ESAIM: M2AN, {\bf 54} (2020), pp.\ 391--430]. This scheme consists
  of two steps -- starting from a state on the submanifold, we first update the state
  using a non-reversible stochastic differential equation which takes the
  state away from the submanifold, and in the second step we project the state
  back onto the manifold using the long-time limit of an ordinary differential
  equation. We prove the consistency of this numerical scheme and provide
  quantitative error estimates for estimators based on finite-time running
  averages. Furthermore, we present theoretical analysis which shows that this
  scheme outperforms its reversible counterpart in terms of asymptotic variance. 
  We demonstrate our findings on an illustrative test example.
\end{abstract}

\begin{keywords}
submanifold, constrained sampling, non-reversible process, reaction coordinate, conditional probability measure
\end{keywords}

%\begin{AMS}
%65C05, 58J65
%\end{AMS}

\section{Introduction}\label{sec:Intro}
Sampling probability measures on submanifolds is a relevant numerical task in
various research fields such as molecular dynamics (MD), computational statistical
mechanics and Bayesian
statistics~\cite{LelievreRoussetStoltz10,Tony-constrained-langevin2012,goodman-submanifold,manifold-mcmc-Bayesian}.
In MD, usual systems under consideration are extremely high dimensional and
quantities of interest evolve at time scales which are order of magnitudes
larger than those achievable by typical numerical
methods~\cite{Leimkuhler-Matthews-MD-book}. Therefore in practice it is common
to project the system onto a lower-dimensional set of variables which capture
the relevant behaviour of the system by means of a so-called \emph{reaction
coordinate} (also called collective variables or coarse-graining map in the literature)
\begin{equation}
  \xi=(\xi_1,\xi_2, \dots,\xi_k)^T: \R^d\rightarrow \R^k, \quad \mbox{where}~ \xi_\alpha: \R^d\rightarrow\R, \quad 1 \le \alpha \le k< d\,.
  \label{def-xi}
\end{equation}
Working with these reaction coordinates often requires us to compute averages on its level-sets
\begin{equation}\label{levelset-sigma-intro}
  \Sigma:=\xi^{-1}(\bm{0})=\Bigl\{ x \in \mathbb{R}^d ~\Big|~ \xi(x) = \bm{0} \in \mathbb{R}^k\Bigr\},
\end{equation}
with respect to certain probability measures. One particularly important probability measure which is central to this paper is the so-called 
\emph{conditional probability measure} $\cmu$ on $\Sigma$~\cite{blue-moon,legoll2010effective,ZhangHartmannSchutte16}, 
\begin{align}\label{mu1-intro}
d\cmu := \frac{1}{Z} \mathrm{e}^{-\beta U} \big[\mbox{det}(\nabla\xi^T
\nabla\xi)\big]^{-\frac{1}{2}} d\nu_\Sigma.
\end{align}
In \eqref{mu1-intro}, $\beta > 0$ is a positive constant related to the inverse of system's temperature, $U: \mathbb{R}^d \rightarrow
\mathbb{R}$ is a $C^2$-smooth potential, $\nabla\xi\in\R^{d\times k}$ is
the Jacobian of $\xi$, $\nu_\Sigma$ is the (normalised) surface measure on $\Sigma$ induced from the Lebesgue measure on $\R^d$ and $Z$ is the normalisation constant
\begin{equation*}
Z:=\int_{\Sigma}\mathrm{e}^{-\beta U}\big[\mbox{det}(\nabla\xi^T
\nabla\xi)\big]^{-\frac{1}{2}} d\nu_\Sigma\, < \infty,
\end{equation*}
which ensures that $\cmu$ is a probability measure. Calculating averages with respect to $\cmu$ on $\Sigma$ is an ubiquitous challenge in computation of
thermodynamic quantities pertaining to free
energy~\cite{blue-moon,LelievreRoussetStoltz10,Tony-constrained-langevin2012,non-equilibrium-2018}.
A relatively new albeit important application, which in part motivates this
paper, is the so-called \emph{effective dynamics} which arises as the approximation of  $\xi$-projection of diffusion processes, and whose coefficients involve the aforementioned averages~\cite{legoll2010effective,ZhangHartmannSchutte16,legoll2017pathwise,DLPSS18,LelievreZhang18,LegollLelievreSharma18,HartmannNeureitherSharma20}. 

In recent years, several numerical algorithms have been developed to sample probability measures on
submanifolds~\cite{projection_diffusion,hmc-submanifold-tony,pmlr-v22-brubaker12,goodman-submanifold,Zhang20,multiple-projection-mcmc-submanifolds}.
A common feature in all these algorithms is that they are essentially
reversible, i.e.\ either based on reversible SDEs on submanifolds or using
reversible Markov chains. Meanwhile, it is well known that non-reversible dynamics on $\mathbb{R}^d$ offer considerable advantages over their reversible counterparts when sampling
probability measures, for instance improved convergence rates and reduced
asymptotic variance~\cite{HwangHwangSheu93,lelievre2013optimal,DuncanLelievrePavliotis16,improve-reversible-sampler,DuncanPavliotisZygalakis17,LuSpiliopoulos18}.
Inspired by these developments, the central aim of this paper is:
\begin{center}
\emph{Develop and analyse a non-reversible algorithm for sampling the
  conditional probability measure $\mu$ on the level-set $\Sigma$
  \eqref{levelset-sigma-intro} of possibly nonlinear reaction coordinate $\xi$.}
\end{center}
Specifically, in this paper we will focus on a non-reversible generalisation conjectured by one of the authors in~\cite[Remark 3.8]{Zhang20}. 
At a current state on $\Sigma$, the scheme that we propose in this paper consists of two steps:
\begin{enumerate}[label=(\arabic*)]%,itemsep=0pt,topsep=0pt
\item First, the state is updated using a discrete scheme that is linked to a non-reversible diffusion process, which pushes the state out but in close vicinity of $\Sigma$.
\item Second, the state is projected back to $\Sigma$ using the long-time limit of an appropriate ordinary differential equation (ODE). 
\end{enumerate}

We need two quantities to present the numerical scheme. Let $A\in \mathbb{R}^{d\times d}$ be a constant skew-symmetric matrix, i.e.\  $A^T=-A$. Furthermore, let $\sigma:\R^d\rightarrow\R^{d\times d_1}$  with integer $d_1\ge d$, for which $a:=\sigma\sigma^T:\R^d\rightarrow\R^{d\times d}$ is uniformly positive definite. 
The \emph{non-reversible numerical scheme}, which is the central focus of this paper, is 
\begin{align}\label{scheme-non-reversible}
  \begin{split}
    x^{(\ell + \frac{1}{2})}_i &= x^\lb_i + \sum_{j=1}^{d} \Big( (A_{ij}-a_{ij})\frac{\partial U}{\partial x_j}  
%    -a_{ij} \frac{\partial U}{\partial x_j}   
    + \frac{1}{\beta}\frac{\partial
    a_{ij}}{\partial x_j}\Big)(x^\lb)\, h
    + \sqrt{2 \beta^{-1}h}\, \sum_{j=1}^{d_1}\sigma_{ij}(x^\lb)\, \eta^\lb_j\,, \quad 1 \le i \le
  d \,,\\
  x^{(\ell+1)} &= \Theta^A\big(x^{(\ell+\frac{1}{2})}\big)\,,
\end{split}
\end{align}
for $\ell = 0, 1, \dots$,
where $x^{(0)}\in \Sigma$, $h$ is the step-size, 
$\bm{\eta}^{(\ell)}=(\eta_1^{(\ell)},\dots,\eta_{d_1}^{(\ell)})^{T}\in\mathbb{R}^{d_1}$ where $\eta_i^{(\ell)}$ are independent and identically distributed \emph{bounded} random variables, which for some constant $C_\eta>0$ satisfy
\begin{equation}
  \begin{aligned}
    & |\eta_i^{(\ell)}| \le C_\eta < +\infty \quad \text{almost surely}\,,\\
    & \E[\eta_i^{(\ell)}]= \E[(\eta_i^{(\ell)})^3] = 0\,,\quad 
  \E[(\eta_i^{(\ell)})^2]=1\,,\quad \forall\, 1 \le i \le d_1, \quad \forall\, \ell \ge 0\,.
  \end{aligned}
  \label{conditions-on-eta}
\end{equation}
See Remark~\ref{rmk-on-choice-of-noise}, Remark~\ref{rmk-well-posedness-of-limit-theta} and ~\cite[Remark 3.2]{rk-schemes-submanifolds} for the motivation behind this choice of bounded random variables. 

The map $\Theta^A:\R^d\rightarrow\R^d$ used in \eqref{scheme-non-reversible} is defined via the long-time limit
\begin{equation}
\Theta^{A}(x)=\lim\limits_{s\rightarrow +\infty} \varphi^{A}(x,s),
  \label{def-theta-map-intro}
\end{equation} 
  where, for any $x\in \mathbb{R}^d$, $\varphi^A:\R^d\times[0,+\infty)\rightarrow\R^d$ is the solution to the ODE 
\begin{align}  \label{phi-map-a-intro}
  \begin{split}
    \frac{d\varphi^{A}(x,s)}{ds} =& -((a-A)\nabla
    F\big)\big(\varphi^{A}(x,s)\big)\,,\quad s \ge 0 \,,\\
    \varphi^{A}(x,0) =\,& x, 
\end{split}
\end{align}
with the function $F:\R^d\rightarrow\R$ given by
\begin{equation}
  F(x):=\frac{1}{2}|\xi(x)|^2=\frac{1}{2}\sum_{\alpha=1}^{k}\xi_\alpha^2(x)\,.
\label{fun-cap-f-intro}
\end{equation}
While our analysis applies to the case of general matrices $a$, the choice
$a=\sigma=I_d$ (i.e.\ identity matrix of order $d$) is particularly interesting
due to its simplicity, in which case the scheme \eqref{scheme-non-reversible} becomes 
\begin{align}\label{scheme-non-reversible-id}
  \begin{split}
    x^{(\ell + \frac{1}{2})} &= x^\lb + (A-I_d) \nabla U (x^\lb)\, h
    + \sqrt{2 \beta^{-1}h}\, \bm{\eta}^\lb\,, \\
  x^{(\ell+1)} &= \Theta^A\big(x^{(\ell+\frac{1}{2})}\big)\,.
\end{split}
\end{align}

Note that the appearance of the skew-symmetric matrix $A$ in both steps of the
numerical scheme~\eqref{scheme-non-reversible} is not a coincidence.
To see this, let us motivate~\eqref{scheme-non-reversible} by considering the
so-called (non-reversible) \emph{soft-constrained dynamics}~\cite{projection_diffusion}
\begin{equation}\label{eq:soft-const}
  d X^{i,\eps}_s  = \sum_{j=1}^d \Bigl[ (A_{ij}-a_{ij}) \frac{\partial}{\partial
x_j}\Big(U+\frac{1}{\eps} F\Big)\,
  + \frac{1}{\beta} \frac{\partial a_{ij}}{\partial x_j}\Bigr](X^{\eps}_s) \,ds
 + \sqrt{2\beta^{-1}} \sum_{j=1}^{d_1}\sigma_{ij}(X^{\eps}_s)\, dW^j_s\,,
  \quad 1 \le i \le d\,,
\end{equation}
where $\eps>0$, $X^{\eps}_s=(X^{1,\eps}_s, \ldots, X^{d,\eps}_s)^T\in \mathbb{R}^d$, and $W_s=
(W^1_s, \ldots, W^{d_1}_s)^T\in \mathbb{R}^{d_1}$ is a $d_1$-dimensional Brownian motion.
Under fairly general conditions on the coefficients, \eqref{eq:soft-const} is ergodic with respect to the $\eps$-dependent probability measure
\begin{equation}\label{mu-eps}
\forall \,x \in\R^d: \   d\mu^\eps(x) = \frac{1}{Z^\eps} \exp\Big[-\beta\Big(U(x) + \frac{1}{\eps}F(x) \Big)\Big]\,dx\,,
\end{equation}
with the corresponding normalisation constant $Z^\eps$. A straightforward calculation shows that $\mu^\eps$ defined on $\R^d$ converges to $\cmu$ \eqref{mu1-intro}
defined on $\Sigma$ (see Lemma~\ref{lem:mueps-conv} and ensuing discussion).
While the dynamics~\eqref{eq:soft-const} has the nice property that for small $\eps$
it typically stays close to the manifold $\Sigma$, it has drawbacks when directly used in sampling tasks. Theoretically, analysing the discrete version of
\eqref{eq:soft-const} is difficult because it involves both the finite step-size $h$ and the small parameter $\eps$.
Numerically, difficulties arise when using \eqref{eq:soft-const} in practice,
since small $\eps$ is required to ensure reliable sampling, which in turn restricts the step-size of the numerical discretisation (see Figure~\ref{fig-soft-constraint}
for a concrete example where the estimation error using \eqref{eq:soft-const} depends rather sensitively on the choice of both $\eps$ and step-size).
The scheme~\eqref{scheme-non-reversible} can be viewed as a two-step numerical method which handles the non-stiff part, i.e.\ the $\eps$-independent terms of~\eqref{eq:soft-const}, via propagation without the constraint and the stiff part, i.e.\  the $\eps$-dependent terms of~\eqref{eq:soft-const}, via a projection onto $\Sigma$ under $\Theta^A$.  
\begin{figure}[h!]
\includegraphics[width=0.5\textwidth]{./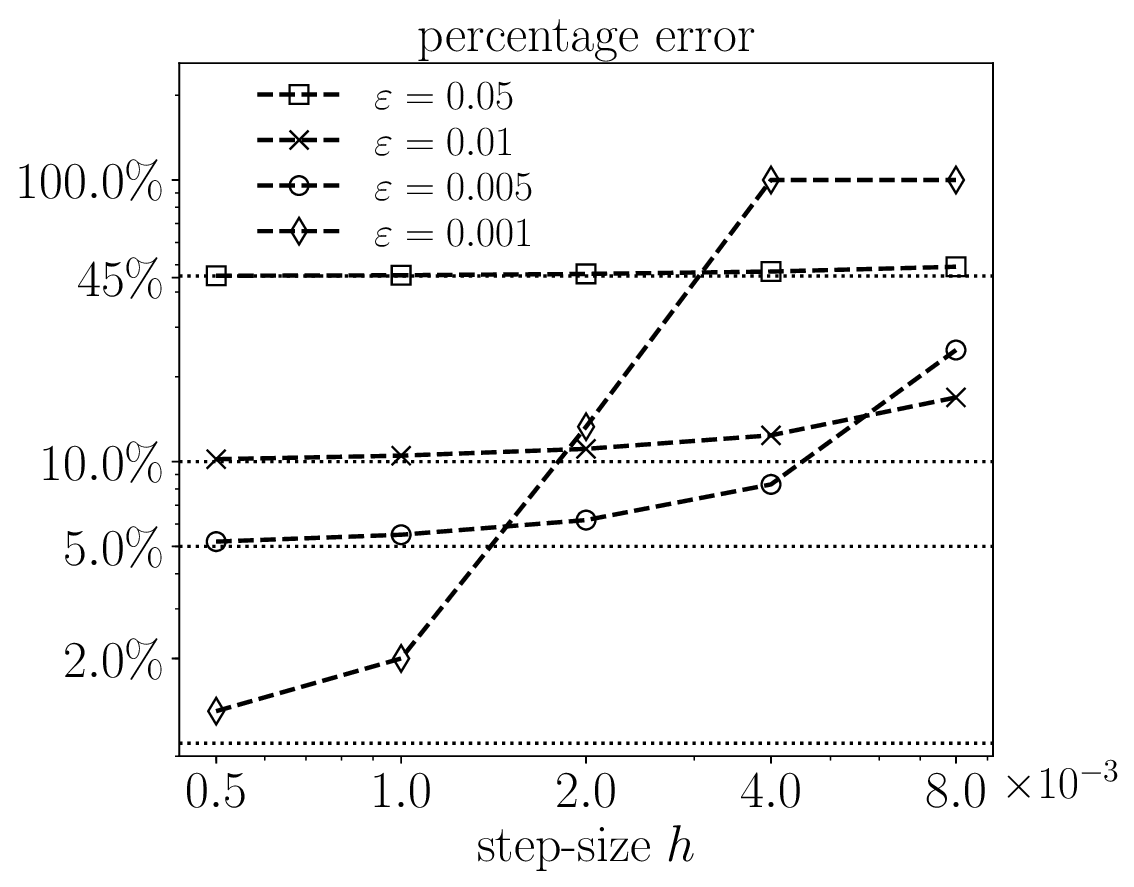}
\centering
  \caption{Percentage error for the estimation of the mean value $\mathbf{E}_\mu(f)$ with respect
  to the probability measure $\mu$ \eqref{mu1-intro} over the level-set
  $\Sigma$ \eqref{levelset-sigma-intro}, computed using numerical
  discretisation of the soft-constrained dynamics \eqref{eq:soft-const} with
  different $\eps>0$ and step-sizes $h$. Here $f(x)=3 + 2
  \cos(\pi \sqrt{x_1^2+x_2^2})$ and $\xi(x)=\frac{1}{2}(x_1^2+x_2^2-1)$, for $x=(x_1,x_2)\in \mathbb{R}^2$. 
  The horizontal dotted lines are the mean values of $f$ in $\mathbb{R}^2$ with respect to $ \mu^\eps$ \eqref{mu-eps} for different values of $\eps$. 
 For $\eps=0.001$, the percentage error $100 \%$ indicates that the
    numerical discretisation of \eqref{eq:soft-const} is unstable for the step-sizes $h=4.0 \times 10^{-3}$
    and $h=8.0 \times 10^{-3}$. To have percentage error less than $5\%$
    (respectively $2\%$), one has to choose $\eps$ to be $0.005$ (respectively $0.001$)
    in \eqref{eq:soft-const} and an even smaller step-size $h$ for numerical
    discretisation. In contrast, since $f \equiv 1$ on $\Sigma$, the numerical
    scheme proposed in this paper (as well as other projection-based schemes)
    will give the correct estimation $\mathbf{E}_\mu(f)=1$ up to a small
    numerical error due to the computation of $\Theta^A$.    
  \label{fig-soft-constraint}}
\end{figure}

We now outline the results of this work. The first main result is
Theorem~\ref{thm:Err}, which contains quantitative estimates comparing the
running average computed from the numerical
scheme~\eqref{scheme-non-reversible} and the average with respect to $\mu$,
i.e.\ for an observable $f:\Sigma\rightarrow\R$ we present estimates on the difference
\begin{equation*}
\frac1n \sum\limits_{\l=0}^{n-1}f(x^\lb) - \int_{\Sigma} f(x) d\mu(x),
\end{equation*}
in mean, $L^2$ and almost-sure sense. A key outcome of this result is that the
running average using the scheme \eqref{scheme-non-reversible} indeed converges to the average with respect to $\mu$ in the
limit of small step-size $h\rightarrow 0$ and long-times $T\rightarrow +\infty$. 
The second main result of this paper is Proposition~\ref{prop:impro-ass-var}, which states that 
in the long-time limit the non-reversible scheme~\eqref{scheme-non-reversible} has smaller
asymptotic variance (better sampling efficiency) as compared to its reversible
counterpart (i.e.\ with $A=0$) proposed in~\cite[Eq (1.11)]{Zhang20}.
The aforementioned results concern the scheme~\eqref{scheme-non-reversible} which uses the exact long-time limit
$\Theta^A$ \eqref{def-theta-map-intro}. The third main result in
Theorem~\ref{thm-numerical-theta} discusses the corresponding error estimates
when numerical approximations of $\Theta^A$ are used instead (see the
numerical scheme \eqref{scheme-non-reversible-numerical}).

We now discuss relevant literature. Sampling schemes on
submanifolds have been studied using constrained (i.e.\ by
introducing Lagrange multipliers) overdamped SDEs \cite{projection_diffusion} and constrained Langevin dynamics~\cite{Tony-constrained-langevin2012}.  Higher order schemes on submanifolds have been constructed in the recent work~\cite{rk-schemes-submanifolds}. 
Sampling schemes for the conditional probability measure $\mu$ \eqref{mu1-intro} where constraints are imposed via gradient ODE flows were analysed in~\cite{Zhang20}. 
In the recent work \cite{leimkuhler-constraint-regularization-nn}, the authors applied the framework of constrained Langevin dynamics to the training of deep
neural networks and demonstrated improved training results.
There are also various Markov chain Monte Carlo (MCMC) methods on
submanifolds in the literature~\cite{pmlr-v22-brubaker12,goodman-submanifold,hmc-submanifold-tony,multiple-projection-mcmc-submanifolds}.
In particular, the authors in \cite{goodman-submanifold} constructed a random
walk Monte Carlo method on submanifolds and pointed out the necessity of
``reversibility check'' for sampling on submanifolds. Following
\cite{goodman-submanifold}, MCMC methods on submanifolds with more general
proposals have been proposed in ~\cite{hmc-submanifold-tony,multiple-projection-mcmc-submanifolds}.
These methods have the advantage that they are unbiased and therefore allow the use of large sampling step-sizes.
Applications of MCMC methods on submanifolds in Bayesian inference can be found in \cite{manifold-mcmc-Bayesian}. 

In comparison to the literature, the current work is new in the following aspects.
First, existing algorithms in the aforementioned work except~\cite{Zhang20} aim at sampling
the Boltzmann distribution confined to submanifolds (i.e.\ without the
determinant factor in $\mu$ \eqref{mu1-intro}), and therefore require importance sampling or modification of potential when sampling $\mu$. In contrast, similar to~\cite{Zhang20}, the algorithm proposed in the current work directly samples
$\mu$ and is therefore expected to be more applicable in molecular dynamics
applications, e.g.\ free energy calculation and model reduction.
Second, the algorithm in the current work imposes constraints via ODE flows as
opposed to Lagrange multipliers. Note that even though the map $\Theta^A$ in
the scheme~\eqref{scheme-non-reversible} is defined as the long-time limit of \eqref{phi-map-a-intro}--\eqref{fun-cap-f-intro}, 
in practice one uses a modified ODE flow that converges to $\Theta^A$ in
finite time (see \eqref{phi-map-a-kappa} and Lemma~\ref{lemma-rescaled} for
details). Moreover, when $k$ is medium or large (e.g.\ $k\ge 10$), the total computational cost of the constraint step is comparable or even smaller
than the cost of performing the constraint step via Newton's method (see \cite[Remark 3.9 and Example 2]{Zhang20}). 
In contrast to the algorithms in \cite{multiple-projection-mcmc-submanifolds} which are built on numerical methods for computing multiple projections, the
numerical scheme \eqref{scheme-non-reversible} only uses the gradients of $\xi$ and is relatively simpler to implement. 
Consequently, we indeed expect that our algorithm scales well
for high-dimensional applications in molecular dynamics and machine
learning~\cite{leimkuhler-constraint-regularization-nn}, where $\xi$ is non-trivial and $k$ is usually large. 
Third, by applying analytical tools in~\cite{DuncanLelievrePavliotis16},
we extend previous results for non-reversible dynamics on $\mathbb{R}^d$ to numerical schemes on submanifolds.
To the best of our knowledge, sampling algorithms on submanifolds using non-reversible dynamics have not been considered before.

The remainder of this article is organised as follows. In
Section~\ref{sec-main-results} we introduce notations, assumptions, and state the main results of this paper. 
Section~\ref{sec-prepare} summarises preliminary properties of useful quantities.
Section~\ref{sec-ergodic-sde} studies the properties of a non-reversible diffusion process which plays a crucial role in the analysis of the numerical scheme. 
In Section~\ref{sec-map-theta} we analyse the ODE~\eqref{phi-map-a-intro} and the projection~\eqref{def-theta-map-intro}. 
Section~\ref{sec-compare} is devoted to the comparison between the non-reversible and reversible setting. 
In Section~\ref{sec-numerical} we study an illustrative example.
We conclude with discussions on various issues in Section~\ref{sec:discuss}.
Appendix~\ref{app:lemma-identity-proof} proves a
technical lemma which is used in the proofs of the main results. In
Appendix~\ref{app:thm-err} we present the proofs of Theorem~\ref{thm:Err}, Corollary~\ref{corollary-on-spectral-gap} and Theorem~\ref{thm-numerical-theta}.

\section{Notations, assumptions and results}
\label{sec-main-results}

In what follows we first present notations and the central assumptions
(Section~\ref{secsub-notations}). We then present crucial auxiliary results in
Section~\ref{subsec-summary-of-useful-results}, which are used to obtain the main results of this paper in Section~\ref{subsec-summary-of-main-results}.

\subsection{Notations and assumptions}
\label{secsub-notations}

We use $\mathbb N^+$ for positive integers. Given two
subsets $\Omega, \Omega'$ of Euclidean spaces and $r\in \mathbb N^+$, 
$C^r(\Omega, \Omega')$ is the space of all $C^r$-differentiable functions from
$\Omega$ to $\Omega'$. When $\Omega'=\mathbb R$, we use $C^r(\Omega):=C^r(\Omega,\mathbb R)$. 
The set $C_b(\Omega)$ is the space of bounded continuous functions from $\Omega$ to $\mathbb{R}$.
For all $C^1$-differentiable functions $f: \mathbb{R}^d\rightarrow \mathbb{R}^{d'}$ and $x \in \mathbb{R}^d$, 
$\nabla f(x)$ denotes the $d\times d'$ matrix whose entries are $(\nabla
f)_{ij}(x)=\frac{\partial f_j(x)}{\partial x_i}$, for $1 \le i \le d$ and $1 \le j \le d'$. 
The relation $A\succeq B$ is the Loewner ordering for square matrices $A,B$, i.e.\  $A-B$ is positive semi-definite. The same notation will be used for the Loewner ordering between operators.
The matrix $I_m\in \mathbb{R}^{m\times m}$ is the identity matrix of order
$m\in \mathbb N^+$. We make the following assumptions throughout this paper.

\begin{assump}  \label{assump-1}
The matrix-valued function $\sigma:\R^d\rightarrow\R^{d\times d_1}$ is $C^\infty$-smooth, where the integer $d_1\ge d$, such that
  $a:=\sigma\sigma^T:\R^d\rightarrow\R^{d\times d}$ is uniformly positive
  definite on $\mathbb{R}^d$, i.e.\   
  \begin{equation*}
    v^T a(x) v \ge c_0 |v|^2, \quad \forall~ x, v \in
    \mathbb{R}^d ~ \,,
  \end{equation*}
  for some constant $c_0>0$. The potential $U:\R^d\rightarrow\R$ is $C^2$-smooth. The constant matrix $A\in \mathbb{R}^{d\times d}$ is a constant skew-symmetric matrix, i.e.\  $A^T=-A$.
\end{assump}
\begin{assump}  \label{assump-2}
  The function $\xi: \mathbb{R}^d \rightarrow \mathbb{R}^k$ is $C^4$-smooth.
  The zero level set $\Sigma$ \eqref{levelset-sigma-intro} is both connected and compact. Furthermore, 
  $\rank(\nabla\xi) = k$ for all $x \in \Sigma$, where $\nabla\xi:\R^d\rightarrow\R^{d\times k}$.  
\end{assump}

\begin{remark} \label{rmk-on-non-constant-A}
Throughout this paper we assume that $A$ is a constant skew-symmetric matrix. While the numerical scheme and the corresponding results can be generalised to non-constant skew-symmetric matrices $A:\R^d\rightarrow\R^{d\times d}$ (see Remark~\ref{rem:genA} for details), we work with constant $A$ for the ease of presentation.    
\end{remark}

\begin{remark}
Due to Assumption~\ref{assump-2}, there exists $\delta > 0$ such that $\rank(\nabla\xi) = k$ 
  in a neighbhourhood $\Sigma^{(\delta)}$ of $\Sigma$, defined by
  \begin{equation}
    \Sigma^{(\delta)} := \bigcup_{|z| < \delta} \Sigma_z\,,\quad \mbox{where}
    ~ \Sigma_z := \{x\in \mathbb{R}^d\,|\,\xi(x) = z\}\,,\,  z\in\R^k\,. 
    \label{sigma-delta}
  \end{equation}
 This implies that $\nabla\xi^T\nabla\xi\in \mathbb{R}^{k\times k}$ is
  positive definite. Without any loss of generality, we assume that $\nabla\xi^T\nabla\xi \succeq c_1 I_k$ on $\Sigma^{(\delta)}$
  for some $c_1>0$. 
  \label{rmk-on-set-sigma-delta}
\end{remark}

In the following, we introduce several quantities which will be useful in later analysis.
For notational simplicity, we will often omit the argument (typically $x$) of
a function if it is clear from the context or if it does not add any ambiguity.

We denote by 
\begin{equation*}
  L^2(\Sigma, \cmu) = \Big\{g\,\Big|\,g: \Sigma\rightarrow \mathbb{R}, \int_{\Sigma}
  g^2\,d\cmu < +\infty\Big\}
\end{equation*}
the Hilbert space endowed with the $\cmu$-weighted inner product
\begin{equation}  \label{weighted-ip}
  ( g_1, g_2 )_{\cmu} := \int_{\Sigma} g_1\,g_2\,d\cmu\,, 
  \ \ \forall~ g_1, g_2\in L^2(\Sigma, \cmu)\,.
\end{equation}
For an operator $\mathcal{T}$ with domain 
$D(\mathcal{T}) \subseteq L^2(\Sigma, \cmu)$, we
denote by $\mathcal{T}^*$ its adjoint with respect to \eqref{weighted-ip}. 

We define the matrix-valued functions $\Phi: \mathbb{R}^d\rightarrow \R^{k\times k}$ and
$\Gamma: \mathbb{R}^d\rightarrow \R^{d\times d}$ as  
\begin{equation}
  \Phi := \nabla\xi^T(a-A)\nabla\xi\,, \quad   \Gamma := (a-A)\nabla\xi\nabla\xi^T\,.
  \label{phi-gamma}
\end{equation}
Assumption~\ref{assump-2} implies that $\Phi$ is invertible in the
neighbourhood $\Sigma^{(\delta)}$ (see Remark~\ref{rmk-on-set-sigma-delta} and
the first item of Lemma~\ref{lemma-on-matrix-pi-phi}).
Moreover, we define $P,B:\Sigma^{(\delta)} \rightarrow\R^{d\times d}$ as
\begin{equation}  \label{projection-p}
  P := I_d - (a-A)\nabla\xi  \Phi^{-1}\nabla\xi^T\,, \quad B:= P(a-A).
\end{equation}
Next we introduce the symmetric and antisymmetric parts of $B$
  \begin{equation}
    \label{symmetric-parts-of-b}
     B^{\mathrm{sym}} := \frac12(B+B^T)\,,  \quad B^{\mathrm{asym}} :=
    \frac12(B-B^T)\,, 
  \end{equation}
  and the vector $J=(J_1, J_2, \dots, J_d)^T: \Sigma^{(\delta)} \rightarrow\R^{d}$
  with  \begin{equation}
     J_i := \frac{\mathrm{e}^{\beta U}}{\beta} \sum_{j=1}^d \frac{\partial
      (B^{\mathrm{asym}}_{ij}\mathrm{e}^{-\beta U})}{\partial x_j} = 
      \frac{1}{\beta} \sum_{j=1}^d\frac{\partial B^{\mathrm{asym}}_{ij}}{\partial x_j} - 
    \big[B^{\mathrm{asym}} \nabla U\big]_{i}
\,, \quad 1 \le i \le d\,.
      \label{vector-j}
  \end{equation}

We will also compare the non-reversible numerical scheme
\eqref{scheme-non-reversible} with $A\neq 0$ to the corresponding reversible counterpart with $A=0$,  for which we introduce the following quantities from~\cite{Tony-constrained-langevin2012,Zhang20} 
\begin{equation}  \label{p-b-reversible}
  P_0 := I_d - a\nabla\xi (\nabla\xi^Ta \nabla\xi)^{-1}\nabla\xi^T\,, \quad
  B_0:= P_0a\,, \quad \forall\, x \in \Sigma^{(\delta)}.
\end{equation}
Note that \eqref{p-b-reversible} corresponds to the reversible case (i.e.\  \eqref{projection-p} with $A=0$), for which  $B_0^T=B_0$.

\begin{remark}[$P$ is a projection]
  As we will show in Lemma~\ref{lemma-p-b}, $P=P(x)$ defines a
  projection onto the tangent space $T_x\Sigma_z$ of the level set $\Sigma_z
  := \{x'\in \mathbb{R}^d\,|\,\xi(x') = z\}$, where 
  $x \in \Sigma^{(\delta)}$ and $z=\xi(x) \in \mathbb{R}^k$. Since $P\neq P^T$, in general $P$ is an oblique
  (non-orthogonal) projection. In the reversible setting, i.e.\ $A=0$,  $P_0$
  in \eqref{p-b-reversible} is the orthogonal projection with respect to the inner product weighted by the matrix $a$~\cite{Zhang20}. 
  \label{rmk-p-projection}
\end{remark}

\subsection{Auxiliary results}\label{subsec-summary-of-useful-results}

The main result of this paper (see Theorem~\ref{thm:Err} below) is concerned with the computation of averages with respect to $\mu$. 
As in~\cite[Theorem~3.5]{Zhang20}, the proof of Theorem~\ref{thm:Err}
is based on the Poisson-equation approach developed in~\cite{MattinglyStuartTretyakov10}. In
the following, we discuss two ingredients that are necessary to prove Theorem~\ref{thm:Err}.

The first ingredient is the following stochastic differential equation (SDE) on $\R^d$
    \begin{equation}   \label{sde-on-sigma}
      dX_{s}^i = -\sum_{j=1}^d B_{ij} \frac{\partial U}{\partial x_j}\,ds +
      \frac{1}{\beta} \sum_{j=1}^d \frac{\partial B_{ij}}{\partial x_j}\,ds +
      \sqrt{2\beta^{-1}} \sum_{j=1}^{d_1} (P\sigma)_{ij}\,dW_{s}^j\,, \quad 1\le i \le d\,,
  \end{equation}
  for $s\ge 0$ and $X_0\in \Sigma$. 
%  and $W_s=(W_s^1, W_s^2, \dots,
%  W_s^{d_1})^T$ is a $d_1$-dimensional Brownian motion.
  Note that in \eqref{sde-on-sigma} we have omitted the dependence of the coefficients on the 
  state $X_s$ for notational convenience. 
 We have the following two results concerning \eqref{sde-on-sigma} as well as
 its infinitesimal generator $\mathcal{L}$. The proofs are presented in Section~\ref{sec-ergodic-sde}.
  \begin{prop}\label{prop-generator-l}
    The infinitesimal generator $\mathcal{L}$ of the SDE~\eqref{sde-on-sigma} satisfies:
    \begin{enumerate}[label=(\arabic*)]
      \item \label{it:prop-on-l-1}
For any $f\in C^2(\R^d)$,
\begin{equation}  \label{generator-l}
  \mathcal{L}f 
    = \frac{\mathrm{e}^{\beta U}}{\beta} \sum_{i,j=1}^d\frac{\partial}{\partial
    x_j}\left(B_{ij} \mathrm{e}^{-\beta U} \frac{\partial f}{\partial x_i} \right) =
    \frac{\mathrm{e}^{\beta U}}{\beta} \nabla\cdot \left( \mathrm{e}^{-\beta U} B^T \nabla f
    \right)\,.
\end{equation}
      \item \label{it:prop-on-l-2}
	It admits the decomposition
    \begin{equation}  \label{generator-l-decomp}
    \mathcal{L}f 
     =  (\mathcal{A} + \mathcal{S}) f 
     = J\cdot \nabla f + \frac{\mathrm{e}^{\beta
    U}}{\beta} \nabla\cdot \big(\mathrm{e}^{-\beta U}\,B^{\mathrm{sym}} \nabla f\big)\,,
\end{equation}
  where the vector $J=(J_1, J_2, \dots, J_d)^T$ (defined in \eqref{vector-j}) satisfies 
    \begin{equation}	 \label{conditions-met-by-j}
      PJ = J \, ~\mbox{and}\quad \nabla\cdot(J\,\mathrm{e}^{-\beta U}) =
	 0\,, \quad \mbox{on}~\,\Sigma^{(\delta)}\,,
    \end{equation}
and
    \begin{equation}      \label{sym-antisym-of-l}
      \mathcal{A}:=J\cdot \nabla\,, \quad
  \mathcal{S}:=\frac{\mathrm{e}^{\beta U}}{\beta} 
    \nabla\cdot \big(\mathrm{e}^{-\beta U}\,B^{\mathrm{sym}} \nabla \big),
    \end{equation}
    are the non-reversible and reversible parts of $\mathcal{L}$ respectively .
  \item\label{it:prop-on-l-3}
    We have the integration by parts formula 
    \begin{equation}
      \label{integration-by-parts}
      \forall\,f,g\in C^2(\R^d): \ \int_{\Sigma} (\mathcal{L}f)\,g\,d\cmu = -\frac{1}{\beta} \int_{\Sigma} B^T\nabla
      f\cdot\nabla g\,d\cmu.
    \end{equation}
    In particular, for any $f\in C^2(\R^d)$, 
    \begin{equation}
      \label{integration-by-parts-f}
      \int_{\Sigma} (\mathcal{L}f)f\,d\cmu = \int_{\Sigma} (\mathcal{S}f)f\,d\cmu 
      = -\frac{1}{\beta} \int_{\Sigma} B^{\mathrm{sym}}\nabla f\cdot\,\nabla f\,d\cmu  \,.
    \end{equation}
    \item\label{it:prop-on-l-4} For $g\in C^2(\Sigma)$, the adjoints of $\mathcal A$ and $\mathcal L$ in $L^2(\Sigma,\mu)$ satisfy
    \begin{equation*}
    \mathcal{A}^*=-\mathcal{A}, \ \  
      \mathcal{L}^*g=(-\mathcal{A} + \mathcal{S})g 
  =   \frac{\mathrm{e}^{\beta U}}{\beta} \nabla\cdot \left( \mathrm{e}^{-\beta
      U} B \nabla g \right).
    \end{equation*}
    \end{enumerate}
  \end{prop}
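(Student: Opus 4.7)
The proof splits into the four items. Throughout, two structural identities — $P(a-A)\nabla\xi = 0$ and $\nabla\xi^T P = 0$, both direct from the definitions of $P$ and $\Phi$ via $\Phi^{-1}\Phi = I$ — drive the algebra. For item~\ref{it:prop-on-l-1}, I would apply It\^o's formula to \eqref{sde-on-sigma} to obtain $\mathcal L f = b\cdot\nabla f + \frac{1}{\beta}\operatorname{tr}(PaP^T\nabla^2 f)$ with drift $b_i = -\sum_j B_{ij}\partial_j U + \frac{1}{\beta}\sum_j\partial_j B_{ij}$. Expanding the claimed divergence form reproduces the same drift and gives a second-order term $\frac{1}{\beta}\sum_{ij}B_{ij}\partial^2_{ij}f = \frac{1}{\beta}\sum_{ij}B^{\mathrm{sym}}_{ij}\partial^2_{ij}f$ by symmetry of the Hessian, so everything reduces to the identity $B^{\mathrm{sym}} = PaP^T$. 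Writing $M = a-A$, I would decompose $B = PM = PMP^T + PM\nabla\xi\,\Phi^{-T}\nabla\xi^T M^T$ and observe $PM\nabla\xi = 0$, so $B = PMP^T$; symmetrising gives $B^{\mathrm{sym}} = \tfrac12 P(M+M^T)P^T = PaP^T$ since $M+M^T = 2a$.

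For item~\ref{it:prop-on-l-2}, decompose $B^T = B^{\mathrm{sym}} - B^{\mathrm{asym}}$ inside \eqref{generator-l}: the $B^{\mathrm{sym}}$ piece is exactly $\mathcal Sf$; in the $B^{\mathrm{asym}}$ piece the second-derivative term vanishes by pairing an antisymmetric $B^{\mathrm{asym}}$ with the symmetric Hessian, and the remaining first-derivative terms collapse to $J\cdot\nabla f$ by the definition \eqref{vector-j}. For $PJ = J$, which is equivalent to $\nabla\xi^T J = 0$ since $I-P$ has range in the columns of $(a-A)\nabla\xi$, I would use $B^{\mathrm{sym}}\nabla\xi = PaP^T\nabla\xi = 0$ (from $P^T\nabla\xi = 0$) together with $B\nabla\xi = 0$ to get $B^{\mathrm{asym}}\nabla\xi = 0$, hence $\nabla\xi^T B^{\mathrm{asym}} = 0$ by transposition; differentiating this identity in $x_j$ yields
\begin{equation*}
\textstyle\sum_{ij}\partial_i\xi_\alpha\,\partial_j B^{\mathrm{asym}}_{ij} = -\sum_{ij}\partial^2_{ij}\xi_\alpha\,B^{\mathrm{asym}}_{ij} = 0
\end{equation*}
by the same symmetric/antisymmetric cancellation. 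The identity $\nabla\cdot(Je^{-\beta U}) = 0$ is immediate from the first form of \eqref{vector-j}: it equals $\frac{1}{\beta}\sum_{ij}\partial_i\partial_j(B^{\mathrm{asym}}_{ij}e^{-\beta U})$, zero by the same mechanism applied to mixed partials.

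Item~\ref{it:prop-on-l-3} is the main obstacle, since $\mathcal L$ is written via the Euclidean divergence while $d\mu = Z^{-1}e^{-\beta U}[\det(\nabla\xi^T\nabla\xi)]^{-1/2}d\nu_\Sigma$ is a weighted surface measure. The crucial structural fact is $B^T\nabla\xi = 0$ (transpose of $\nabla\xi^T B = 0$), which makes $B^T\nabla f$ tangent to $\Sigma$, so a surface IBP produces no boundary/normal contribution. I would prove \eqref{integration-by-parts} by a co-area regularisation: on the tubular neighbourhood $\Sigma^{(\delta)}$, introduce a mollifier $\chi_\eps(F)$ with $F = \tfrac12|\xi|^2$, perform standard $\R^d$-integration by parts on $\int g\,\mathcal Lf\,\chi_\eps(F)\,e^{-\beta U}\,[\det(\nabla\xi^T\nabla\xi)]^{-1/2}\,dx$, and let $\eps\to 0$ using the co-area formula to recover the surface integral. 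The cross-terms $\chi'_\eps(F)\,\nabla F\cdot B^T\nabla f$ vanish identically because $\nabla F = \nabla\xi\,\xi$ is normal to $\Sigma$ while $B^T\nabla f$ is tangent; this is the clean place where tangency matters. The specialisation $f=g$ follows by substituting and replacing $B^T$ with $B^{\mathrm{sym}}$ (the antisymmetric part pairs to zero against $\nabla f\otimes\nabla f$), together with $\int(\mathcal Af)f\,d\mu = \tfrac12\int J\cdot\nabla(f^2)\,d\mu = 0$, itself a consequence of $PJ = J$, $\nabla\cdot(Je^{-\beta U}) = 0$, and the surface IBP. Item~\ref{it:prop-on-l-4} is then a corollary: mirroring the algebra of item~\ref{it:prop-on-l-1} with $B$ in place of $B^T$ in \eqref{integration-by-parts} (after swapping $f$ and $g$) yields $\mathcal L^* g = \frac{e^{\beta U}}{\beta}\nabla\cdot(e^{-\beta U}B\nabla g)$, and $\mathcal A^* = -\mathcal A$ follows by polarising $\int(\mathcal Af)f\,d\mu = 0$.
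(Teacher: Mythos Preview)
Your approach to all four items mirrors the paper's closely; in particular, the strategy for item~\ref{it:prop-on-l-3} (pass to a bulk regularisation, integrate by parts on $\R^d$, and use $B\nabla\xi=0$ to kill the cross term) is exactly what the paper does. Your proof of $B^{\mathrm{sym}}=PaP^T$ in item~\ref{it:prop-on-l-1} via $B=P(a-A)P^T$ is in fact more direct than the paper's, which obtains the same identity in a preparatory lemma through the auxiliary matrices $V$ and $\Pi=V^T(a-A)^{-1}V$.

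There is, however, a concrete slip in your regularisation for item~\ref{it:prop-on-l-3}. In the bulk integral
\[
\int g\,(\mathcal L f)\,\chi_\eps(F)\,e^{-\beta U}\,[\det(\nabla\xi^T\nabla\xi)]^{-1/2}\,dx
\]
you have inserted the determinant factor by hand. When you integrate by parts, the gradient lands not only on $g$ and on $\chi_\eps(F)$ but also on $[\det(\nabla\xi^T\nabla\xi)]^{-1/2}$, producing a term proportional to $B^T\nabla f\cdot\nabla[\det(\nabla\xi^T\nabla\xi)]^{-1/2}$. This vector is tangential and generically nonzero, so it survives in the $\eps\to 0$ limit and spoils the identity. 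The paper sidesteps this by using the approximating measure $d\mu^\eps \propto e^{-\beta(U+F/\eps)}\,dx$ of \eqref{mu-eps} and Lemma~\ref{lem:mueps-conv}: its density contains \emph{no} determinant factor, so after IBP the only extra contribution comes from $\nabla e^{-\beta F/\eps}=-\tfrac{\beta}{\eps}\,\nabla\xi\,\xi\,e^{-\beta F/\eps}$, and that is annihilated by $B\nabla\xi=0$. The factor $[\det(\nabla\xi^T\nabla\xi)]^{-1/2}$ then appears only at the very end, supplied automatically by the co-area formula when the limit is converted back to a surface integral on $\Sigma$. The fix to your argument is simply to drop the explicit determinant from the regularised integral and let co-area produce it.
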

\begin{prop}\label{prop:inv-meas}
 The SDE \eqref{sde-on-sigma} can be written as 
    \begin{equation}
    dX_{s}^i =  \bigg(J_i- \sum_{j=1}^d B^{\mathrm{sym}}_{ij}\frac{\partial U}{\partial x_j}+
      \frac{1}{\beta} \sum_{j=1}^d\frac{\partial
      B^{\mathrm{sym}}_{ij}}{\partial x_j}\bigg)\,ds +
      \sqrt{2\beta^{-1}} \sum_{j=1}^{d_1} (P\sigma)_{ij}\,dW_{s}^j\,,\quad  1\le i \le d\,.
    \label{sde-on-sigma-decomp}
  \end{equation}
   Assume that $X_0\in \Sigma$. Then, we have $X_s\in \Sigma$ almost surely for $s \ge 0$. Moreover, $X_s$ is ergodic with respect to the unique invariant
  probability distribution $\cmu$ in \eqref{mu1-intro}. 
\end{prop}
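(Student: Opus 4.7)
My plan has three parts, corresponding to the three claims. First, the identity~\eqref{sde-on-sigma-decomp} is a direct algebraic rewriting of~\eqref{sde-on-sigma}: splitting $B = B^{\mathrm{sym}} + B^{\mathrm{asym}}$ in the drift and regrouping the antisymmetric pieces according to the definition~\eqref{vector-j} of $J$ yields the claim with no further work.

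Second, to prove $X_s \in \Sigma$ almost surely I would apply It\^o's formula to $\xi_\alpha(X_s)$ for each $1 \le \alpha \le k$ and show the result is identically zero. The key algebraic observation is that $P$ satisfies $\nabla\xi^T P = 0$, which follows immediately from the definition~\eqref{projection-p} of $P$ together with $\Phi = \nabla\xi^T(a-A)\nabla\xi$. This single identity yields both halves at once: the martingale part of $d\xi_\alpha(X_s)$ is proportional to $(\nabla\xi_\alpha)^T P\sigma$, which vanishes, and since $B^T = (a+A)P^T$ annihilates $\nabla\xi_\alpha$, the representation~\eqref{generator-l} gives $\mathcal{L}\xi_\alpha = 0$, killing the bounded-variation part. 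Hence $\xi_\alpha(X_s) \equiv \xi_\alpha(X_0) = 0$, so $X_s$ stays on $\Sigma$.

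Third, for ergodicity I would proceed in two stages. Invariance of $\mu$ follows from~\eqref{integration-by-parts} in Proposition~\ref{prop-generator-l}\ref{it:prop-on-l-3} by choosing $g \equiv 1$: this gives $\int_\Sigma \mathcal{L}f\,d\mu = 0$ for every $f \in C^2(\R^d)$, which is exactly $\mathcal{L}^*\mu = 0$. For uniqueness I would argue that, since $\sigma$ has full rank $d$ by Assumption~\ref{assump-1} and $P(x)$ surjects onto $T_x\Sigma$ for $x \in \Sigma$ (Remark~\ref{rmk-p-projection}), the range of $P(x)\sigma(x)$ is all of $T_x\Sigma$, so $\mathcal{L}$ restricted to $C^2(\Sigma)$ is uniformly elliptic on $\Sigma$ in the tangential directions. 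Combined with compactness and connectedness of $\Sigma$ (Assumption~\ref{assump-2}) and smoothness of the coefficients, this makes the Markov semigroup on $\Sigma$ strong Feller and irreducible, so standard results (cf.\ \cite[Proposition~2.6]{Zhang20} for the reversible analogue with $A=0$) give uniqueness of the invariant measure and the stated ergodicity.

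I expect the hard part to be the ergodicity step rather than the two algebraic items. One mild subtlety is that $P$, $B$, $B^{\mathrm{sym}}$ etc.\ are a priori only defined on the tubular neighbourhood $\Sigma^{(\delta)}$, so to make sense of~\eqref{sde-on-sigma} globally one needs either a smooth extension of these coefficients to $\R^d$ (whose choice is irrelevant once the invariance of $\Sigma$ established in the second step is invoked) or a formulation intrinsic to the manifold. Once well-posedness on $\Sigma$ is in place, transferring the classical ellipticity-based ergodicity theory to the compact manifold $\Sigma$ is essentially routine, and the non-reversibility (the presence of $A$) plays no role here beyond what is already encoded in $B$.
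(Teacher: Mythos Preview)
Your proposal is correct and follows essentially the same route as the paper: the rewriting~\eqref{sde-on-sigma-decomp} is obtained exactly by the $B=B^{\mathrm{sym}}+B^{\mathrm{asym}}$ split you describe, the invariance of $\Sigma$ is proved via It\^o's formula using $\nabla\xi^T P=0$ and hence $\mathcal{L}\xi=0$, and invariance of $\mu$ comes from~\eqref{integration-by-parts} with $g\equiv 1$, after which ergodicity is deduced from ellipticity on the compact connected manifold with a reference to the literature. The only cosmetic difference is that the paper reads off $\nabla\xi^T P=0$ from the alternative expression~\eqref{matrix-p-b} rather than directly from~\eqref{projection-p}, and it is terser than you on the ellipticity step; your additional remarks on the range of $P\sigma$ and on the $\Sigma^{(\delta)}$ extension issue are consistent with the paper's Remark~\ref{rmk-on-sde-and-l}.
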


\begin{remark}  \label{rmk-on-sde-and-l}
  We make two remarks regarding the results above.
  \begin{enumerate}[label=(\arabic*),itemsep=0pt]
    \item
Note that the matrix $B$ in \eqref{projection-p} satisfies $B\nabla\xi=B^T\nabla \xi = 0$ which implies $\nabla\xi^T (B^T\nabla f)=0$. Therefore $B^T\nabla f$ on $\Sigma$ can be intrinsically defined using values of $f$ on $\Sigma$. Consequently, when evaluated on $\Sigma$, the right hand sides of~\eqref{generator-l} and~\eqref{integration-by-parts} in fact only
depend on the values of $f,g$ on $\Sigma$. This implies that $\mathcal L$ defines an operator on $C^2(\Sigma)$, which is independent of the choice of extension to $C^2(\R^d)$. 
Similarly, while the coefficients
      of~\eqref{sde-on-sigma} are not well-defined on $\mathbb{R}^d$, but only
      $\Sigma^{(\delta)} \subseteq \mathbb{R}^d$, Proposition~\ref{prop:inv-meas} ensures that \eqref{sde-on-sigma} only evolves on the level set $\Sigma$.
\item
In general, the SDE~\eqref{sde-on-sigma} defines a non-reversible process on
      $\Sigma$. Equation~\eqref{sde-on-sigma-decomp} together with the decomposition~\eqref{generator-l-decomp} show that we can decompose SDE~\eqref{sde-on-sigma} as well as its generator $\mathcal{L}$ into reversible and non-reversible parts, which is similar to the case of related SDEs on $\mathbb{R}^d$~\cite{lelievre2013optimal,DuncanLelievrePavliotis16,ZhangHartmannSchutte16}.
  \end{enumerate}
\end{remark}

Given $f$, when applying the approach in~\cite{MattinglyStuartTretyakov10}, we will
consider the Poisson equation
\begin{equation}
  \mathcal L \psi = f-\bar f\,,\quad \mbox{on}~ \Sigma\quad \textrm{with}~ \E_{\cmu}[\psi] = 0\,,
  \label{eqn-poisson-equation}
\end{equation}
 where $\mathcal L$ is defined in \eqref{generator-l} and $\bar f = \E_{\cmu}[f]$.  
Since $\Sigma$ is a compact connected submanifold, one can easily verify that
the Foster-Lyapunov Criterion~\cite{glynn1996} holds for $\mathcal{L}$, 
which guarantees that the solution to \eqref{eqn-poisson-equation} is unique~\cite[Section 2]{DuncanLelievrePavliotis16}.
 We define the asymptotic variance (see \cite{DuncanLelievrePavliotis16} for the definition of asymptotic variance on $\mathbb{R}^d$)
\begin{equation}
  \chi_{f}^2 = \frac{2}{\beta}\int_{\Sigma} (B^{\textrm{sym}}\nabla\psi)\cdot \nabla\psi\,d\cmu\,,
  \label{eqn-chi}
\end{equation}
where $\psi$ is the solution to \eqref{eqn-poisson-equation}.

The second ingredient is the map $\Theta^A$ \eqref{def-theta-map-intro} and we
need the following properties on its first and second derivatives (for states on $\Sigma$). The proofs, based on analysing the ODE \eqref{phi-map-a-intro}--\eqref{fun-cap-f-intro},  are provided in Section~\ref{sec-map-theta}.

\begin{prop}\label{prop-1st-varphi}
  For $x \in \Sigma$ and $s\geq 0$, the gradient of the ODE flow \eqref{phi-map-a-intro}--\eqref{fun-cap-f-intro}  satisfies 
\begin{equation}
\begin{aligned}
  (\nabla\varphi^A(x,s))^T &= \mathrm{e}^{-s\Gamma} \\
  &= P+ (a-A)\nabla\xi \mathrm{e}^{-s\Phi}
  \Phi^{-1}\nabla\xi^T\,\\
  &=  I_d + (a-A)\nabla\xi (\mathrm{e}^{-s\Phi} - I_k)\Phi^{-1}\nabla\xi^T,
\end{aligned}
  \label{eqn-1st-varphi}
\end{equation}
where $\Phi, \Gamma$ are defined in \eqref{phi-gamma}. 
Furthermore the limiting flow $\Theta^A$ satisfies 
\begin{equation}
  (\nabla\Theta^A)^T(x)= P.
  \label{eqn-of-theta-map}
\end{equation}
\end{prop}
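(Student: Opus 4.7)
The plan is to exploit the fact that $\nabla F(x)=(\nabla\xi\cdot\xi)(x)$ vanishes identically on $\Sigma$, so the right hand side of the ODE~\eqref{phi-map-a-intro} vanishes at every $x\in\Sigma$. Consequently, for $x\in\Sigma$, the trajectory is stationary, $\varphi^A(x,s)=x$ for all $s\ge 0$. This trivially enables linearisation: the variational equation for $J(s):=\nabla_x\varphi^A(x,s)^T$ is
\begin{equation*}
  \frac{d J(s)}{ds}= \nabla g\bigl(\varphi^A(x,s)\bigr)\,J(s),\qquad J(0)=I_d,
\end{equation*}
where $g(y):=-(a-A)(y)\nabla F(y)$, and since $\varphi^A(x,s)\equiv x\in\Sigma$, this reduces to the constant-coefficient linear system $J'=\nabla g(x)\,J$.

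The next step is to compute $\nabla g$ at a point of $\Sigma$. Writing $g_i=-\sum_{k,\alpha}(a-A)_{ik}\,(\partial_k\xi_\alpha)\,\xi_\alpha$ and differentiating with respect to $x_j$, every term carrying a leftover factor of $\xi_\alpha$ dies on $\Sigma$, leaving only the contribution from differentiating $\xi_\alpha$ itself:
\begin{equation*}
  (\nabla g)_{ij}(x)=-\sum_{k,\alpha}(a-A)_{ik}(\partial_k\xi_\alpha)(\partial_j\xi_\alpha)=-\bigl[(a-A)\nabla\xi\,\nabla\xi^T\bigr]_{ij}=-\Gamma_{ij}(x),
\end{equation*}
so $J(s)=\mathrm{e}^{-s\Gamma}$, establishing the first equality of~\eqref{eqn-1st-varphi}.

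To obtain the remaining equalities, I would use the factorisation $\Gamma=MN$ with $M=(a-A)\nabla\xi\in\R^{d\times k}$ and $N=\nabla\xi^T\in\R^{k\times d}$, whence $NM=\Phi$. The standard resummation $(MN)^n=M(NM)^{n-1}N$ for $n\ge 1$ together with invertibility of $\Phi$ (Lemma~\ref{lemma-on-matrix-pi-phi}) gives the Sylvester-type identity
\begin{equation*}
  \mathrm{e}^{-s\Gamma}=I_d+M\bigl(\mathrm{e}^{-s\Phi}-I_k\bigr)\Phi^{-1}N,
\end{equation*}
which is the third line of~\eqref{eqn-1st-varphi}. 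Rearranging and recognising $P=I_d-M\Phi^{-1}N$ from~\eqref{projection-p} immediately gives the middle line.

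Finally, for~\eqref{eqn-of-theta-map} I need $\mathrm{e}^{-s\Phi}\to 0$ as $s\to+\infty$, which amounts to showing that every eigenvalue of $\Phi$ has positive real part. The symmetric part of $\Phi$ is $\nabla\xi^T a\,\nabla\xi$, which by Assumption~\ref{assump-1} and Remark~\ref{rmk-on-set-sigma-delta} is bounded below by $c_0 c_1 I_k$; testing any complex eigenpair $(\lambda,v)$ against this bound yields $\mathrm{Re}(\lambda)\ge c_0 c_1>0$. Passing to the limit in the middle line of~\eqref{eqn-1st-varphi} then produces $(\nabla\Theta^A)^T(x)=P(x)$. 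I do not expect substantial obstacles here; the only delicate point is justifying that the pointwise limit $\Theta^A$ is differentiable with derivative equal to the limit of the derivatives, which follows from the exponential contraction of $\mathrm{e}^{-s\Phi}$ uniformly on $\Sigma$ (by compactness, Assumption~\ref{assump-2}) together with the initial triviality $\varphi^A(x,s)=x$ on $\Sigma$.
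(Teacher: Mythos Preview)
Your proof is correct and the overall strategy---linearising the flow at the fixed point $x\in\Sigma$, identifying the Jacobian of the vector field there as $-\Gamma$, and reading off $\mathrm{e}^{-s\Gamma}$---is exactly what the paper does. The one genuine difference is in how you extract the explicit formula for $\mathrm{e}^{-s\Gamma}$. The paper also records the identity $\Gamma^i=M\Phi^{i-1}N$ (with $M=(a-A)\nabla\xi$, $N=\nabla\xi^T$), but rather than summing the power series directly it uses it to determine how $\mathrm{e}^{-s\Gamma}$ acts on the basis $\bigl(V,\,(a-A)\nabla\xi\bigr)$ of $\R^d$, then inverts that change-of-basis matrix explicitly (invoking $\Pi^{-1}$ from Lemma~\ref{lemma-on-matrix-pi-phi}) to arrive at the expression $V\Pi^{-1}V^T(a-A)^{-1}+(a-A)\nabla\xi\,\mathrm{e}^{-s\Phi}\Phi^{-1}\nabla\xi^T$. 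Your Sylvester/push-through resummation is more direct: it bypasses $V$ and $\Pi$ entirely and lands immediately on the third line of~\eqref{eqn-1st-varphi}, from which $P$ drops out via its definition~\eqref{projection-p}. Both routes are equivalent (the paper's $V\Pi^{-1}V^T(a-A)^{-1}$ is just $P$ by Lemma~\ref{lemma-p-b}), but yours is the cleaner one for this proposition; the paper's detour pays off only because the $V$-representation of $P$ is reused elsewhere. Your argument for $\mathrm{Re}\,\lambda>0$ is the content of Lemma~\ref{lemma-on-matrix-pi-phi}\ref{it:Phi-Spec}, and your closing remark on interchanging limit and derivative is handled in the paper by Proposition~\ref{prop:varphi}.
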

\begin{prop}\label{prop2}
  For $x\in \Sigma$ and $1\le i \le d$, the Hessian of $\Theta^A$
  satisfies
\begin{equation}   \label{eqn-2nd-derivative-theta}
  \sum_{j,r=1}^da_{jr} \frac{\partial^2 \Theta_i^A}{\partial x_j\partial x_r} 
    = \sum_{j=1}^d\frac{\partial B_{ij}}{\partial x_{j}} - \sum_{j,\l=1}^d P_{i\l} \frac{\partial a_{\l j}}{\partial x_{j}} \,.
\end{equation}
\end{prop}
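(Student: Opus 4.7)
Set $G(x) := (a(x)-A)\nabla F(x) = (a(x)-A)\nabla\xi(x)\,\xi(x)$ so that the ODE reads $\dot\varphi^A = -G(\varphi^A)$; since $\nabla F$ vanishes on $\Sigma$, we have $\varphi^A(x,s) \equiv x$ for every $s \ge 0$ when $x \in \Sigma$. My plan is to differentiate this ODE twice in $x$, evaluate at $x\in\Sigma$ -- where the resulting variational equations have constant coefficients since $(\nabla G)(\varphi^A) = \Gamma(x)$ and $(\nabla^2 G_i)(\varphi^A) = (\nabla^2 G_i)(x)$ along the flow -- contract with $a_{rq}$, and analyze the long-time limit.

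Carrying this out leads to the linear ODE
\begin{equation*}
\dot{\tilde Z}(s) + \Gamma\,\tilde Z(s) = -\tilde W(s),\qquad \tilde Z(0)=0,
\end{equation*}
for $\tilde Z_i(s) := \sum_{r,q}a_{rq}\,\partial_r\partial_q\varphi_i^A(x,s)$, with $\tilde W_i(s) := \operatorname{tr}\bigl(Y(s)aY(s)^T\,\nabla^2 G_i(x)\bigr)$ and $Y(s) = e^{-s\Gamma}$ by Proposition~\ref{prop-1st-varphi}; the target quantity is $\tilde Z_i(\infty)$. I will then exploit $P\Gamma = \Gamma P = 0$ (Lemma~\ref{lemma-p-b}) to split $\tilde Z = P\tilde Z + (I-P)\tilde Z$. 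The normal component is easiest to read off by differentiating $\xi\circ\Theta^A \equiv 0$ twice in $x$ and contracting with $a_{rq}$, which gives $[\nabla\xi^T\tilde Z(\infty)]_\alpha = -\operatorname{tr}(PaP^T\,\nabla^2\xi_\alpha)$, pinning down $(I-P)\tilde Z(\infty) = (a-A)\nabla\xi\,\Phi^{-1}\,\nabla\xi^T\tilde Z(\infty)$.

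To identify $\tilde Z(\infty)$ with the claimed right-hand side, I would first rewrite the latter using $\sum_m P_{im}(a-A)_{mj} = B_{ij}$ as $R_i = \sum_{j,m}(a-A)_{mj}\,\partial_j P_{im}$; then $\nabla\xi^T P = 0$ together with the product rule yields $[\nabla\xi^T R]_\alpha = -\operatorname{tr}(B\,\nabla^2\xi_\alpha)$, and the algebraic identity $B^{\mathrm{sym}} = PaP^T$ -- which itself follows from the skew-symmetry of $A$ and the relation $\Phi + \Phi^T = 2\,\nabla\xi^T a\nabla\xi$ -- gives $(I-P)R = (I-P)\tilde Z(\infty)$. \textbf{The main obstacle} is to show $PR = P\tilde Z(\infty)$. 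One expression for the latter is $P\tilde Z(\infty) = -\int_0^\infty P\tilde W(s)\,ds$; decomposing $\nabla^2 G_i$ in terms of $\mathcal M := (a-A)\nabla\xi$ and using the cancellation $P\mathcal M = B\nabla\xi = 0$ to kill the $\mathcal M\,\nabla^2\xi$ contribution, this reduces to a contraction of $\partial_m P$ with $N\Gamma^T$, where $N := \int_0^\infty (YaY^T - PaP^T)\,ds$ solves the Lyapunov equation $\Gamma N + N\Gamma^T = a - PaP^T$. Matching this expression with $PR$ then requires reformulating $PR$ via the idempotency $P^2 = P$ and a careful algebraic manipulation; this is the most delicate step of the argument.
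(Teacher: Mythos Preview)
Your overall architecture coincides with the paper's: you use the same splitting $\tilde Z(\infty)=P\tilde Z(\infty)+(I-P)\tilde Z(\infty)$ (the paper writes this as $\mathcal I_1+\mathcal I_2$ via the identity $I_d=P+(a-A)\nabla\xi\,\Phi^{-1}\nabla\xi^T$), and you handle the normal part $(I-P)\tilde Z(\infty)$ exactly as the paper does, by differentiating $\xi\circ\Theta^A\equiv 0$ twice and invoking $PaP^T=B^{\mathrm{sym}}$.

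The substantive difference, and the place where your outline has a genuine gap, is the tangential part $P\tilde Z(\infty)$. You correctly reduce it to an expression involving $N\Gamma^T$ with $N=\int_0^\infty\bigl(e^{-s\Gamma}a\,e^{-s\Gamma^T}-PaP^T\bigr)\,ds$, and you observe the Lyapunov identity $\Gamma N+N\Gamma^T=a-PaP^T$. But this Lyapunov equation only gives the symmetric combination $\Gamma N+(\Gamma N)^T$; it does \emph{not} determine $N\Gamma^T$ (equivalently $\nabla\xi^T N$) on its own, and that is what your contraction needs. Writing $N\Gamma^T=N\nabla\xi\,\nabla\xi^T(a+A)$ shows that the missing ingredient is an explicit formula for $\nabla\xi^T N$. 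The paper supplies exactly this as a separate lemma (Appendix~A):
\[
\nabla\xi^T\!\int_0^\infty e^{-s\Gamma}a\,e^{-s\Gamma^T}\,ds=\tfrac12\,\Phi^{-1}\nabla\xi^T(a-A),
\]
proved by a direct computation exploiting $\nabla\xi^T e^{-s\Gamma}=e^{-s\Phi}\nabla\xi^T$ and the integration identities for $e^{-s\Phi}$. With this in hand one gets $\Gamma N=\tfrac12(I-P)(a-A)$, and your ``delicate step'' then closes via $P(\partial_m\mathcal M)=-(\partial_m P)\mathcal M$ (from $P\mathcal M=0$) together with the product rule $\partial_{j}\bigl[(a-A)\nabla\xi\,\Phi^{-1}\nabla\xi^T(a-A)\bigr]=\partial_j\bigl[(I-P)(a-A)\bigr]$. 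In short: your plan is the paper's plan, but the step you flag as the obstacle is precisely the content of the paper's auxiliary lemma, and your Lyapunov equation alone is not enough to replace it.
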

In the following remark we compare Propositions \ref{prop-generator-l}--\ref{prop:inv-meas} 
and Propositions~\ref{prop-1st-varphi}--\ref{prop2} with their reversible counterparts (i.e.\ $A=0$) in~\cite{Zhang20}.

\begin{remark}[Reversible case]
  \begin{enumerate}[label=(\arabic*)]
    \item
    For $A=0$, Propositions \ref{prop-generator-l}--\ref{prop:inv-meas}
      recover the previous results~\cite[Theorem 2.3 and Remark 2.5]{Zhang20}, where the corresponding ergodic SDEs on $\Sigma$ that sample $\cmu$ were
      constructed. The proofs in~\cite{Zhang20} were involved due
      to lengthy calculations for the expression of the Laplacian operator on
      $\Sigma$ viewed as a Riemannian manifold (see~\cite[Appendix A]{Zhang20}). In
	contrast, in Propositions \ref{prop-generator-l}--\ref{prop:inv-meas}, we generalize the
	results in~\cite{Zhang20} using a much simpler argument, thanks to
	Lemma~\ref{lemma-convergence-to-mu1} which allows us to convert
	integrals on the manifold $\Sigma$ to integrals on $\mathbb{R}^d$ where integration by parts formula can be easily applied.
	Note that Propositions \ref{prop-generator-l}--\ref{prop:inv-meas}
	are accordant with \cite[Corollary $2.6$ and Remark $2.7$]{Zhang20}, where non-reversible
	ergodic SDEs on $\Sigma$ were discussed. 
    \item
      Similar to $\Theta^A$, we denote by $\Theta$ the long-time limit of the ODE flow \eqref{phi-map-a-intro}--\eqref{fun-cap-f-intro} with $A=0$. Note that in this case $B$ is replaced by $B_0=P_0a$ (in \eqref{p-b-reversible}).
Therefore~\eqref{eqn-2nd-derivative-theta} becomes
\begin{equation*}
  \sum_{j,r=1}^d a_{jr}\frac{\partial^2 \Theta_i}{\partial x_j\partial x_r}
  = \sum_{j=1}^d \frac{\partial (P_0a)_{ij}}{\partial x_j}- \sum_{j,\l=1}^d
  (P_0)_{i\l}\frac{\partial a_{\l j}}{\partial x_j}\,, \quad 1 \le i \le d\,,
\end{equation*}
  which recovers the result in~\cite[Proposition $3.4$]{Zhang20}. Essentially,  Propositions~\ref{prop-1st-varphi}--\ref{prop2} generalises the previous result~\cite[Proposition $3.4$]{Zhang20} to the non-reversible setting, by bypassing the calculation
  based on studying the eigenvalues of $\Gamma$~\cite{fatkullin2010,Zhang20},
  which will be complex-valued in the current case ($A\neq 0$). We refer to the proofs of Propositions~\ref{prop-1st-varphi}--\ref{prop2} in Section~\ref{sec-map-theta} for details.
  \end{enumerate}
\end{remark}

Finally we introduce a modified version of the ODE
\eqref{phi-map-a-intro}--\eqref{fun-cap-f-intro}, given by  
  \begin{align} \label{phi-map-a-kappa}
  \begin{split}
    \frac{d\varphi^{A,\kappa}(x,s)}{ds} =& -\frac{1}{2}\Big((a-A)\nabla
    (|\xi|^{2-\kappa}\big)\Big)\big(\varphi^{A,\kappa}(x,s)\big)\\
    =& -\frac{2-\kappa}{2} \Big(|\xi|^{1-\kappa} \sum_{\alpha=1}^k
    \frac{\xi_\alpha}{|\xi|}(a-A)\nabla \xi_\alpha\Big)\big(\varphi^{A,\kappa}(x,s)\big)\,,\quad s \ge 0 \,,\\
    \varphi^{A,\kappa}(x,0) =\,& x, \qquad  \forall~x\in \mathbb{R}^d\,, 
\end{split}
\end{align}
where $\kappa \in [0,1)$. Note that the vector field in
\eqref{phi-map-a-kappa} is differentiable in $\Sigma^{(\delta)}\setminus
\Sigma$ and converges to zero continuously as the states approach $\Sigma$.
In fact, it is the same vector field as in \eqref{phi-map-a-intro} up to a
state-dependent scalar factor, and therefore the solution to \eqref{phi-map-a-kappa} has the same limit $\Theta^A$. 
In particular \eqref{phi-map-a-kappa} reduces to the ODE~\eqref{phi-map-a-intro}--\eqref{fun-cap-f-intro} when $\kappa=0$. 

The ODE \eqref{phi-map-a-kappa} is useful in numerically approximating the
map $\Theta^A$ since it converges to $\Theta^A$ in finite time, as summarised in the following result.
  \begin{lemma}
    Let $\kappa\in (0,1)$ and $\Sigma^{(\delta)}$ be the neighbourhood defined in \eqref{sigma-delta}. For any
    $x \in \Sigma^{(\delta)}$, the solution $\varphi^{A,\kappa}(x,s)$ to
    \eqref{phi-map-a-kappa} converges to $\Theta^A(x)$ within finite time
    \begin{equation}      \label{tim-s-bar}
      \bar{s} := \frac{2^{1+\frac{\kappa}{2}}\delta^\kappa}{\kappa (2-\kappa) c_0c_1},
    \end{equation} 
where $c_0, c_1 > 0$ are the constants in Assumption~\ref{assump-1} and Remark~\ref{rmk-on-set-sigma-delta} respectively. 
    \label{lemma-rescaled}
  \end{lemma}
\begin{comment}
satisfies  
\begin{equation} 
\big|\varphi^{A,\kappa}(x,s) - \Theta^A(x)\big| =\frac{C 2^{\frac{\kappa}{2}}}{\kappa c_0c_1} \Big(|\xi(x)|^\kappa - 2^{-(1+\frac{\kappa}{2})}\kappa (2-\kappa) c_0c_1 s\Big)^{\frac{1}{\kappa}}\,,
  ~ \forall\, s \in [0, s_f], \quad \mbox{where}~ s_f = 
  \frac{2^{1+\frac{\kappa}{2}}|\xi(x)|^\kappa}{\kappa (2-\kappa)
  c_0c_1}\,,
  \label{estimate-distance-to-limit-kappa}
\end{equation}
where $C>0$ is a constant independent of $x$ and $s$, 
\end{comment} 

The proof of Lemma~\ref{lemma-rescaled} is given at the end of Section~\ref{sec-map-theta}.

\subsection{Main results}\label{subsec-summary-of-main-results}

We now state our first main result concerning the error estimates for the scheme \eqref{scheme-non-reversible} which uses the exact projection $\Theta^A$.

\begin{thm}\label{thm:Err}
For any $f\in C^2(\Sigma)$, define $\bar{f}=\E_{\cmu}[f]$ and the running average 
  \begin{equation*}
\hat f_n =\frac1n \sum\limits_{\l=0}^{n-1}f(x^\lb)\,,
\end{equation*}
  where $n\in\mathbb N^+$, $x^{(\l)}\in \Sigma$, $\l=0,1,\dots, n-1$, are computed using the
  numerical scheme~\eqref{scheme-non-reversible} with step-size $h>0$, and $T:=nh$. Then there exists $h_{0} > 0$, such that for any $h\in (0,h_0)$ we have the following estimates. 
\begin{enumerate}[label=(\arabic*)]
\item\label{thm:main-1} There exists a constant $C>0$, independent of both $n$ and $h$, such that 
\begin{equation}\label{eq:MainThm-1}
  \big| \E[\hat f_n] - \bar f\big| \leq C\Big(h+\dfrac1T\Big)\,.
\end{equation}
\item \label{thm:main-2} 
There exists constants $C_1, C_2>0$, independent of both $n$ and $h$, such that 
\begin{equation}
\E\big[ |\hat f_n - \bar f|^2\big] \leq \frac{C_1\chi_f^2}{T} + C_2\Big(h^2 + \frac{h}{T}+\frac{1}{T^2}\Big),
  \label{mean-square-error-thm2}
\end{equation}
  where $C_1$ is any constant larger than one, $C_2$ depends on the choice of $C_1$, and $\chi_f^2$ is the asymptotic variance \eqref{eqn-chi}.

\item \label{thm:main-3} For any $\eps\in(0,\frac12)$, there exists 
  a constant $C>0$, independent of both $n$ and $h$, and an almost surely bounded positive random variable $\zeta = \zeta(\omega)$, such that 
\begin{equation*}
\big|\hat f_n - \bar f\big| \leq Ch+\frac{\zeta}{T^{\frac12-\eps}}\,,  \ \text{almost surely}
\end{equation*}
for sufficiently large $n$.
\end{enumerate}
\end{thm}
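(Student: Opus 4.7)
The approach follows the Poisson-equation strategy of Mattingly--Stuart--Tretyakov~\cite{MattinglyStuartTretyakov10}, already used in~\cite{Zhang20} for the reversible case $A=0$. Given $f\in C^2(\Sigma)$ with $\bar f=\E_\cmu[f]$, let $\psi$ be the unique mean-zero solution of the Poisson equation~\eqref{eqn-poisson-equation}; existence, uniqueness, and the Schauder regularity needed below follow from the Foster--Lyapunov property of $\mathcal L$ recorded in Section~\ref{subsec-summary-of-useful-results} together with ellipticity of $\mathcal L$ on $\Sigma$ (since $B^{\mathrm{sym}}$ is positive definite on the tangent bundle). I then extend $\psi$ smoothly to a tubular neighbourhood of $\Sigma$ and, by cut-off, to all of $\R^d$, abusing notation to call the extension $\psi$. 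The heart of the argument is the one-step consistency estimate
\[
\E\bigl[\psi(x^{(\ell+1)})\,\big|\,x^{(\ell)}\bigr] = \psi(x^{(\ell)})+h\,(\mathcal L\psi)(x^{(\ell)})+R_\ell, \qquad |R_\ell|\le Ch^2,
\]
from which all three items of Theorem~\ref{thm:Err} follow by standard manipulations.

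To establish the one-step estimate I set $\tilde\psi:=\psi\circ\Theta^A$. Since $\nabla F=(\nabla\xi)\,\xi$ vanishes on $\Sigma$, $\Theta^A$ fixes $\Sigma$ pointwise and $\tilde\psi|_\Sigma=\psi|_\Sigma$. Taylor-expanding $\tilde\psi$ at $x^{(\ell)}\in\Sigma$ to second order in $\Delta:=x^{(\ell+\frac12)}-x^{(\ell)}=b(x^{(\ell)})h+\sqrt{2\beta^{-1}h}\,\sigma(x^{(\ell)})\eta^\lb$, with $b$ the drift in~\eqref{scheme-non-reversible}, and taking conditional expectation, the $h^{3/2}$ cubic contribution vanishes by the third-moment condition~\eqref{conditions-on-eta} and the quartic remainder is $O(h^2)$ thanks to a.s.\ boundedness of $\eta^\lb$. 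Substituting $(\nabla\Theta^A)^T=P$ from Proposition~\ref{prop-1st-varphi} and the Hessian identity~\eqref{eqn-2nd-derivative-theta} from Proposition~\ref{prop2}, the surviving $O(h)$ piece reduces, after cancellation of the $P\nabla\cdot a$ contributions from the drift and Hessian parts, to
\[
h\Bigl[-(B\nabla U)\cdot\nabla\psi+\frac{1}{\beta}\sum_{i,j}(\partial_j B_{ij})\,\partial_i\psi+\frac{1}{\beta}\sum_{i,j}(PaP^T)_{ij}\,\partial_i\partial_j\psi\Bigr].
\]
Comparing with the divergence-form expression~\eqref{generator-l} for $\mathcal L\psi$, this equals $h\mathcal L\psi(x^{(\ell)})$ provided one establishes the linear-algebra identity $PaP^T=B^{\mathrm{sym}}$ on $\Sigma^{(\delta)}$, which is the main obstacle and is genuinely new relative to~\cite{Zhang20}. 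I expect to prove it by writing $P-I=-(a-A)\nabla\xi\Phi^{-1}\nabla\xi^T$, exploiting $\Phi^T=\nabla\xi^T(a+A)\nabla\xi$ to reduce all occurring Schur-type blocks to expressions in $\Phi,\Phi^T$, and collapsing the claim to $\Phi+\Phi^T=2\nabla\xi^T a\nabla\xi$, which is immediate from $A^T=-A$.

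Granted the one-step estimate, part~\ref{thm:main-1} is obtained by telescoping: summing and dividing by $T=nh$, the sum $\sum_\ell\E[\psi(x^{(\ell+1)})-\psi(x^{(\ell)})]$ collapses to a boundary term bounded by $2\|\psi\|_{L^\infty(\Sigma)}$, yielding $|\E[\hat f_n]-\bar f|\le C(h+1/T)$. For part~\ref{thm:main-3}, I decompose
\[
\psi(x^{(\ell+1)})-\psi(x^{(\ell)})=h\,\mathcal L\psi(x^{(\ell)})+R_\ell+M_\ell,
\]
where $M_\ell$ is an a.s.\ bounded martingale-difference sequence with respect to $\mathcal F_\ell:=\sigma(\eta^{(j)}:j\le\ell)$ satisfying $\E[M_\ell^2\mid\mathcal F_\ell]=O(h)$. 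Kronecker's lemma combined with the strong law for square-integrable martingales then yields $T^{-1}\sum_\ell M_\ell=O(T^{-1/2+\eps})$ almost surely, and adding the $O(h)$ and $O(1/T)$ contributions from $R_\ell$ and the boundary term gives the pathwise bound.

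For part~\ref{thm:main-2}, the additional ingredient is to identify the leading martingale variance with $\chi_f^2$. The same Taylor expansion gives $\E[M_\ell^2\mid\mathcal F_\ell]=2\beta^{-1}h\,\nabla\psi^T PaP^T\nabla\psi(x^{(\ell)})+O(h^2)$, and the identity $PaP^T=B^{\mathrm{sym}}$ together with the scalar symmetry $\nabla\psi^T B\nabla\psi=\nabla\psi^T B^{\mathrm{sym}}\nabla\psi$ matches this with $2\beta^{-1}h$ times the integrand of $\chi_f^2$ in~\eqref{eqn-chi}. Applying part~\ref{thm:main-1} to the smooth observable $g=\nabla\psi^T PaP^T\nabla\psi$ converts $h\sum_\ell\E[M_\ell^2\mid\mathcal F_\ell]$ into $T\chi_f^2+O(Th+1)$; a Young-inequality split of the cross terms between the martingale and drift contributions, with the excess absorbed into the free constant $C_1>1$ at the cost of enlarging $C_2$, then delivers the mean-square bound~\eqref{mean-square-error-thm2}.
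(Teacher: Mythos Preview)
Your proposal is correct and follows essentially the same route as the paper: Taylor-expand $\psi\circ\Theta^A$, use Propositions~\ref{prop-1st-varphi}--\ref{prop2} to identify the $O(h)$ term with $\mathcal L\psi$ (this is Lemma~\ref{app:lem-gen-mod} in Appendix~\ref{app:thm-err}), then telescope and separate martingale from remainder terms exactly as in~\cite{MattinglyStuartTretyakov10,Zhang20}. Two minor remarks: the identity $PaP^T=B^{\mathrm{sym}}$ that you flag as ``the main obstacle and genuinely new'' is in fact already recorded as the preliminary relation~\eqref{relation-p-b} in Lemma~\ref{lemma-p-b} (proved there via the auxiliary frame $V$ rather than your direct Schur-block computation, which also works); and for part~\ref{thm:main-3} the paper uses $2r$-th moment bounds on the martingale sums together with Borel--Cantelli rather than Kronecker's lemma, but either argument is standard.
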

The proof of Theorem~\ref{thm:Err} is given in Appendix~\ref{app:thm-err}.

\begin{remark}[Choice of random variables $\bm{\eta}^{(\ell)}$]  \label{rmk-on-choice-of-noise}
  Note that the map $\Theta^A$ \eqref{def-theta-map-intro} may not be
  well-defined outside $\Sigma^{(\delta)}$ (also see
  Remark~\ref{rmk-well-posedness-of-limit-theta}).  To avoid technical issues,
  we assume that the random variables used in the scheme \eqref{scheme-non-reversible}
  are almost surely bounded (see \eqref{conditions-on-eta}), since this implies that starting from $x^{(\ell)}\in \Sigma$ the intermediate states $x^{(\ell + \frac{1}{2})}$
  will remain close to $\Sigma$, i.e.\ $x^{(\ell + \frac{1}{2})}\in
  \Sigma^{(\delta)}$, whenever the step-size $h$ is small enough. 
  We note that an alternative way to avoid this issue is by modifying the definition of $\xi$ for states outside $\Sigma^{(\delta)}$, such that the long-time limit $\Theta^A(x)$ of ODE
  \eqref{phi-map-a-intro}--\eqref{fun-cap-f-intro} is well-defined with
  $\Theta^A(x)\in \Sigma$ for all $x\in \mathbb{R}^d$. This follows for
  instance if $\nabla\xi^T\nabla\xi \succeq c_1 I_k$ for some $c_1>0$ on entire
  $\mathbb{R}^d$ (see Remark~\ref{rmk-on-set-sigma-delta} and the proof of
  Proposition~\ref{prop:varphi}). Such an assumption is often adopted when dealing with reaction coordinates~\cite[Section 1.1]{legoll2010effective} (see discussion in Section~\ref{sec:discuss} for more details). 

  In any case, in spite of this technical issue, we indeed expect that
  Theorem~\ref{thm:Err} remains true when 
  $\bm{\eta}^{(\ell)}$ are standard Gaussian random variables, since
  on the one hand $\Theta^A$ is well-defined with value on $\Sigma$
  for quite general starting points and on the other hand 
  it becomes rarer for the intermediate states $x^{(\ell + \frac{1}{2})}$
  to escape from $\Sigma^{(\delta)}$ when $h$ is small. 
\end{remark}

In applications, the conditional probability measure $\cmu$ and the
infinitesimal generator $\mathcal{L}$ are often assumed to satisfy the
Poincar{\'e} inequality~\cite{LelievreZhang18} with some constant $K>0$, i.e.\
       \begin{equation}
	 \mbox{Var}_{\cmu}(g) := \int_{\Sigma} (g-\overline{g})^2\,d\cmu 
	 \le -\frac{1}{K} \int_{\Sigma} (\mathcal{L}g) g\,d\cmu =
	 \frac{1}{K\beta} \int_{\Sigma} (B^{\mathrm{sym}} \nabla g)\cdot \nabla g\,d\cmu \,,
	 \label{poincare-inequality-mu1}
       \end{equation}
       for any $\,g : \Sigma\rightarrow \mathbb{R}$ such that the right hand side above is finite and $\overline{g}=\E_{\mu}[g]$. Here we have
       used~\eqref{integration-by-parts-f} to arrive at the final equality.
       Under this additional assumption, we can further express the mean square error estimate in Theorem~\ref{thm:Err} as follows.
\begin{corollary}  \label{corollary-on-spectral-gap}
  Under the same assumptions as in Theorem~\ref{thm:Err} and further assuming that 
  the Poincar{\'e} inequality (\ref{poincare-inequality-mu1}) is satisfied, we have 
  \begin{align}
    \E\left[\big|\widehat{f}_n - \overline{f}\big|^2\right] \le  
    \frac{2C_1 \mbox{\textnormal{Var}}_{\cmu}(f)}{KT} + C_2\Big(h^2 +
    \frac{h}{T}+\frac{1}{T^2}\Big)\,,
    \label{mean-square-estimate-poincare}
  \end{align}
  where $C_1$ is any constant larger than one, $C_2$ 
  depends on $C_1$ but is independent of both $h$ and $n$, and
  $\mathrm{Var}_{\cmu}(f):= \E_{\mu}[|f-\bar f|^2]$ is the variance of $f$. 
\end{corollary}
The proof of Corollary~\ref{corollary-on-spectral-gap} is given in Appendix~\ref{app:thm-err}.

The following result compares the numerical scheme \eqref{scheme-non-reversible} (and the SDE \eqref{sde-on-sigma}) with $A\neq 0$ (non-reversible) to the case when $A=0$ (reversible). 
The asymptotic variance when $A=0$ is 
\begin{equation}
  \chi_{f,0}^2 = \frac{2}{\beta}\int_{\Sigma} (B_0\nabla\psi_0)\cdot \nabla\psi_0\,d\cmu\,,
  \label{eqn-chi-reversible}
\end{equation}
where $\psi_0$ is the (unique) solution to the Poisson equation 
\begin{equation}
  \mathcal L_0 \psi_0 = f-\bar f\,,   \text{ on } \Sigma, \    \textrm{ such that } \E_{\cmu}[\psi_0] = 0\,,
  \label{eqn-poisson-equation-reversible}
\end{equation}
with 
\begin{equation}
\mathcal L_0=\frac{\mathrm{e}^{\beta U}}{\beta} \nabla\cdot \left(
\mathrm{e}^{-\beta U} B_0 \nabla \right)
  \label{generator-l-reversible}
\end{equation}
and $\bar f = \E_{\cmu}[f]$. 
Consider the following Poincar{\'e} inequality~\cite{LelievreZhang18,Zhang20} with constant $K_0>0$
       \begin{equation}	 \label{poincare-inequality-mu1-reversible}
	 \int_{\Sigma} (g-\overline{g})^2\,d\cmu 
	 \le -\frac{1}{K_0} \int_{\Sigma} (\mathcal{L}_0\,g) g\,d\cmu =
	 \frac{1}{K_0\beta} \int_{\Sigma} (B_0 \nabla g)\cdot \nabla g\,d\cmu 
       \end{equation}
       for any $\,g : \Sigma\rightarrow \mathbb{R}$ such that the right hand
       side is finite. Then we arrive at the following estimates, whose proof is given in Section~\ref{sec-compare}. 
\begin{prop}\label{prop:impro-ass-var}
  Let $K$, $K_0>0$ be the largest constants for which~\eqref{poincare-inequality-mu1}
  and~\eqref{poincare-inequality-mu1-reversible} are satisfied respectively (called the spectral gap), and  
  $\chi^2_{f}$, $\chi^2_{f,0}$ are the asymptotic variances
  in~\eqref{eqn-chi} and~\eqref{eqn-chi-reversible} respectively, where $f\in L^2(\Sigma, \cmu)$.
  We have 
  \begin{enumerate}[label=(\arabic*)]
    \item\label{it:cmp-poincare-constants}
  $K \ge K_0$\,.
  \item \label{it:cmp-asymptotic-variance}
  $\chi^2_{f} \le \chi^2_{f,0}$\,.
  \end{enumerate}
  \label{prop-cmp-poincare-and-asym-variance}
\end{prop}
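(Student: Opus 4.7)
The plan is to reduce both claims to a single matrix inequality, namely $B^{\mathrm{sym}} \succeq B_0$ as quadratic forms on $\R^d$ at every point of $\Sigma^{(\delta)}$. Once this is in hand, claim~\ref{it:cmp-poincare-constants} is immediate by comparison of Dirichlet forms, while claim~\ref{it:cmp-asymptotic-variance} follows by a two-step chain through an auxiliary symmetric operator, in the spirit of the Euclidean arguments of~\cite{DuncanLelievrePavliotis16}.

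To establish the matrix inequality I would set $M := a - A$, $N := \nabla\xi$, $S := N^T a N$, and note that $T := N^T A N$ is skew-symmetric, so $\Phi = S - T$. From $B = M - M N \Phi^{-1} N^T M$ together with $v^T A v = 0$ and careful bookkeeping of which side of $N$ transposes $A$, one obtains
\begin{equation*}
v^T B v \;=\; v^T a v \;-\; (u + r)^T \Phi^{-1}(u - r), \qquad u := N^T a v, \ \ r := N^T A v.
\end{equation*}
When $v$ is tangent to the level set (so that $u = 0$) this collapses to $v^T B^{\mathrm{sym}} v = v^T a v + r^T \Phi^{-1} r$; combined with the algebraic identity $\tfrac{1}{2}(\Phi^{-1} + \Phi^{-T}) = \Phi^{-T} S \Phi^{-1}$, the extra term equals $|S^{1/2} \Phi^{-1} r|^2 \ge 0$, whereas $v^T B_0 v = v^T a v$ on tangent vectors. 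The bilateral annihilations $B N = N^T B = 0$ and $B_0 N = 0$ (with $B_0^T = B_0$) show that both quadratic forms depend only on the tangential part of any vector, so the tangential inequality extends to $v^T B^{\mathrm{sym}} v \ge v^T B_0 v$ for every $v \in \R^d$. Integrating against $\mu$ and using~\eqref{integration-by-parts-f} and its reversible counterpart yields
\begin{equation*}
-\int_\Sigma (\mathcal L g)\,g\,d\mu \;\ge\; -\int_\Sigma (\mathcal L_0 g)\,g\,d\mu, \qquad \forall\, g \in C^2(\Sigma).
\end{equation*}
Claim~\ref{it:cmp-poincare-constants} follows at once from the variational characterization of the spectral gap as the infimum of the Dirichlet form over mean-zero test functions normalized by $\mathrm{Var}_\mu(g)$.

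For claim~\ref{it:cmp-asymptotic-variance}, I would introduce the mean-zero solution $\tilde\psi$ to the \emph{symmetric} Poisson equation $\mathcal S \tilde\psi = f - \bar f$, its asymptotic variance $\tilde\chi_f^2 := -2\int_\Sigma (f - \bar f)\tilde\psi\,d\mu$, and the symmetric bilinear form $\mathcal E(g,h) := -\int_\Sigma (\mathcal S g) h\,d\mu$. Using $\mathcal L = \mathcal A + \mathcal S$ with $\mathcal A^* = -\mathcal A$ from Proposition~\ref{prop-generator-l}\ref{it:prop-on-l-4}, the antisymmetry of $\mathcal A$ produces
\begin{equation*}
\mathcal E(\psi,\psi) \;=\; \mathcal E(\psi,\tilde\psi) \;=\; \tfrac{1}{2}\chi_f^2, \qquad \mathcal E(\tilde\psi,\tilde\psi) \;=\; \tfrac{1}{2}\tilde\chi_f^2,
\end{equation*}
so expanding $\mathcal E(\psi - \tilde\psi, \psi - \tilde\psi) \ge 0$ gives $\chi_f^2 \le \tilde\chi_f^2$. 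Both $-\mathcal S$ and $-\mathcal L_0$ are symmetric positive operators on the mean-zero subspace of $L^2(\Sigma,\mu)$, and $-\mathcal S \succeq -\mathcal L_0$ in the quadratic-form sense by the matrix inequality; operator monotonicity of inversion then yields $(-\mathcal S)^{-1} \preceq (-\mathcal L_0)^{-1}$, hence $\tilde\chi_f^2 \le \chi_{f,0}^2$. Chaining the two bounds concludes the proof.

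The main obstacle is the matrix inequality itself. The quantity $(u+r)^T \Phi^{-1}(u-r)$ is not a symmetric quadratic form in $v$, and the argument hinges on exploiting the tangency $u = 0$ to reduce this Schur-complement-type expression to the manifest square $r^T \Phi^{-T} S \Phi^{-1} r$. Everything downstream is a direct transposition of the Euclidean DLP-type argument to $\Sigma$, made possible by the decomposition and integration-by-parts identities of Proposition~\ref{prop-generator-l}.
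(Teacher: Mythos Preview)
Your overall plan is sound and close to the paper's, but the proof of the key matrix inequality $B^{\mathrm{sym}}\succeq B_0$ has a genuine gap. You correctly derive
\[
v^T B v \;=\; v^T a v \;-\; (u+r)^T\Phi^{-1}(u-r),\qquad u=N^T a v,\ \ r=N^T A v,
\]
and correctly note that $BN=N^TB=B_0N=N^TB_0=0$, so it suffices to check the inequality for tangent vectors, i.e.\ $v$ with $N^Tv=0$. However the parenthetical ``(so that $u=0$)'' is incorrect: tangency means $N^Tv=0$, not $N^T a v=0$, and these coincide only in the special case $a=I_d$. For general $a$ the term $u$ does \emph{not} vanish on tangent vectors, and your reduction of $(u+r)^T\Phi^{-1}(u-r)$ to the nonnegative square $|S^{1/2}\Phi^{-1}r|^2$ breaks down; likewise $v^TB_0v=v^Tav-u^TS^{-1}u$ is not simply $v^Tav$. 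So as written the argument establishes $B^{\mathrm{sym}}\succeq B_0$ only when $a=I_d$. The paper sidesteps this by passing to the representation $B=V\Pi^{-1}V^T$, $B_0=V\Pi_0^{-1}V^T$ from Lemma~\ref{lemma-p-b}, which turns the comparison into the $(d{-}k)\times(d{-}k)$ inequality $\tfrac12(\Pi^{-1}+\Pi^{-T})\succeq\Pi_0^{-1}$ and then proves the latter by a Schur-complement computation (Lemma~\ref{lemma-compare-of-dirichlet-energy}).

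Once the matrix inequality is in hand, your treatment of~\ref{it:cmp-poincare-constants} is the same as the paper's. Your treatment of~\ref{it:cmp-asymptotic-variance} via the auxiliary symmetric Poisson equation $\mathcal S\tilde\psi=f-\bar f$ and the energy expansion $\mathcal E(\psi-\tilde\psi,\psi-\tilde\psi)\ge 0$ is correct and is a clean variational alternative to the paper's route through the operator identity $[(-\mathcal L)^{-1}]^{\mathrm{sym}}=\big[-\mathcal S+\mathcal A^*(-\mathcal S)^{-1}\mathcal A\big]^{-1}$; both approaches use $-\mathcal S\succeq-\mathcal L_0$ and operator monotonicity of inversion for the second half of the chain.
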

The following remark summarises the importance of this result when comparing the reversible and non-reversible setting.
\begin{remark}  \label{rmk-compare}
In~\cite{DuncanLelievrePavliotis16} it is shown that linearly adding a non-reversible force to a reversible dynamics leaves the spectral gap unchanged but reduces the asymptotic variance (also see~\cite{HwangHwangSheu93,lelievre2013optimal,improve-reversible-sampler}). 

In our setting with $A\neq 0$,
  Proposition~\ref{prop-cmp-poincare-and-asym-variance} implies that the
  spectral gap of SDE~\eqref{sde-on-sigma} is always larger or equal to, and
  the asymptotic variance is always smaller than, the reversible case $A=0$. In particular, the second item in Proposition~\ref{prop-cmp-poincare-and-asym-variance} 
(together with the second item of Theorem~\ref{thm:Err} and
  Corollary~\ref{corollary-on-spectral-gap}) implies that the mean square
  error of the numerical scheme \eqref{scheme-non-reversible} is smaller than
  the reversible case (compare to~\cite[Corollary 3.7]{Zhang20}) and
  consequently the non-reversible scheme outperforms the reversible scheme in
  the long-time limit $T\rightarrow\infty$. Note that this only reflects the
  analytical improvement and we refer to Section~\ref{sec-numerical} for further discussions about their performance on a concrete example.
\end{remark}

So far we have presented results for the numerical
scheme~\eqref{scheme-non-reversible} which uses the exact long-time limit
$\Theta^A$ \eqref{def-theta-map-intro}. However in practice, numerical approximations of the map $\Theta^A$ are often used to project the states back to
$\Sigma$. Recall the ODE~\eqref{phi-map-a-kappa} with parameter $\kappa\in
[0,1)$, whose solution converges to the same limit $\Theta^A$
in finite time when $\kappa >0 $ (see Lemma~\ref{lemma-rescaled}). Consider the numerical scheme 
\begin{align}
\label{scheme-non-reversible-numerical}
\begin{split} 
  \widetilde{x}^{(\ell + \frac{1}{2})}_i &= \widetilde{x}^\lb_i +
  \sum_{j=1}^{d} \Big( (A_{ij}-a_{ij})\frac{\partial U}{\partial x_j} +
  \frac{1}{\beta}\frac{\partial a_{ij}}{\partial x_j}\Big)(\widetilde{x}^\lb)\, h
    + \sqrt{2 \beta^{-1}h}\, \sum_{j=1}^{d_1}\sigma_{ij}(\widetilde{x}^\lb)\, \eta^\lb_j\,, \quad 1 \le i \le
  d \,, 
  \\
  \widetilde{x}^{(\ell+1)} &= \Tnum\big(\widetilde{x}^{(\ell+\frac{1}{2})}\big)\,, 
\end{split}
\end{align}
for $\ell = 0, 1, \dots$, where $\widetilde{x}^{(0)}\in \Sigma$, and $\Tnum$ denotes a numerical approximation of $\Theta^A$, obtained by
integrating the ODE~\eqref{phi-map-a-kappa} with a rescaling parameter
$\kappa\in (0, 1)$ using the (initial) time step-size $\Delta t>0$, until the
convergence criterion $|\xi|\le \eps_{\mathrm{tol}}$ is met for a given
$\eps_{\mathrm{tol}}>0$. 
Note that the states generated by the scheme
\eqref{scheme-non-reversible-numerical} belong to
$\Sigma^{(\eps_{\mathrm{tol}})}$, i.e.\ $\widetilde x^{(\l)}\in
\Sigma^{(\eps_{\mathrm{tol}})}$ for $\l \ge 0$ (the neighbourhood $\Sigma^{(\eps_{\mathrm{tol}})}$ is defined similarly as $\Sigma^{(\delta)}$ in \eqref{sigma-delta}).
To analyse the scheme~\eqref{scheme-non-reversible-numerical} we make the following assumption on $\Tnum$. 
\begin{assump}
  For $\eps_{\mathrm{tol}} \in (0, \delta)$, there exists $\Delta t_{\mathrm{max}}>0$
and $p\in\mathbb N^+$ such that, for any $\Delta t \in (0, \Delta
  t_{\mathrm{max}})$, the map $\Tnum: \Sigma^{(\delta)}\rightarrow \Sigma^{(\eps_{\mathrm{tol}})}$
  is well-defined and satisfies
  \begin{equation}    \label{assump-on-tnum}
    |\Tnum(x) - \Theta^A(x)| \le C (\Delta t)^p,  \quad \forall\, x \in \Sigma^{(\delta)}\,,
  \end{equation}
  where $C>0$ is a constant independent of both $\Delta t$ and $x\in \Sigma^{(\delta)}$.
  \label{assump-3}
\end{assump}
This assumption is indeed satisfied in practice (see Remark~\ref{rmk-numerical-computation-theta} below).
The following result summarises the error estimates for the scheme~\eqref{scheme-non-reversible-numerical}.

\begin{thm}  \label{thm-numerical-theta}
  Let $n\in\mathbb N^+$, $\kappa \in (0,1)$, step-sizes $h, \Delta t >0$, $\eps_{\mathrm{tol}}>0$ and set $T:=nh$. 
Furthermore let the numerical approximation $\Tnum$ satisfy Assumption~\ref{assump-3}.
  For any $f\in C^2(\Sigma)$, define $\bar{f}=\E_{\cmu}[f]$ and the running average 
  \begin{equation}    \label{running-average-approx}
    \widetilde{f}_n =\frac1n \sum\limits_{\l=0}^{n-1}f(\widetilde{x}^\lb)\,,
\end{equation}
  where $\widetilde{x}^{(\l)}\in \Sigma^{(\eps_{\mathrm{tol}})}$, $\l=0,1,\dots, n-1$, are computed using the
  numerical scheme~\eqref{scheme-non-reversible-numerical},
  and the same notation $f$ is used for some extension of $f$ to $C^2(\Sigma^{(\eps_{\mathrm{tol}})})$.
  Then there exists $h_{0} > 0$, such that for any $h\in (0,h_0)$ and $\Delta
  t\in (0, \Delta t_{\mathrm{max}})$ the following estimates hold. 
\begin{enumerate}[label=(\arabic*)]
\item There exists a constant $C>0$, independent of $n$, $h$ and $\Delta t$, such that 
\begin{equation*}
  \big| \E[\widetilde{f}_n] - \bar f\big| \leq C\Big((\Delta t)^p + h+\dfrac1T \Big)\,.
\end{equation*}
\item  
There exists constants $C_1, C_2>0$, independent of $n$, $h$ and $\Delta t$, such that 
\begin{equation}  \label{mean-square-error-numerical-version}
  \E\big[ |\widetilde f_n - \bar f|^2\big] \leq \frac{C_1\chi_f^2}{T} + C_2\Big((\Delta t)^p + h^2 + \frac{h}{T}+\frac{1}{T^2}\Big),
\end{equation}
  where $C_1$ is any constant larger than one, $C_2$ depends on the choice of $C_1$, and $\chi_f^2$ is the asymptotic variance \eqref{eqn-chi}.

\item  For any $\eps\in(0,\frac12)$, there exists 
  a constant $C>0$, independent of $n$, $h$ and $\Delta t$, and an almost surely bounded positive random variable $\zeta = \zeta(\omega)$, such that 
\begin{equation*}
  \big|\widetilde f_n - \bar f\big| \leq C\Big((\Delta t)^p + h \Big)+\frac{\zeta}{T^{\frac12-\eps}}\,,  \ \text{almost surely}
\end{equation*}
for sufficiently large $n$.
\end{enumerate}
\end{thm}
The proof of Theorem~\ref{thm-numerical-theta} is adapted from the proof of Theorem~\ref{thm:Err}, and we outline it in Appendix~\ref{app:thm-err}.

We note that the result in Corollary~\ref{corollary-on-spectral-gap} can also be extended to the case of numerical scheme~\eqref{scheme-non-reversible-numerical}. 
Let us conclude with the following remark on Assumption~\ref{assump-3}.
\begin{remark}    \label{rmk-numerical-computation-theta}
  In practice, $\Tnum(x)$ corresponds to the numerical solution of $\Theta^{A}(x)$, obtained by
integrating the ODE~\eqref{phi-map-a-kappa} with a rescaling parameter
$\kappa\in (0, 1)$, using numerical ODE methods until the convergence
  criterion $|\xi| < \eps_{\mathrm{tol}}$ is met. The integer $p \in \mathbb{N}^+$ in
  Assumption~\ref{assump-3} corresponds to the order of
  the numerical ODE method used (e.g.\ $p=4$ for the classical fourth-order Runge-Kutta method).
   The theory of numerical ODE methods guarantees that 
  there exists a constant $C>0$, such that the error of the numerical
  solution to \eqref{phi-map-a-kappa} up to the finite time 
$\bar{s}$ \eqref{tim-s-bar} is bounded by $C(\Delta t)^p$ for all initial states in
  $\Sigma^{(\delta)}$. Since the true solution of \eqref{phi-map-a-kappa} converges to $\Theta^A$ within the finite time $\bar{s}$ (see Lemma~\ref{lemma-rescaled})
  and $\xi(\Theta^A(x)) \equiv 0$ for all $x\in \Sigma^{(\delta)}$, given $\eps_{\mathrm{tol}}>0$, there exists $\Delta t_{\mathrm{max}}>0$ such that, for any $\Delta t \in (0,
  \Delta t_{\mathrm{max}})$, the convergence criterion $|\xi| \le \eps_{\mathrm{tol}} $ is met before time $\bar{s}$ and the bound \eqref{assump-on-tnum} is satisfied. 

Numerical methods with adaptive step-sizes can also be used, in which case $\Delta t$ is the initial step-size.
\end{remark}

As stated in Remark~\ref{rmk-on-non-constant-A}, the results in this paper can be generalised to non-constant $A$ and we briefly outline the differences in the following remark. 

\begin{remark}\label{rem:genA}
When $A$ is state-dependent and skew-symmetric, i.e.\ $A: \mathbb{R}^d\rightarrow \mathbb{R}^{d\times d}$ such that $A^T(x)=-A(x)$ for any $x \in \mathbb{R}^d$, the following soft-constrained dynamics 
\begin{equation}
  d X^{i,\eps}_s  = \sum_{j=1}^d \Bigl[ (A_{ij}-a_{ij}) \frac{\partial}{\partial x_j}\Big(U+\frac{1}{\eps} F\Big)\,
  + \frac{1}{\beta} \frac{\partial (a_{ij}-A_{ij})}{\partial x_j}\Bigr](X^{\eps}_s) \,ds
 + \sqrt{2\beta^{-1}} \sum_{j=1}^{d_1}\sigma_{ij}(X^{\eps}_s)\, dW^j_s\,,
  \quad 1 \le i \le d\,,
\end{equation}
admits $\mu^\eps$ \eqref{mu-eps} as an invariant measure under fairly general conditions, with the only change being the $-\partial A_{ij}/\partial x_j$ term as compared to~\eqref{eq:soft-const}. Following the discussion in Section~\ref{sec:Intro}, this motivates the corresponding numerical scheme 
  \begin{align*} %\label{scheme-non-reversible-general-A}
    x^{(\ell + \frac{1}{2})}_i &= x^\lb_i + \sum_{j=1}^{d} \Big( (A_{ij}-a_{ij})\frac{\partial U}{\partial x_j}  
%    -a_{ij} \frac{\partial U}{\partial x_j}   
    + \frac{1}{\beta}\frac{\partial (a_{ij}-A_{ij})}{\partial x_j}\Big)(x^\lb)\, h
    + \sqrt{2 \beta^{-1}h}\, \sum_{j=1}^{d_1}\sigma_{ij}(x^\lb)\, \eta^\lb_j\,, \quad 1 \le i \le
  d \,,\\
  x^{(\ell+1)} &= \Theta^A\big(x^{(\ell+\frac{1}{2})}\big)\,.
\end{align*}
following the same analysis presented in this paper.  The same auxiliary results as in Propositions~\ref{prop-generator-l},~\ref{prop:inv-meas},~\ref{prop-1st-varphi} hold here, with the only difference arising in Proposition~\ref{prop2} where~\eqref{eqn-2nd-derivative-theta} reads
\begin{equation*} 
  \sum_{j,r=1}^da_{jr} \frac{\partial^2 \Theta_i^A}{\partial x_j\partial x_r} 
    = \sum_{j=1}^d\frac{\partial B_{ij}}{\partial x_{j}} - \sum_{j,\l=1}^d P_{i\l} \frac{\partial ( a_{\l j} - A_{\l j})}{\partial x_{j}} \,.
\end{equation*}
The proofs of these results are simple modifications of the constant $A$ case. Finally, all the results stated above in this section carry over to this setting as well.  
\end{remark}

\section{Preliminaries}
\label{sec-prepare}
In this section we state some preliminary results on the quantities introduced in Section~\ref{sec-main-results}. 

Recall that Assumption~\ref{assump-2} implies that $\rank(\nabla\xi) = k$
holds in the neighbourhood $\Sigma^{(\delta)}$ \eqref{sigma-delta} (see Remark~\ref{rmk-on-set-sigma-delta}). 
Let $V:\Sigma^{(\delta)}\rightarrow\R^{d\times(d-k)}$ be a matrix-valued function whose
column vectors are linearly independent and are orthogonal to the column vectors of $\nabla\xi$, i.e.\ 
\begin{equation}
\forall x\in\Sigma^{(\delta)} : \  V^T(x)\nabla\xi(x) = 0\in\R^{(d-k)\times k}.
  \label{condition-for-v}
\end{equation}
Such a function $V$, while not unique and not assumed to be smooth on $\Sigma^{(\delta)}$, always exists since $\nabla\xi$ has full rank in $\Sigma^{(\delta)}$ (see Remark~\ref{rmk-on-v-p-b} below). 

Furthermore, we introduce the matrix-valued function $\Pi:\Sigma^{(\delta)} \rightarrow \R^{(d-k)\times (d-k)}$
defined as  
\begin{equation}
  \Pi := V^T(a-A)^{-1}V\,. 
  \label{pi}
\end{equation}
Note that $a-A$ is invertible on $\mathbb{R}^d$, since $v^T(a-A)v
=v^Tav > 0$, for all $v \in \mathbb{R}^d$ with $v\neq 0$ (Assumption~\ref{assump-1}). 
 The following lemma collects some basic properties of $\Pi$ and $\Phi$.
    \begin{lemma} Assume $x \in \Sigma^{(\delta)}$. The matrices $\Phi,\Pi$ defined in \eqref{phi-gamma} and \eqref{pi} satisfy
      \begin{enumerate}[label=(\arabic*)]
	\item \label{it:Pi-Psi-Inv} 
     $\Pi$ and $\Phi$ are invertible.
	\item\label{it:Phi-Spec}
	  All $k$ eigenvalues of the matrix $\Phi$ have positive real parts.
	\item\label{it:V-Iden}
We have the identity
\begin{equation}  \label{projection-sum-identity}
  V\Pi^{-1}V^T(a-A)^{-1}+ (a-A)\nabla\xi  \Phi^{-1}\nabla\xi^T =I_d\,.
\end{equation}
\end{enumerate}
      \label{lemma-on-matrix-pi-phi}
    \end{lemma}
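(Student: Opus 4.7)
My plan proceeds in the order of the three items, exploiting throughout the following ``positivity'' of $a-A$: since $A$ is skew-symmetric, $v^T(a-A)v = v^T a v \ge c_0|v|^2 > 0$ for every nonzero $v\in\R^d$, so $a-A$ is invertible, and the same positivity transfers to $(a-A)^{-1}$: setting $u = (a-A)^{-1}v$ and using $(a-A)^T = a+A$ one gets $v^T(a-A)^{-1}v = u^T(a-A)^T u = u^T a u > 0$.

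For item \ref{it:Pi-Psi-Inv}, if $w\in\R^k\setminus\{0\}$ then $\nabla\xi\, w\neq 0$ by the full column rank of $\nabla\xi$ on $\Sigma^{(\delta)}$, so $w^T\Phi w = (\nabla\xi w)^T(a-A)(\nabla\xi w) > 0$; thus $\Phi$ has trivial kernel and is invertible. The argument for $\Pi$ is identical, using the positivity of $(a-A)^{-1}$ and the full column rank of $V$. For item \ref{it:Phi-Spec}, I take a (possibly complex) eigenpair $(\lambda,w)$ of $\Phi$, write $\lambda = \alpha + i\beta$ and $w = u + iv$ with $u,v\in\R^k$, and take the real part of $\bar w^T\Phi w = \lambda|w|^2$. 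Since $\nabla\xi^T A\nabla\xi$ is skew-symmetric and hence contributes zero to any real quadratic form, the real part reduces to $u^T\nabla\xi^T a\nabla\xi u + v^T\nabla\xi^T a\nabla\xi v$, which is strictly positive because $w\neq 0$ forces $(u,v)\neq(0,0)$ and $\nabla\xi$ has full rank. Therefore $\alpha>0$.

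Item \ref{it:V-Iden} is the main obstacle, and I approach it through projection algebra. Let $Q_1 := V\Pi^{-1}V^T(a-A)^{-1}$ and $Q_2 := (a-A)\nabla\xi\,\Phi^{-1}\nabla\xi^T$, so the claim is $Q_1 + Q_2 = I_d$. Direct substitution using the definitions of $\Pi$, $\Phi$ together with $V^T\nabla\xi = 0$ shows $Q_i^2 = Q_i$ and $Q_1 Q_2 = Q_2 Q_1 = 0$, hence $Q_1+Q_2$ is idempotent. It therefore suffices to prove injectivity of $Q_1+Q_2$, because an injective idempotent on $\R^d$ must be $I_d$. To prove injectivity, I first observe that $\mathrm{image}(V)\cap\mathrm{image}((a-A)\nabla\xi) = \{0\}$: from $V\alpha = (a-A)\nabla\xi\beta$ one obtains, by applying $V^T(a-A)^{-1}$, the identity $\Pi\alpha = V^T\nabla\xi\beta = 0$ and thus $\alpha=0$. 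Consequently $(Q_1+Q_2)x=0$ implies $Q_1 x = Q_2 x = 0$; by the full column rank of $V$ and $\nabla\xi$ this unpacks to $V^T(a-A)^{-1}x=0$ and $\nabla\xi^T x = 0$. Setting $u = (a-A)^{-1}x$, the first equation puts $u$ in the orthogonal complement of $\mathrm{image}(V)$, which by a dimension count combined with $V^T\nabla\xi = 0$ equals $\mathrm{image}(\nabla\xi)$; writing $u = \nabla\xi\gamma$ gives $x = (a-A)\nabla\xi\gamma$ and $\nabla\xi^T x = \Phi\gamma = 0$, so $\gamma=0$ and $x=0$, which completes the argument.
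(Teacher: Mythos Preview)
Your proof is correct. Items \ref{it:Pi-Psi-Inv} and \ref{it:Phi-Spec} are essentially the paper's arguments, though your treatment of $\Pi$ is in fact cleaner: you first establish that $(a-A)^{-1}$ has strictly positive quadratic form and then argue exactly as for $\Phi$, whereas the paper proves injectivity of $\Pi$ more indirectly, by showing that $\Pi\zeta=0$ forces $(a-A)^{-1}V\zeta \in \mathrm{image}(\nabla\xi)$ and then using positivity of $a$ to conclude.

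For item \ref{it:V-Iden} the two arguments differ in organisation. The paper proceeds by first proving that the columns of $V$ and of $(a-A)\nabla\xi$ together form a basis of $\R^d$, and then verifying directly that $Q:=Q_1+Q_2$ fixes every basis vector: $QV=V$ and $Q(a-A)\nabla\xi=(a-A)\nabla\xi$. You instead exploit the projection structure, checking $Q_i^2=Q_i$ and $Q_1Q_2=Q_2Q_1=0$ so that $Q_1+Q_2$ is idempotent, and then reduce to injectivity. Your injectivity argument, however, relies on exactly the same ingredients as the paper's basis argument --- the trivial intersection $\mathrm{image}(V)\cap\mathrm{image}((a-A)\nabla\xi)=\{0\}$ and the dimension count giving $\mathrm{image}(V)^\perp=\mathrm{image}(\nabla\xi)$ --- so the two proofs are close cousins. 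The paper's route is a bit shorter, since checking the action on a spanning set is slightly more economical than unpacking the kernel; your route has the advantage of making the complementary-projection structure of $Q_1$ and $Q_2$ explicit, which is conceptually pleasant and foreshadows the use of $P=Q_1$ as a projection later in the paper.
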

    \begin{proof}
\noindent\ref{it:Pi-Psi-Inv}   We will show that $\Pi\zeta= V^T(a-A)^{-1}V\zeta = 0$, with $\zeta \in
      \mathbb{R}^{d-k}$, implies $\zeta = 0$. 
      Since the columns of $V$ and
      $\nabla \xi$ span the entire $\mathbb{R}^d$ and the orthogonality \eqref{condition-for-v} holds, $\Pi\zeta=0$ implies that there exists a $v \in \mathbb{R}^k$ such that $(a-A)^{-1}V\zeta = \nabla\xi v$.
      Therefore, 
      \begin{equation*}
	(\nabla\xi v)^T a\nabla\xi v
	= v^T \nabla\xi^T (a-A)\nabla\xi v  = v^T \nabla\xi^T V \zeta = 0\,,
      \end{equation*}
      which implies 
$(a-A)^{-1}V\zeta = \nabla\xi v = 0$. Since the column vectors of $V$ are
      linearly independent, we conclude that $\zeta =0$ and therefore $\Pi$ is
      invertible.

      The invertibility of $\Phi$ follows from the fact that
      $v^T\Phi v = (\nabla \xi v)^T(a-A) \nabla \xi v = (\nabla \xi v)^Ta
      \nabla \xi v > 0$ for all $v \in \mathbb{R}^k$ with $v\neq 0$ (Assumption~\ref{assump-1}).\vspace{0.05in}

\noindent\ref{it:Phi-Spec} 
  	  Let $\lambda\in \mathbb{C}$ be a (complex) eigenvalue of $\Phi$ and
	  assume the corresponding eigenvector is $v\in \mathbb{C}^k$, where $v \neq 0$. 
	    Multiplying both sides of $\nabla\xi^T(a-A)\nabla\xi v = \lambda v\,$
	  by $\bar{v}^T$ (the conjugate transpose of $v$), gives $\bar{v}^T \nabla\xi^T(a-A)\nabla\xi v =
	  \lambda |v|^2$. Taking conjugate transpose and using $\overline{a-A}^T=a+A$, we obtain
	  $\bar{v}^T \nabla\xi^T(a+A)\nabla\xi v = \bar\lambda |v|^2$. 
	  Summing up these two identities, we deduce $\mbox{Re}(\lambda) |v|^2
	  = (\nabla\xi \bar{v})^T a \nabla\xi v$, where $\mbox{Re}(\lambda)$ denotes the real part of $\lambda$.
	  Note that we also have $\nabla\xi v\neq 0$, since $v\neq 0$ and the columns of $\nabla\xi$ are linearly
	  independent. Therefore, using Assumption~\ref{assump-1}, 
	  we conclude $\mbox{Re}(\lambda)>0$. \vspace{0.05in}

\noindent \ref{it:V-Iden}  First, let us show that the column vectors of $V$ and
	  $(a-A)\nabla\xi$ form a linearly independent basis of $\mathbb{R}^d$. Assume 
	  that $V\zeta_1 + (a-A)\nabla\xi\zeta_2 = 0$, where $\zeta_1 \in
	  \mathbb{R}^{d-k}$ and $\zeta_2\in \mathbb{R}^{k}$. 
	  It suffices to show that both $\zeta_1$ and $\zeta_2$ are zero vectors.
	  Multiplying both sides by $\nabla\xi^T$, we get
	  $\nabla\xi^T(a-A)\nabla\xi\zeta_2 = \Phi\zeta_2=0$. The
	  invertibility of $\Phi$ gives $\zeta_2 = 0$, which in turn implies
	  that $\zeta_1 = 0$, since the column vectors of $V$ are linearly independent. 

Next, define $Q:=V\Pi^{-1} V^T(a-A)^{-1} + (a-A)\nabla\xi  \Phi^{-1}\nabla\xi^T$.
	  Since the column vectors of $V$ and $(a-A)\nabla\xi$ form a basis of
	  $\mathbb{R}^d$, any vector $\zeta\in \mathbb{R}^d$ can be written as $\zeta =
	  V\zeta_1 + (a-A)\nabla\xi\zeta_2$, for some $\zeta_1\in
	  \mathbb{R}^{d-k}$ and $\zeta_2\in \mathbb{R}^{k}$.
	  Using $V^T\nabla\xi = 0$, the definitions of $\Phi$ in
	  \eqref{phi-gamma} and $\Pi$ in \eqref{pi}, one can verify that $QV = V$ and $Q(a-A)\nabla\xi = (a-A)\nabla\xi$.
	  Therefore, 
	  \begin{equation*} 
	    Q\zeta = QV\zeta_1 + Q(a-A)\nabla\xi\zeta_2 = V\zeta_1 + (a-A)\nabla\xi\zeta_2 = \zeta\,,
	  \end{equation*}
	  which shows that $Q=I_d$.  
    \end{proof}

The following result summarises some crucial properties of $P$ and $B$ defined in \eqref{projection-p}.    
In particular this result states that $P$ is a projection.
\begin{lemma} \label{lemma-p-b}
For any $x\in \Sigma^{(\delta)}$ we can write 
\begin{equation}  \label{matrix-p-b}
  P= V\Pi^{-1}V^T(a-A)^{-1}, \,\quad B = V\Pi^{-1}V^T\,.
\end{equation}
The matrix $P$ satisfies $P^2=P$, $PV=V$ and $\nabla\xi^TP=0$. Furthermore, we have the relation 
\begin{equation} \label{relation-p-b}
PaP^T =B^\mathrm{sym}= \frac12 (B + B^T)\,.
\end{equation}
\end{lemma}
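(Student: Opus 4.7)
The plan is to derive the closed form \eqref{matrix-p-b} first, since every other claim follows algebraically from it together with properties already established in Lemma~\ref{lemma-on-matrix-pi-phi}. The formula for $P$ is an immediate rearrangement of the identity~\eqref{projection-sum-identity}: subtract $(a-A)\nabla\xi\Phi^{-1}\nabla\xi^T$ from both sides to get $V\Pi^{-1}V^T(a-A)^{-1}=I_d-(a-A)\nabla\xi\Phi^{-1}\nabla\xi^T=P$. Multiplying on the right by $(a-A)$ then gives $B=P(a-A)=V\Pi^{-1}V^T$, which is the second formula.

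Next I would verify the three structural identities of $P$. For $P^2=P$ and $PV=V$, the key observation is that $V^T(a-A)^{-1}V=\Pi$ by definition, so the inner factor $\Pi^{-1}V^T(a-A)^{-1}\cdot V\Pi^{-1}$ simplifies to $\Pi^{-1}$, yielding $P^2=V\Pi^{-1}V^T(a-A)^{-1}=P$; similarly $PV=V\Pi^{-1}(V^T(a-A)^{-1}V)=V\Pi^{-1}\Pi=V$. The identity $\nabla\xi^TP=0$ is handled directly from the defining formula $P=I_d-(a-A)\nabla\xi\Phi^{-1}\nabla\xi^T$: multiply on the left by $\nabla\xi^T$ and use $\nabla\xi^T(a-A)\nabla\xi=\Phi$ to produce $\nabla\xi^T-\nabla\xi^T=0$.

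The substantive step, and the only one that requires a little care, is \eqref{relation-p-b}. My plan is to split $a=\tfrac12\bigl((a-A)+(a+A)\bigr)$ using the skew-symmetry $A^T=-A$ (so that $(a-A)^T=a+A$) and handle each piece separately. For the first piece, $P(a-A)P^T=BP^T$, and I would show $BP^T=B$ by invoking the transposed identity $V^TP^T=(PV)^T=V^T$, which collapses $BP^T=V\Pi^{-1}(V^TP^T)=V\Pi^{-1}V^T=B$. For the second piece, $(a+A)P^T=\bigl(P(a-A)\bigr)^T=B^T$, so $P(a+A)P^T=PB^T=P\,V\Pi^{-T}V^T=(PV)\Pi^{-T}V^T=V\Pi^{-T}V^T=B^T$. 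Adding the two pieces gives $2PaP^T=B+B^T$, which is exactly $2B^{\mathrm{sym}}$.

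The only conceivable obstacle is bookkeeping around transposes, since $A^T=-A$ and $a^T=a$ must be combined carefully when identifying $(a-A)^T$ with $a+A$ and $\Pi^T$ with $V^T(a+A)^{-1}V$. But once the closed forms \eqref{matrix-p-b} are in hand, all remaining identities reduce to the repeated use of $V^T(a-A)^{-1}V=\Pi$ and $PV=V$, so no genuinely new computation is needed.
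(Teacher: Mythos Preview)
Your proposal is correct and follows essentially the same approach as the paper: both derive \eqref{matrix-p-b} directly from the identity~\eqref{projection-sum-identity}, and both prove \eqref{relation-p-b} via the splitting $a=\tfrac12\bigl((a-A)+(a+A)\bigr)$. Your handling of $PaP^T$ is marginally more streamlined---you use the structural facts $BP^T=B$ and $PB^T=B^T$ (via $PV=V$) rather than expanding everything through $\Pi$ as the paper does---but the underlying computation is the same.
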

\begin{proof}
The relation~\eqref{matrix-p-b} follows readily from the definition of $P$ in~\eqref{projection-p} and the identity~\eqref{projection-sum-identity}. It is straightforward to verify that $P^2=P$ and $PV=V$.  Using~\eqref{matrix-p-b} we compute
\begin{align*}
  PaP^T &= V\Pi^{-1}V^T(a-A)^{-1}a (a+A)^{-1}V\Pi^{-T}V^T\\
&= \frac12 V\Pi^{-1}V^T(a-A)^{-1}\big[(a+A) + (a-A)\big] (a+A)^{-1}V\Pi^{-T}V^T\\
&=   \frac12 \Big[V\Pi^{-1}V^T(a-A)^{-1} V\Pi^{-T}V^T +
  V\Pi^{-1}V^T(a+A)^{-1} V\Pi^{-T}V^T\Big]\\
&= \frac12 \big(V\Pi^{-T}V^T + V\Pi^{-1}V^T\big)\\
  &= \frac12 (B + B^T) 
   =B^\mathrm{sym}\,,
\end{align*}
where the first equality follows since $(a-A)^{-T} = (a+A)^{-1}$, the second
  equality follows since $2a=(a+A)+(a-A)$ and the fourth equality follows from
  the definition of $\Pi$ in \eqref{pi}.      
\end{proof}

\begin{remark}  \label{rmk-on-v-p-b}
  Note that the function $V$ satisfying \eqref{condition-for-v} is not unique and we do
  not assume the smoothness of $V$ on $\Sigma^{(\delta)}$. In spite of the relationship~\eqref{matrix-p-b}, the matrices $P$ and $B$ are in fact independent of the choice of
  $V$ by definition~\eqref{projection-p} and are $C^3$-smooth on
  $\Sigma^{(\delta)}$ under Assumption~\ref{assump-2}. 
\end{remark}
\section{Ergodic SDEs on the submanifold}
\label{sec-ergodic-sde}
The goal of this section is to study the diffusion process \eqref{sde-on-sigma} with $X_0\in \Sigma$. In particular, we prove 
Proposition~\ref{prop-generator-l} which identifies its infinitesimal
generator $\mathcal{L}$ and decomposes it into symmetric and anti-symmetric
parts, and Proposition~\ref{prop:inv-meas} on its ergodicity. 

First, let us recall the convergence of the probability measure
$\mu^\eps$~\eqref{mu-eps} on $\mathbb{R}^d$ to $\cmu$ \eqref{mu1-intro} on $\Sigma$.
\begin{lemma}\label{lem:mueps-conv}
The probability measure $\mu^\eps$ converges to $\cmu$ in the sense that for any $f\in C_b(\R^d)$ we have
\begin{equation*}  \label{lemma-convergence-to-mu1}
\lim\limits_{\eps\rightarrow 0} \int_{\R^d} fd\mu^\eps = \int_{\Sigma} fd\cmu.
\end{equation*}
\end{lemma}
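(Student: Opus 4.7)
The plan is to reduce the statement to a Laplace-type concentration argument by foliating $\R^d$ into level sets of $\xi$ via the co-area formula, after which the Gaussian weight $e^{-\beta|z|^2/(2\eps)}$ concentrates all mass at $z=\bm 0$ as $\eps\to 0$.

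First I would apply the co-area formula to the $C^1$-map $\xi:\R^d\to\R^k$. For any Borel $g:\R^d\to\R$ with $g e^{-\beta U} [\det(\nabla\xi^T\nabla\xi)]^{-1/2}$ integrable along (almost) every level set,
\begin{equation*}
\int_{\R^d} g(x)\,dx \;=\; \int_{\R^k}\!\!\int_{\Sigma_z} g(x)\,\bigl[\det(\nabla\xi^T\nabla\xi)(x)\bigr]^{-\frac12} d\nu_{\Sigma_z}(x)\,dz,
\end{equation*}
where $\Sigma_z=\xi^{-1}(z)$ and $\nu_{\Sigma_z}$ denotes the induced surface measure. Applied to $g(x)=f(x)e^{-\beta(U+F/\eps)}$ and noting $F(x)=\frac12|\xi(x)|^2=\frac12|z|^2$ on $\Sigma_z$, this yields
\begin{equation*}
\int_{\R^d} f\, d\mu^\eps \;=\; \frac{1}{Z^\eps}\int_{\R^k} e^{-\beta|z|^2/(2\eps)}\, G_f(z)\,dz,\qquad
G_f(z) := \int_{\Sigma_z} f\,e^{-\beta U}\,\bigl[\det(\nabla\xi^T\nabla\xi)\bigr]^{-\frac12} d\nu_{\Sigma_z}.
\end{equation*}
Note that $G_f(\bm 0)=Z\int_\Sigma f\,d\mu$ by the definition of $\mu$ in \eqref{mu1-intro}.

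Second, I would show that $G_f$ is continuous at $z=\bm 0$. Since $\Sigma$ is compact and $\operatorname{rank}(\nabla\xi)=k$ on the tubular neighbourhood $\Sigma^{(\delta)}$ from Remark~\ref{rmk-on-set-sigma-delta}, the level sets $\Sigma_z$ for $|z|<\delta$ form a smooth foliation of $\Sigma^{(\delta)}$ and can be parametrised diffeomorphically over $\Sigma$ using the flow of $(a-A)\nabla\xi$ (indeed, this is precisely the content of Proposition~\ref{prop-1st-varphi}). Pulling the integral back to $\Sigma$ gives $G_f(z)=\int_{\Sigma}\mathcal{F}(x,z)\,d\nu_\Sigma(x)$ with $\mathcal{F}$ jointly continuous in $(x,z)$ for $|z|<\delta$, so $G_f$ is continuous at $\bm 0$ by dominated convergence.

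Third, I would rescale $z=\sqrt{\eps/\beta}\,w$, which turns the numerator into
\begin{equation*}
\int_{\R^d} f\,d\mu^\eps \;=\; \frac{(\eps/\beta)^{k/2}}{Z^\eps}\!\int_{\R^k} e^{-|w|^2/2}\,G_f\bigl(\sqrt{\eps/\beta}\,w\bigr)\,dw.
\end{equation*}
I would split the $w$-integral into the ball $|w|\le \delta\sqrt{\beta/\eps}$ and its complement. On the ball, continuity of $G_f$ at $\bm 0$ together with dominated convergence (using that $G_f$ is locally bounded near $\bm 0$ and $f$ is bounded) gives convergence of the integrand to $G_f(\bm 0)e^{-|w|^2/2}$. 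The tail $|z|\ge \delta$ is killed by the factor $e^{-\beta\delta^2/(2\eps)}$ provided the un-rescaled bound
$\int_{|z|\ge\delta} G_{|f|}(z)\,dz<\infty$ holds, which is exactly the integrability built into $Z^\eps<\infty$ applied with $f$ replaced by $\|f\|_\infty$. This yields
\begin{equation*}
\int_{\R^d} f\,d\mu^\eps \;=\; \frac{(\eps/\beta)^{k/2}(2\pi)^{k/2}}{Z^\eps}\bigl(G_f(\bm 0)+o(1)\bigr)\qquad\text{as }\eps\to 0.
\end{equation*}
Applying the same expansion with $f\equiv 1$ yields $Z^\eps=(\eps/\beta)^{k/2}(2\pi)^{k/2}(Z+o(1))$, and taking the ratio all Gaussian prefactors cancel, leaving $\int f\,d\mu^\eps\to G_f(\bm 0)/Z=\int_\Sigma f\,d\mu$.

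The main obstacle is the tail analysis: the problem does not assume $e^{-\beta U}$ is integrable on $\R^d$, so one has to justify the dominated-convergence step using only the (given) finiteness of $Z^\eps$ for at least one $\eps>0$, then transfer uniform integrability to all smaller $\eps$ using $F\ge0$. Once this is in hand, the rest is a routine foliation + Laplace-method computation.
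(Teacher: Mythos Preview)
Your proposal is correct and follows the same route the paper sketches: the paper omits the proof entirely, noting only that it ``follows by applying the co-area formula, the pointwise convergence $\exp(-\frac{1}{2\eps}\sum\xi^2_\alpha(x)) \rightarrow \mathds{1}_{\Sigma}(x)$, \ldots\ and the dominated convergence theorem.'' Your argument is a fleshed-out version of exactly this, with the Laplace rescaling $z=\sqrt{\eps/\beta}\,w$ making the concentration step explicit; you also correctly flag and resolve the tail-integrability issue (finiteness of $Z^{\eps_0}$ for one $\eps_0$ propagates to all smaller $\eps$ via $F\ge 0$), which the paper leaves implicit. One minor remark: invoking Proposition~\ref{prop-1st-varphi} for the foliation is overkill---the standard tubular-neighbourhood/implicit-function argument suffices to parametrise $\Sigma_z$ over $\Sigma$ for small $|z|$.
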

We omit the proof of Lemma~\ref{lemma-convergence-to-mu1}, since it is standard and follows by applying the co-area formula, the pointwise convergence
$\exp(-\frac{1}{2\eps}\sum\xi^2_\alpha(x)) \rightarrow \mathds{1}_{\Sigma}(x)$,
where $ \mathds{1}_{\Sigma}$ is the indicator function on the set $\Sigma$, and the
dominated convergence theorem. Lemma~\ref{lemma-convergence-to-mu1} is useful
below as it allows us to convert integrals on $\Sigma$ to integrals on $\mathbb{R}^d$, where it is easier to apply integration by parts formula.

Next, we give the proof of Proposition~\ref{prop-generator-l}.
\begin{proof}[Proof of Proposition~\ref{prop-generator-l}]
\noindent\ref{it:prop-on-l-1}
We prove \eqref{generator-l}.
Since  $(P\sigma)(P\sigma)^T=P\sigma\sigma^TP^T=PaP^T$, the generator of SDE \eqref{sde-on-sigma} for a test function $f$ is
  \begin{equation}
    \mathcal{L}f = - \sum_{i,j=1}^d B_{ij} \frac{\partial U}{\partial x_j} \frac{\partial
    f}{\partial x_i} + \frac{1}{\beta} 
    \sum_{i,j=1}^d \frac{\partial B_{ij}}{\partial x_j}
    \frac{\partial f}{\partial x_i} + \frac{1}{\beta} 
    \sum_{i,j=1}^d (PaP^T)_{ij} \frac{\partial^2 f}{\partial x_i\partial
    x_j}\,.
    \label{generator-l-in-the-proof}
  \end{equation}
  To prove \eqref{generator-l}, it is sufficient to note that the second-order derivative terms 
  in \eqref{generator-l-in-the-proof} satisfy that
\begin{equation*}
\sum\limits_{i,j=1}^d (PaP^T)_{ij} \frac{\partial^2 f}{\partial x_i\partial
  x_j}=\frac12\sum\limits_{i,j=1}^d B_{ij} \frac{\partial^2 f}{\partial
  x_i\partial x_j}+\frac12\sum\limits_{i,j=1}^d B_{ji} \frac{\partial^2
  f}{\partial x_i\partial x_j}=\sum\limits_{i,j=1}^d B_{ij} \frac{\partial^2 f}{\partial x_i\partial x_j},
\end{equation*}
  where we have used \eqref{relation-p-b}  to arrive at the first equality and the second equality follows by interchanging indices and using the symmetry of $\nabla^2 f$.  

\noindent\ref{it:prop-on-l-2}
We first prove~\eqref{conditions-met-by-j} for the vector $J$ and then \eqref{generator-l-decomp} for the generator $\mathcal{L}$.
  Since $B\nabla\xi =B^T\nabla\xi =0$ by~\eqref{matrix-p-b}, we have
  $(B^{\mathrm{asym}})^T\nabla\xi= B^{\mathrm{asym}}\nabla\xi =0$. Therefore using~\eqref{vector-j}, for all $1 \le \alpha \le k$,
  \begin{equation}\label{eq:J-orth-der-xi}
    \begin{aligned}
      J\cdot \nabla \xi_\alpha
      = \frac{\mathrm{e}^{\beta U}}{\beta} \sum_{i,j=1}^d \frac{\partial
      (B^{\mathrm{asym}}_{ij}\mathrm{e}^{-\beta U})}{\partial x_j}
      \frac{\partial \xi_\alpha}{\partial x_i}
      = \frac{\mathrm{e}^{\beta U}}{\beta}  
      \nabla\cdot\big[\mathrm{e}^{-\beta U}
      (B^{\mathrm{asym}})^T\,\nabla\xi_\alpha\big] - 
\frac{1}{\beta} \sum_{i,j=1}^d B^{\mathrm{asym}}_{ij}
      \frac{\partial^2 \xi_\alpha}{\partial x_i \partial x_j}
      = 0\,,
    \end{aligned}
  \end{equation}
  where the last equality follows since $B^{\mathrm{asym}}$ is anti-symmetric and therefore 
  $\sum_{i,j=1}^d B^{\mathrm{asym}}_{ij} \frac{\partial^2 \xi_\alpha}{\partial
  x_i\partial x_j} = 0$. Using~\eqref{projection-p} we find $PJ=J$. Concerning
  the second identity in \eqref{conditions-met-by-j}, using the anti-symmetry of $B^{\mathrm{asym}}$, we find
  \begin{equation*}
\nabla\cdot(J\,\mathrm{e}^{-\beta U}) = 
\frac{1}{\beta} \sum_{i,j=1}^d \frac{\partial^2
      (B^{\mathrm{asym}}_{ij}\mathrm{e}^{-\beta U})}{\partial x_i\partial x_j}
      = 0\,.
  \end{equation*}

  Concerning \eqref{generator-l-decomp}, from \eqref{generator-l} we can compute 
\begin{align}
%  \begin{aligned}
  \mathcal{L}f   
    &= \frac{\mathrm{e}^{\beta U}}{\beta} \sum_{i,j=1}^d \frac{\partial}{\partial x_j}\left(B_{ij}
    \mathrm{e}^{-\beta U} \frac{\partial f}{\partial x_i} \right) \nonumber\\
    &= \frac{\mathrm{e}^{\beta U}}{\beta} \sum_{i,j=1}^d
    \frac{\partial}{\partial x_j}\left(B^{\mathrm{asym}}_{ij}
    \mathrm{e}^{-\beta U} \frac{\partial f}{\partial x_i} \right)  +
    \frac{\mathrm{e}^{\beta U}}{\beta} \sum_{i,j=1}^d \frac{\partial}{\partial x_j}\left(B^{\mathrm{sym}}_{ij}
    \mathrm{e}^{-\beta U} \frac{\partial f}{\partial x_i} \right) \nonumber\\
    &= \sum_{i=1}^d J_i\frac{\partial f}{\partial x_i} 
    + \frac{1}{\beta} \sum_{i,j=1}^d
    B^{\mathrm{asym}}_{ij} \frac{\partial^2 f}{\partial x_i\partial x_j}   +
    \frac{\mathrm{e}^{\beta
    U}}{\beta} \sum_{i,j=1}^d \frac{\partial}{\partial
  x_j}\left(B^{\mathrm{sym}}_{ij} \mathrm{e}^{-\beta U} \frac{\partial
    f}{\partial x_i} \right) \nonumber\\
    &= J\cdot \nabla f + \frac{\mathrm{e}^{\beta
    U}}{\beta} 
    \nabla\cdot \big(\mathrm{e}^{-\beta U}\,B^{\mathrm{sym}} \nabla f\big)\,,
%  \end{aligned}
  \label{generator-l-rmk}
\end{align}
  where the third equality follows from the definition~\eqref{vector-j} of $J$, and the final equality follows from the anti-symmetry of $B^{\mathrm{asym}}$. This proves~\eqref{generator-l-decomp}.

\noindent\ref{it:prop-on-l-3}
  We prove the integration by parts formula~\eqref{integration-by-parts}. 
  Since the integrals involved in~\eqref{integration-by-parts} are defined on $\Sigma$, without any loss of generality we can assume that $f=g=0$ on $\mathbb{R}^d\setminus \Sigma^{(\delta)}$. Under this assumption the integrations in the following calculations 
%\eqref{calculation-invariance-of-l} below
are well defined on $\mathbb{R}^d$, although $B$ is defined on $\Sigma^{(\delta)}$. Using the expression \eqref{generator-l} of $\mathcal{L}$, 
  the convergence of $\mu^\eps$ to $\cmu$ stated in Lemma~\ref{lemma-convergence-to-mu1}, 
  as well as integration by parts formula on $\mathbb{R}^d$, we find
  \begin{align}
    \int_{\Sigma} (\mathcal{L}f)\,g \,d\cmu 
      & =  \frac{1}{\beta}\int_{\Sigma}
      \mathrm{e}^{\beta U}\, \Big[\nabla\cdot\left( \mathrm{e}^{-\beta U} B^T
      \nabla f \right)\Big]\, g\, d\cmu \notag\\
      & = \frac{1}{\beta} \lim_{\eps \rightarrow 0} 
	     \int_{\mathbb{R}^d}
      \mathrm{e}^{\beta U} \Big[\nabla\cdot\left(\mathrm{e}^{-\beta U} B^T \nabla f
      \right)\Big]\,g\, d\mu_\eps \notag\\
    & = \frac{1}{\beta} \lim_{\eps \rightarrow 0} 
\frac{1}{Z^\eps} \int_{\mathbb{R}^d} 
\mathrm{e}^{\beta U} \Big[\nabla\cdot\left( \mathrm{e}^{-\beta U} B^T \nabla f
    \right)\Big]\,g\,
             \exp\Big[-\beta\Big(U + \frac{|\xi|^2}{2\eps} \Big)\Big]\,dx \notag%\label{calculation-invariance-of-l}
	     \\
    & = - \frac{1}{\beta} \lim_{\eps \rightarrow 0} \frac{1}{Z^\eps} \int_{\mathbb{R}^d} 
 \mathrm{e}^{-\beta U} \left(B^T \nabla f \right)\cdot
	     \nabla \left[g\,\exp\Big(-\frac{\beta|\xi|^2}{2\eps} \Big)\right] \,dx\notag \\
    & = - \frac{1}{\beta} \lim_{\eps \rightarrow 0} \frac{1}{Z^\eps} \int_{\mathbb{R}^d} 
    \mathrm{e}^{-\beta U} \Big[\left(B^T \nabla f \right)\cdot
	     \nabla g\Big]\,\exp\Big(-\frac{\beta|\xi|^2}{2\eps} \Big) \,dx\notag \\
   & \quad +    \sum_{\alpha=1}^k \lim_{\eps \rightarrow 0} 
\frac{1}{\eps Z^\eps} \int_{\mathbb{R}^d} 
  \mathrm{e}^{-\beta U} \Big[(\nabla f^T B \nabla \xi_\alpha)\,\xi_\alpha\Big]
  \exp\Big(-\frac{\beta|\xi|^2}{2\eps}\Big)\,g\,dx\notag \\
    & = - \frac{1}{\beta} \lim_{\eps \rightarrow 0} \frac{1}{Z^\eps} \int_{\mathbb{R}^d} 
    \mathrm{e}^{-\beta U} \Big[\left(B^T \nabla f \right)\cdot
	     \nabla g\Big]\,\exp\Big(-\frac{\beta|\xi|^2}{2\eps} \Big) \,dx\notag \,,
  \end{align}
    where the third equality follows from the definition~\eqref{mu-eps} of $\mu^\eps$, the fourth equality follows from integration by parts in $\R^d$ and the final equality follows since $B\nabla\xi=0$. Applying
    Lemma~\ref{lemma-convergence-to-mu1} once more we find
  \begin{align*}
    \int_{\Sigma} (\mathcal{L}f)\,g \,d\cmu 
    & = - \frac{1}{\beta} \lim_{\eps \rightarrow 0} \frac{1}{Z^\eps} \int_{\mathbb{R}^d} 
    \mathrm{e}^{-\beta U} \Big[\left(B^T \nabla f \right)\cdot
	     \nabla g\Big]\,\exp\Big(-\frac{\beta|\xi|^2}{2\eps} \Big) \,dx \\
    & = - \frac{1}{\beta} \lim_{\eps \rightarrow 0} \int_{\mathbb{R}^d} 
    \left(B^T \nabla f \right)\cdot
	     \nabla g \,d\mu_\eps\\
    & = - \frac{1}{\beta} \int_{\Sigma} \left(B^T \nabla f \right)\cdot \nabla g \,d\cmu\,,
    \end{align*}
    which proves~\eqref{integration-by-parts}. Note that we can use
    Lemma~\ref{lemma-convergence-to-mu1} since $B\in C^3(\Sigma^{(\delta)})$
    (see Remark~\ref{rmk-on-v-p-b}) and $B^T\nabla f=0$ in $\R^d\setminus \Sigma^{(\delta)}$ by the choice of $f$.
    Identity~\eqref{integration-by-parts-f} follows by using~\eqref{integration-by-parts} and noting that $B^{\mathrm{sym}}$ is the
    symmetric part of $B$. 
    
\noindent\ref{it:prop-on-l-4}   
Using Lemma~\ref{lem:mueps-conv} and integrating by parts in $\R^d$ we find
\begin{align*}
-\int_{\Sigma} (\mathcal A f) \,g\,d\mu &= \lim_{\eps\rightarrow 0}
  \frac{1}{Z^\eps}\int_{\R^d}f\,\nabla\cdot \Bigl\{J  \exp\Bigl[-\beta
  \Big(U+\frac{|\xi|^2}{2\eps} \Big) \Bigr]g\Bigr\}\,dx\\
&= \lim_{\eps\rightarrow 0} \int_{\R^d} f\, J\cdot\nabla g \,d\mu^\eps - \lim_{\eps\rightarrow 0} \frac{\beta}{\eps} \sum_{\alpha=1}^k\int_{\R^d}  \xi_\alpha  J\cdot \nabla\xi_\alpha  \,g\, d\mu^\eps \\
&= \lim_{\eps\rightarrow 0}\int_{\Sigma} f \, J\cdot\nabla g\, d\mu^\eps \\
  & =  \int_{\Sigma} f\, (\mathcal A g)\,d\mu,
\end{align*}
where the second equality follows from~\eqref{conditions-met-by-j} and the third equality follows from~\eqref{eq:J-orth-der-xi}.
Note that to perform this calculation we have continuously extended $f,g\in
C^2(\Sigma)$ to $C^2(\R^d)$ such that the extensions are supported within $\Sigma^{(\delta)}$.
This shows that $\mathcal A^* = - \mathcal A$. 

Define 
$\overline{\mathcal{L}}:= \frac{\mathrm{e}^{\beta U}}{\beta} \nabla\cdot
\left( \mathrm{e}^{-\beta U} B \nabla g \right)$. Since
$B^T\nabla\xi=B\nabla\xi=0$, we can repeat the calculations in the previous item above, which yields
\begin{equation*}
\int_{\Sigma} (\overline{\mathcal{L}}f)\,g \,d\cmu = - \frac{1}{\beta} \int_{\Sigma} \left(B \nabla f \right)\cdot \nabla g \,d\cmu =  - \frac{1}{\beta} \int_{\Sigma}  \nabla f\cdot \left(B^T\nabla g \right) \,d\cmu  =\int_{\Sigma} (\mathcal{L}g)\,f \,d\cmu.
\end{equation*}
and therefore $\overline{\mathcal L} = \mathcal L^*$.
\end{proof}
We are ready to prove Proposition~\ref{prop:inv-meas}, which shows that the SDE~\eqref{sde-on-sigma} evolves on $\Sigma$ and is ergodic with respect to the target measure $\cmu$~\eqref{mu1-intro}.

\begin{proof}[Proof of Proposition~\ref{prop:inv-meas}]
Identity~\eqref{sde-on-sigma-decomp} follows directly as a result of~\eqref{generator-l-decomp} and ~\eqref{relation-p-b}.
  Since $V^T\nabla\xi=0$, using \eqref{matrix-p-b} we have 
  $\nabla\xi^T P = \nabla\xi^T B=0$. Therefore~\eqref{generator-l} implies $\mathcal{L}\xi = 0$. Applying It\^o's lemma we obtain
  \begin{equation*}
    d\xi(X_s) = (\mathcal{L}\xi)(X_s)\,ds + \sqrt{2\beta^{-1}}(\nabla\xi^T P\sigma)(X_s)\,dW_s = 0\,,
%    \label{ito-of-xi-xi}
  \end{equation*}
and, since $X_0\in\Sigma$, $\xi(X_s)=\xi(X_0)=0$ almost surely, i.e.\ $X_s\in\Sigma$ for any $s \ge 0$. 

  Applying the integration by parts formula~\eqref{integration-by-parts} (with $g\equiv 1$), we find 
  \begin{equation*}
    \int_{\Sigma} \mathcal{L}f\, d\cmu = 0\,,
  \end{equation*}
  for all test functions $f$, which implies that $\cmu$ is invariant under the evolution of the process $X_s$.
    The ergodicity follows from the fact that $\mathcal{L}$ is an elliptic operator on the compact connected submanifold $\Sigma$. We
    omit the details but refer to~\cite{projection_diffusion,FaouL09,Zhang20} for further discussions.
\end{proof}

\section{Study of the map $\Theta^A$}
\label{sec-map-theta}
In this section we collect crucial properties of the projection $\Theta^A$,
the flow $\varphi^A$ of the ODE~\eqref{phi-map-a-intro}--\eqref{fun-cap-f-intro}, and the flow
$\varphi^{A,\kappa}$ of the modified ODE~\eqref{phi-map-a-kappa}. In
particular, we prove Propositions~\ref{prop-1st-varphi}--\ref{prop2} which
concern the derivatives of $\Theta^A$ and Lemma~\ref{lemma-rescaled} which
concerns the convergence of the flow $\varphi^{A,\kappa}$ to $\Theta^A$ in finite time.

Recall the map $\Theta^A:\R^d\rightarrow\R^d$ is defined via the long-time limit
\begin{equation}
\Theta^{A}(x)=\lim\limits_{s\rightarrow +\infty} \varphi^{A}(x,s).
  \label{theta-a-def}
\end{equation}
Here $\varphi^A:\R^d\times[0,+\infty)\rightarrow\R^d$ is an ODE flow
\begin{align}
  \begin{split}
    \frac{d\varphi^{A}(x,s)}{ds} =& -((a-A)\nabla F\big)\big(\varphi^{A}(x,s)\big)\,,\quad
    \varphi^{A}(x,0) = x, \qquad  \forall~x\in \mathbb{R}^d\,, 
\end{split}
  \label{phi-map-a}
\end{align}
and the function $F:\R^d\rightarrow\R$ is 
\begin{equation}
  F(x):=\frac{1}{2}|\xi(x)|^2=\frac{1}{2}\sum_{\alpha=1}^{k}\xi_\alpha^2(x)\,.
\label{fun-cap-f}
\end{equation}

The following result summarises the well-posedness and regularity of the flow.
\begin{prop}\label{prop:varphi}
The system~\eqref{phi-map-a} admits a unique solution $\varphi^A\in C^4(\R^d\times[0,+\infty))$.
  For any $x\in\Sigma^{(\delta)}$, the limit in \eqref{theta-a-def} is
  well defined and $\Theta^A: \Sigma^{(\delta)}\rightarrow \Sigma$ is a
  $C^4$-differentiable map. 
Furthermore, $\Theta^A(x)=x$ for any $x \in \Sigma$.
\end{prop}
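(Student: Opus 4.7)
The plan is to address the four assertions in order, with the $C^4$ regularity of $\Theta^A$ as the central technical obstacle.

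Under Assumptions~\ref{assump-1} and~\ref{assump-2}, the right-hand side of~\eqref{phi-map-a} is at least $C^3$, with $\nabla F=\nabla\xi\,\xi$ inheriting regularity from $\xi\in C^4$. The Picard-Lindel\"of theorem together with the standard result on smooth dependence on initial data therefore yields a unique maximal solution $\varphi^A$ that is jointly $C^4$ in $(x,s)$.

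Next I would derive the Lyapunov estimate from which everything else follows. Differentiating $F$ along the flow and using that $A$ is skew-symmetric (so $\nabla F^T A\nabla F=0$) together with $a\succeq c_0 I_d$ yields
\begin{equation*}
\frac{d}{ds}F(\varphi^A(x,s))=-\nabla F^T(a-A)\nabla F=-\nabla F^T a\,\nabla F\le -c_0|\nabla F|^2.
\end{equation*}
Writing $\nabla F=\nabla\xi\,\xi$ and invoking $\nabla\xi^T\nabla\xi\succeq c_1 I_k$ on $\Sigma^{(\delta)}$ from Remark~\ref{rmk-on-set-sigma-delta} gives $|\nabla F|^2=\xi^T(\nabla\xi^T\nabla\xi)\xi\ge 2c_1 F$, hence the exponential bound $F(\varphi^A(x,s))\le F(x)\,e^{-2c_0 c_1 s}$ as long as the trajectory stays in $\Sigma^{(\delta)}$. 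Since $F$ is non-increasing along the flow, the trajectory is trapped in the compact sublevel set $\{F\le F(x)\}\cap\overline{\Sigma^{(\delta)}}$ whenever $x\in\Sigma^{(\delta)}$, ruling out blow-up and yielding global existence on $[0,\infty)$. The resulting bound $|\dot\varphi^A|\le\|a-A\|_\infty|\nabla F|\le C\sqrt{F}\le C'e^{-c_0 c_1 s}$ makes $s\mapsto\varphi^A(x,s)$ Cauchy, so the limit $\Theta^A(x)$ exists; continuity of $\xi$ then gives $\xi(\Theta^A(x))=0$, i.e.\ $\Theta^A(x)\in\Sigma$. Finally, if $x\in\Sigma$ then $\nabla F(x)=0$, so $x$ is a stationary point of~\eqref{phi-map-a} and $\Theta^A(x)=x$ trivially.

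The remaining and principal difficulty is the $C^4$ regularity of $\Theta^A$. My plan is to upgrade the pointwise convergence $\varphi^A(x,s)\to\Theta^A(x)$ to uniform convergence of the spatial derivatives $\nabla_x^j\varphi^A(\cdot,s)$, $j=1,\dots,4$, on compact subsets of $\Sigma^{(\delta)}$; combined with $\Theta^A(\Sigma^{(\delta)})\subset\Sigma$, the standard theorem on uniform $C^k$-convergence then yields $\Theta^A\in C^4(\Sigma^{(\delta)},\Sigma)$. Each such derivative satisfies a linear variational ODE obtained by differentiating~\eqref{phi-map-a}, whose leading coefficient is $-\nabla[(a-A)\nabla F]$ and reduces on $\Sigma$ to $-\Gamma^T$ with $\Gamma=(a-A)\nabla\xi\nabla\xi^T$. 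By Lemma~\ref{lemma-on-matrix-pi-phi}\ref{it:Phi-Spec}, the non-zero eigenvalues of $\Gamma$ (equivalently of $\Phi$) have strictly positive real parts, while $\ker\Gamma$ is exactly the tangent directions to $\Sigma$, so the linearised flow contracts normally and is neutral tangentially. The exponential decay $F(\varphi^A)\le Ce^{-2c_0 c_1 s}$ controls the perturbation of the coefficient matrix from its limit $-\Gamma^T$, so a Duhamel/Gronwall argument, carried out by induction on $j$ to handle the polynomial-in-lower-derivatives inhomogeneities appearing at orders $j\ge 2$, should yield uniform bounds together with a Cauchy estimate of the form $|\nabla_x^j\varphi^A(x,s_1)-\nabla_x^j\varphi^A(x,s_2)|\le C_j e^{-\lambda\min(s_1,s_2)}$ uniformly on compacta, completing the proof.
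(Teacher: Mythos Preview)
Your proposal is correct and follows essentially the same approach as the paper: the Lyapunov computation $\frac{d}{ds}F(\varphi^A)=-\nabla F^T a\,\nabla F\le -2c_0c_1 F$ on $\Sigma^{(\delta)}$, the resulting exponential decay of $\xi(\varphi^A)$ and hence of $|\dot\varphi^A|$, the Cauchy argument for existence of the limit, and the observation that $\nabla F=0$ on $\Sigma$ gives $\Theta^A(x)=x$ there. For the $C^4$ regularity of $\Theta^A$, the paper merely sketches the argument (``integrate the ODEs for the derivatives of $\varphi^A$ and prove that the order of integrations and limits can be switched''), whereas you spell out more of the mechanism---the spectral splitting of $\Gamma$ into contracting normal and neutral tangential parts via Lemma~\ref{lemma-on-matrix-pi-phi}\ref{it:Phi-Spec}, and the inductive Duhamel/Gronwall scheme to propagate exponential Cauchy bounds through the variational hierarchy---which is a sound and somewhat more explicit route to the same conclusion.
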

\begin{proof}
The well-posedness of solution to~\eqref{theta-a-def} is standard, since $F$ is a Lyapunov function. 
  In fact, note that Assumption~\ref{assump-2} implies $\nabla\xi^T\nabla\xi \succeq c_1 I_k$ on $\Sigma^{(\delta)}$ for some constant $c_1 > 0$ (see Remark~\ref{rmk-on-set-sigma-delta}). Therefore using \eqref{fun-cap-f} and Assumption~\ref{assump-1}, for any $x \in \Sigma^{(\delta)}$ we find
  \begin{equation}
  (\nabla F^T a \nabla F)(x)=\big(\xi^T(\nabla\xi^Ta\nabla\xi)\xi\big)(x) 
    \ge c_0 (\xi^T\nabla\xi^T\nabla\xi \xi) (x)\ge c_0c_1 |\xi(x)|^2 = 2c_0 c_1 F(x), 
    \label{estimate-gradf-by-f}
  \end{equation} 
     from which we can derive 
  \begin{equation}
    \frac{dF(\varphi^{A}(x,s))}{ds} = - \big(\nabla F^T (a-A)
    \nabla F\big)(\varphi^{A}(x,s)) 
    \le - 2 c_0 c_1 F(\varphi^{A}(x,s))\,,
    \label{estimate-of-Lyapunov-f-kappa0}
  \end{equation}
  where the first equality follows from \eqref{phi-map-a} and the inequality
  follows from the antisymmetry of $A$ and the estimate \eqref{estimate-gradf-by-f}. 
  Consequently, $F(\varphi^A(x,s))$ and thereby $|\xi(\varphi^A(x,s))|$ converge
  exponentially to zero for points on $\Sigma^{(\delta)}$. 
Integrating both sides of \eqref{phi-map-a} and using the exponential decay of
  $|\xi(\varphi^A(x,s))|$, one can obtain that the limit on the right hand side
  of \eqref{theta-a-def} exists and therefore the map $\Theta^A$ is
  well-defined. The differentiability of $\Theta^A$ can be verified similarly, by
  integrating the ODEs for the derivatives of $\varphi^A$ (see
  \eqref{flow-map-Grad-general}--\eqref{flow-map-Hess-general} below) and
  proving that the order of integrations and limits ($s \rightarrow +\infty$) can be switched.
  Furthermore, if the initial point of the flow~\eqref{phi-map-a} $x\in\Sigma$, 
  \eqref{eq:nablaF-Sigma} below implies the right hand side of \eqref{phi-map-a} is zero, and it follows that 
$\varphi^A(x,s)=x$ for any $s\geq 0$, and therefore $\Theta^A(x)=x$.
\end{proof}

\begin{remark}  \label{rmk-well-posedness-of-limit-theta}
Proposition~\ref{prop:varphi} states that the limit $\Theta^A(x)$ in \eqref{theta-a-def} is well-defined with value in $\Sigma$ for points $x\in \Sigma^{(\delta)}$. This is enough to analyse the
  scheme~\eqref{scheme-non-reversible}, since we
  assume the random variables \eqref{conditions-on-eta} used in the scheme~\eqref{scheme-non-reversible} are bounded (see Remark~\ref{rmk-on-choice-of-noise}). 
  At the same time,  thanks to the existence of the natural Lyapunov function \eqref{fun-cap-f}  of ODE \eqref{phi-map-a}, in concrete cases we actually can expect that the limit
  $\Theta^A(x)$ exists and $\Theta^A(x) \in \Sigma$, for quite general states
  $x$ belonging to a set (or even $\mathbb{R}^d$) that is larger than $\Sigma^{(\delta)}$ 
  (see Remark~\ref{rmk-on-set-sigma-delta} and related discussion in Section~\ref{sec:discuss}). 
\end{remark}

In what follows we will make use of the derivatives of $F$, for $1 \le i,j \le
d$ and $x \in \mathbb{R}^d$,
\begin{align*}
%  \begin{aligned}
    (\nabla F(x))_{i} &= \frac{\partial F}{\partial x_i}(x) =
  \sum\limits_{\alpha=1}^k \xi_\alpha(x)\frac{\partial \xi_\alpha}{\partial
  x_i}(x), \\
    (\nabla^2 F(x))_{ij} &= \frac{\partial^2F}{\partial x_i\partial x_j}(x) =
    \sum\limits_{\alpha=1}^k\bigg[ \frac{\partial \xi_\alpha}{\partial
    x_i}(x)\frac{\partial \xi_\alpha}{\partial x_j}(x) +
    \xi_\alpha(x)\frac{\partial^2\xi_\alpha}{\partial x_i\partial x_j}(x)
    \bigg],
%  \end{aligned}
\end{align*}
and therefore, in particular,
\begin{equation}\label{eq:nablaF-Sigma}
\forall\, x\in\Sigma: \ \nabla F(x)=0, \quad \nabla^2 F(x) = \nabla\xi\nabla\xi^T.
\end{equation}

We will study the derivatives of $\Theta^A$ through the derivatives of the
flow $\varphi^A$. Taking derivatives in~\eqref{phi-map-a},  for $1 \le i, j \le d$ we obtain
\begin{equation} \label{flow-map-Grad-general}
  \begin{aligned}
    \frac{d}{ds} \frac{\partial \varphi^A_i}{\partial x_j}(x,s) &=
	-\sum_{r',i'=1}^d\Big((a-A)_{ir'}\frac{\partial^2
	F}{\partial x_{r'}\partial x_{i'}}+
	\frac{\partial a_{ir'}}{\partial x_{i'}}\frac{\partial F}{\partial x_{r'}} \Big)\big(\varphi^A(x,s)\big) \, \frac{\partial\, \varphi_{i'}^A}{\partial x_j} (x,s)\,, \\
    \frac{\partial \varphi^A_i}{\partial x_j}(x,0) &= \delta_{ij}\,, 
  \end{aligned}
\end{equation}
for any $(x,s) \in \mathbb{R}^{d}\times [0,+\infty)$.
Similarly, for $1 \le i, j,r \le d$, the second-order spatial derivatives
satisfy
%\begin{equation}
\begin{align}
      \frac{d}{ds} \frac{\partial^2 \varphi_i^A}{\partial x_j\partial
      x_r}(x,s) &= -\sum_{i',r'=1}^d\left((a-A)_{ir'}\frac{\partial^2
	F}{\partial x_{r'}\partial x_{i'}}
	+ \frac{\partial a_{ir'}}{\partial x_{i'}}\frac{\partial F}{\partial x_{r'}} 
	\right)\big(\varphi^A(x,s)\big)
      \frac{\partial^2\, \varphi_{i'}^A}{\partial x_j \partial x_r}(x,s) \nonumber \\
  & \quad  	-\sum_{i',j',r'=1}^d \left(
	(a - A)_{ir'}\frac{\partial^3 F}{\partial x_{r'}\partial x_{i'}\partial
	    x_{j'}}
	    +2 \frac{\partial a_{ir'}}{\partial x_{i'}}\frac{\partial^2 F}{\partial x_{r'}\partial x_{j'}}
  + \frac{\partial^2 a_{ir'}}{\partial x_{i'}\partial x_{j'}}\frac{\partial F}{\partial x_{r'}}
	    \right)\big(\varphi^A(x,s)\big) \label{flow-map-Hess-general}\\
	    & \hspace{1cm} \times \frac{\partial\, \varphi_{i'}^A}{\partial x_j} (x,s)
      \frac{\partial\, \varphi_{j'}^A}{\partial x_r} (x,s)\,, \qquad \forall~\,(x,s) \in \mathbb{R}^{d}\times [0,+\infty)\,,\nonumber\\
  \frac{\partial^2 \varphi_i^A}{\partial x_j\partial x_r}(x,0) &= 0 \,, \quad
  \forall~ x \in \mathbb{R}^d\,.\nonumber
\end{align}
%\end{equation}

In particular, when $x\in\Sigma$, using the fact that $\varphi^A(x,s)\equiv x$ for
all $s \ge 0$ and $\nabla F(x)=0$ (see~\eqref{eq:nablaF-Sigma}), in a compact notation
the ODE~\eqref{flow-map-Grad-general} reads  
\begin{equation}\label{flow-map-Grad}
  \begin{aligned}
    \frac{d}{ds}(\nabla\varphi^A(x,s))^T &= -
    (a-A)\nabla^2F (\nabla\varphi^A(x,s))^T \,, \quad \forall~s \in [0,+\infty)\,,\\
    \nabla\varphi^A(x, 0) &= I_d\,,
  \end{aligned}
\end{equation}
while the ODE \eqref{flow-map-Hess-general} simplifies to
\begin{equation}\label{flow-map-Hess}
\begin{aligned}
      \frac{d}{ds} \frac{\partial^2 \varphi_i^A}{\partial x_j\partial
      x_r}(x,s) &=   	-\sum_{i',j',r'=1}^d \left(
	(a - A)_{ir'}\frac{\partial^3 F}{\partial x_{r'}\partial x_{i'}\partial
	    x_{j'}}
	    +2 \frac{\partial a_{ir'}}{\partial x_{i'}}\frac{\partial^2 F}{\partial x_{r'}\partial x_{j'}}
	    \right) \frac{\partial\, \varphi_{i'}^A}{\partial x_j} (x,s)
      \frac{\partial\, \varphi_{j'}^A}{\partial x_r} (x,s) \\
      &\quad -\sum_{i',r'=1}^d(a-A)_{ir'}\frac{\partial^2 F}{\partial x_{r'}\partial x_{i'}} 
      \frac{\partial^2\, \varphi_{i'}^A}{\partial x_j \partial x_r}(x,s) \,,
      \qquad \forall~s \in [0,+\infty)\,,\\
  \frac{\partial^2 \varphi_i^A}{\partial x_j\partial x_r}(x,0) &= 0 \,.
\end{aligned}
\end{equation}
In~\eqref{flow-map-Grad}, $\nabla \varphi^A(x,s)$ is the
$d\times d$ matrix whose entries are $(\nabla
\varphi^A(x,s))_{ij}=\frac{\partial \varphi^A_j}{\partial x_i}(x,s)$, where $1
\le i,j \le d$. Furthermore we have omitted the $x$-dependence of the coefficients in~\eqref{flow-map-Grad}--\eqref{flow-map-Hess} when they are time independent.

With these preliminaries in the following we prove
Propositions~\ref{prop-1st-varphi}--\ref{prop2}, regarding the first and second-order derivatives of $\Theta^A$ respectively.
\begin{proof}[Proof of Propostion~\ref{prop-1st-varphi}]
Using \eqref{eq:nablaF-Sigma} and the definition~\eqref{phi-gamma} of $\Gamma$, we find
\begin{equation}
  \left((a-A)\nabla^2F\right)(x) = \left((a-A)\nabla\xi\nabla\xi^T\right)(x) = \Gamma(x),   
\quad x \in \Sigma\,.
  \label{eqn-i-a-d2f-gamma}
\end{equation}
  The
  corresponding (matrix) ODE \eqref{flow-map-Grad} becomes 
\begin{equation*}
  \begin{aligned}
    \frac{d}{ds}(\nabla\varphi^A(x,s))^T &= -\Gamma(x)
    (\nabla\varphi^A(x,s))^T, \quad s \ge 0\,,\\
    \nabla\varphi^A(x,0) &= I_d\,,
  \end{aligned}
\end{equation*}
which admits the solution
  \begin{equation*}
  (\nabla\varphi^A(x,s))^T =
\mathrm{e}^{-s\Gamma}= \sum_{i=0}^{+\infty}\frac{(-s\Gamma)^i}{i!}, \quad x
    \in \Sigma\,,~ s \ge 0\,.
\end{equation*}
Using~\eqref{phi-gamma} and a straightforward induction argument it follows that 
\begin{equation}
  i \ge 1: \  \Gamma^i = (a-A)\nabla \xi \Phi^{i-1}\nabla\xi^T, \
  \mbox{and}\quad  i \ge 0: \   \Gamma^i (a-A)\nabla \xi = (a-A)\nabla \xi \Phi^i.
  \label{eqn-relation-power-of-gamma-and-phi}
 \end{equation} 
  Using \eqref{eqn-relation-power-of-gamma-and-phi} and $\nabla\xi^T V = 0$ (recall~\eqref{condition-for-v}) we find
\begin{equation*}
\mathrm{e}^{-s\Gamma} V = V, \qquad \mathrm{e}^{-s\Gamma}(a-A)\nabla \xi = (a-A)\nabla\xi\,\mathrm{e}^{-s\Phi},
\end{equation*}
and therefore we can write in matrix form 
\begin{equation}\label{eq-exp-sGamma}
  \mathrm{e}^{-s\Gamma} 
  = 
  \begin{pmatrix}
    V& (a-A)\nabla\xi \mathrm{e}^{-s\Phi} 
  \end{pmatrix}
  \begin{pmatrix}
    V& (a-A)\nabla \xi 
  \end{pmatrix}^{-1}, \quad \forall~s\ge 0\,.
\end{equation}
Using the definition of $\Phi, \Pi$ in
  \eqref{phi-gamma} and \eqref{pi}, along with  $\nabla\xi^T V = 0$, we can directly verify 
\begin{equation*}
  \begin{aligned}
  \begin{pmatrix}
    V & (a-A)\nabla \xi 
  \end{pmatrix}^{-1}
    = & 
  \begin{pmatrix}
    \Pi^{-1} V^T(a-A)^{-1}\\
    \Phi^{-1} \nabla \xi^T
  \end{pmatrix}.
  \end{aligned}
\end{equation*}
Substituting this expression into~\eqref{eq-exp-sGamma} we find
\begin{equation*}
  (\nabla\varphi^A(x,s))^T = \mathrm{e}^{-s\Gamma}  
  = V\Pi^{-1} V^T(a-A)^{-1} + (a-A)\nabla\xi \mathrm{e}^{-s\Phi}
  \Phi^{-1}\nabla\xi^T\,, \quad s \ge 0\,,
\end{equation*}
while the last expression in \eqref{eqn-1st-varphi} follows using the
definition of $P$ in \eqref{projection-p}.
Since all the eigenvalues of $\Phi$ have positive real parts (recall
Lemma~\ref{lemma-on-matrix-pi-phi}), we can pass the limit $s\rightarrow +\infty$ which gives \eqref{eqn-of-theta-map}.
\end{proof}

When $A=0$ (which corresponds to reversible case) $\nabla\Theta^A$ is symmetric, which is not true in general when $A\neq 0$, as illustrated by the following simple example. 
\begin{example}
    Consider $\xi(x_1,x_2):=\frac{1}{2}(x_1^2+x^2_2-1): \R^2\rightarrow \R$ at
    $x=(1,0)^T$. Choose $a=I_2$ and $A=\left(
    \begin{smallmatrix}
      0 & 1\\
      -1 & 0
    \end{smallmatrix}\right)$. We have 
    $V = (0,1)^T$, $\nabla\xi=(1,0)^T$ and 
    using~\eqref{eqn-of-theta-map} we find 
    \begin{equation*}
      \nabla\Theta^A = 
    \begin{pmatrix}
      0 & -1\\
      0 & 1
    \end{pmatrix}.
    \end{equation*}
\end{example}

Now, we prove Proposition~\ref{prop2} concerning second derivatives of $\Theta^A$.
\begin{proof}[Proof of Proposition~\ref{prop2}]
Using identity \eqref{projection-sum-identity} 
we write
\begin{equation}  \label{sum-of-i1-i2}
  \begin{aligned}
  a_{jr}\frac{\partial^2 \Theta_i^A}{\partial x_j\partial x_r} 
      &= \big[V\Pi^{-1} V^T(a-A)^{-1}\big]_{i\l} 
      a_{jr} \frac{\partial^2 \Theta_\l^A}{\partial x_j\partial x_r}
    +\big[(a-A)\nabla\xi \Phi^{-1}\nabla\xi^T\big]_{i\l} a_{jr}\frac{\partial^2
    \Theta_\l^A}{\partial x_j\partial x_r}\\
  &= P_{i\l} 
      a_{jr} \frac{\partial^2 \Theta_\l^A}{\partial x_j\partial x_r}
    +\big[(a-A)\nabla\xi \Phi^{-1}\nabla\xi^T\big]_{i\l} a_{jr}\frac{\partial^2
    \Theta_\l^A}{\partial x_j\partial x_r}\\
    &=: \mathcal{I}_1 + \mathcal{I}_2\,,
\end{aligned}
\end{equation}
where the repeated indices $j,r,\ell$ are summed over $1$ to $d$. This
  Einstein's summation notation will be used throughout this proof. 
See Remark~\ref{rem:proof-mot} for the motivation behind this particular
  splitting in~\eqref{sum-of-i1-i2}, which plays a crucial role in the forthcoming calculations. 

  First, we compute the term $\mathcal{I}_1$ in \eqref{sum-of-i1-i2}. Using \eqref{eqn-i-a-d2f-gamma}
  and applying the variation of constants formula to the
  ODE~\eqref{flow-map-Hess}, we find, for $1 \le \l, j, r \le d$,
\begin{equation*}
  \begin{aligned}
    \frac{\partial^2 \Theta_\l^A}{\partial x_j\partial x_r} 
      =&
 \lim_{t\rightarrow +\infty} \frac{\partial^2 \varphi_\l^A}{\partial x_j\partial
      x_r}(x,t) \\
      =& -\lim_{t\rightarrow +\infty}\int_0^t
      \big[\mathrm{e}^{-(t-s)\Gamma}\big]_{\l\l'}\, \left[(a- A)_{\l'r'}\frac{\partial^3
      F}{\partial x_{r'}\partial x_{i'}\partial x_{j'}} 
+2 \frac{\partial a_{\l'r'}}{\partial x_{i'}}\frac{\partial^2 F}{\partial
    x_{r'}\partial x_{j'}}\right]
      \frac{\partial\, \varphi_{i'}^A}{\partial x_j} (x,s)
      \frac{\partial\, \varphi_{j'}^A}{\partial x_r} (x,s)\,ds\\
      =& -
      \left[(a- A)_{\l'r'}\frac{\partial^3
      F}{\partial x_{r'}\partial x_{i'}\partial x_{j'}} 
+2 \frac{\partial a_{\l'r'}}{\partial x_{j'}}\frac{\partial^2 F}{\partial
    x_{r'}\partial x_{i'}}\right]
      \lim_{t\rightarrow +\infty}\int_0^t
      \big[\mathrm{e}^{-(t-s)\Gamma}\big]_{\l\l'}\,
      \big[\mathrm{e}^{-s\Gamma}\big]_{i'j} \big[\mathrm{e}^{-s\Gamma}\big]_{j'r}
      \,ds\,.
  \end{aligned}
\end{equation*}
Note that we have switched the indices $i',j'$ in the second term in the sum above to simplify the following calculations. This is allowed since the indices $j,r$ can be interchanged since $\frac{\partial^2 \Theta_\l^A}{\partial x_j\partial x_r} =   \frac{\partial^2 \Theta_\l^A}{\partial x_r\partial x_j}$.   From the definition of $F$ in \eqref{fun-cap-f}, using $\xi(x) = 0$ on $x \in
  \Sigma$, we can compute, for $1 \le i', j', r' \le d$,
    \begin{equation}\label{eq:F-3der}
\frac{\partial^3 F}{\partial x_{r'}\partial x_{i'}\partial x_{j'}} 
  = \frac{\partial\xi_\alpha}{\partial
      x_{r'}}\frac{\partial^2\xi_\alpha}{\partial
      x_{i'}\partial x_{j'}}+\frac{\partial\xi_\alpha}{\partial
      \partial x_{i'}}\frac{\partial^2\xi_\alpha}{\partial x_{j'}\partial x_{r'}}+ 
\frac{\partial\xi_\alpha}{\partial x_{j'}}\frac{\partial^2\xi_\alpha}{\partial
      x_{i'}\partial x_{r'}}\,, \quad x \in \Sigma\,.
\end{equation}
    Computing  $\mathrm{e}^{-(t-s)\Gamma}$ via~\eqref{eqn-1st-varphi} and
    using $P(a-A)\nabla\xi = B\nabla\xi=0$, we find 
    \begin{equation*}
	 P\,\mathrm{e}^{-(t-s)\Gamma}=P\,,
    \end{equation*}
  and therefore using \eqref{eq:F-3der} we find
\begin{align*}  
%\label{eqn-i1-i11-i12}
%  \begin{aligned}
    \mathcal{I}_1  &= P_{i\l} 
    a_{jr}\frac{\partial^2 \Theta_\l^A}{\partial x_j\partial x_r} \\
      &= - P_{i\l}\, 
       \left[(a- A)_{\l r'}\frac{\partial^3
      F}{\partial x_{r'}\partial x_{i'}\partial x_{j'}} 
+2 \frac{\partial a_{\l r'}}{\partial x_{j'}}\frac{\partial^2 F}{\partial x_{r'}\partial x_{i'}}\right] 
a_{jr} \lim_{t\rightarrow +\infty}\int_0^t
  \big[\mathrm{e}^{-s\Gamma}\big]_{i'j} \big[\mathrm{e}^{-s\Gamma}\big]_{j'r} \,ds\\
      &= - P_{i\l} \bigg[
      (a-A)_{\l r'} \frac{\partial\xi_\alpha}{\partial x_{r'}}
    \frac{\partial^2\xi_\alpha}{\partial x_{i'}\partial x_{j'}}
    + (a-A)_{\l r'} \frac{\partial\xi_\alpha}{\partial x_{i'}}
    \frac{\partial^2\xi_\alpha}{\partial x_{j'}\partial x_{r'}}
     + (a-A)_{\l r'} \frac{\partial\xi_\alpha}{\partial x_{j'}}
    \frac{\partial^2\xi_\alpha}{\partial x_{i'}\partial x_{r'}}\\
    &\qquad\qquad + 2\frac{\partial a_{\l r'}}{\partial x_{j'}}
    \frac{\partial\xi_\alpha}{\partial x_{i'}}
    \frac{\partial\xi_\alpha}{\partial x_{r'}}\bigg]
    a_{jr}
\lim_{t\rightarrow +\infty} \int_0^t  \big[\mathrm{e}^{-s\Gamma}\big]_{i'j}
  \big[\mathrm{e}^{-s\Gamma}\big]_{j'r} \,ds\\
      &= -2 P_{i\l}
    \frac{\partial\xi_\alpha}{\partial x_{i'}}
    \left[(a-A)_{\l r'}\frac{\partial^2\xi_\alpha}{\partial x_{j'}\partial x_{r'}}
    + \frac{\partial a_{\l r'}}{\partial x_{j'}}
    \frac{\partial\xi_\alpha}{\partial x_{r'}}\right]
    a_{jr}
\lim_{t\rightarrow +\infty} \int_0^t  \big[\mathrm{e}^{-s\Gamma}\big]_{i'j}
  \big[\mathrm{e}^{-s\Gamma}\big]_{j'r} \,ds\\
      &= -2 P_{i\l}
    \left[(a-A)_{\l r'}\frac{\partial^2\xi_\alpha}{\partial x_{j'}\partial x_{r'}}
    + \frac{\partial a_{\l r'}}{\partial x_{j'}}
    \frac{\partial\xi_\alpha}{\partial x_{r'}}\right]
    \left[\nabla\xi^T \lim_{t\rightarrow +\infty}\int_0^t  \mathrm{e}^{-s\Gamma} a
    (\mathrm{e}^{-s\Gamma})^{T} \,ds\right]_{\alpha j'}\\
      &= -2 P_{i\l}
    \frac{\partial}{\partial
    x_{j'}}\left((a-A)_{\l r'}\frac{\partial\xi_\alpha}{\partial x_{r'}}
    \right)\,
    \left[\nabla\xi^T \lim_{t\rightarrow +\infty}\int_0^t  \mathrm{e}^{-s\Gamma} a
    (\mathrm{e}^{-s\Gamma})^{T} \,ds\right]_{\alpha j'}\,,
%  \end{aligned}
\end{align*}
where index $\alpha$ is summed over $1$ to $k$, and in the fourth equality
we have used $P_{i\l}(a-A)_{\l r'}\frac{\partial \xi_\alpha}{\partial
x_{r'}}=\big[P(a-A)\nabla\xi\big]_{i\alpha}=0$.
Using Lemma~\ref{lemma-identity} in Appendix~\ref{app:lemma-identity-proof} we find
\begin{align}  
%\label{eqn-of-i1}
%  \begin{aligned}
    \mathcal{I}_1 &= -P_{i\l}
    \frac{\partial}{\partial x_{j'}}\left((a-A)_{\l r'}\frac{\partial\xi_\alpha}{\partial x_{r'}}
    \right)
    \left[\Phi^{-1}\nabla\xi^T(a-A)\right]_{\alpha j'}\notag\\
&= -P_{i\l} \frac{\partial}{\partial x_{j'}}\left[(a-A)_{\l r'}\frac{\partial\xi_\alpha}{\partial x_{r'}}
    \left(\Phi^{-1}\nabla\xi^T(a-A)\right)_{\alpha j'}\right] \notag\\
    &\quad + \left[P_{i\l}
    (a-A)_{\l r'}\frac{\partial\xi_\alpha}{\partial x_{r'}}\right]
    \frac{\partial}{\partial x_{j'}} \big[\Phi^{-1}\nabla\xi^T(a-A)\big]_{\alpha j'} \label{eqn-of-i1}\\
&= -P_{i\l}
    \frac{\partial}{\partial x_{j'}}\Big[(a-A)\nabla\xi
    \Phi^{-1}\nabla\xi^T(a-A)\Big]_{\l j'} \notag\\
&= -P_{i\l}
    \frac{\partial}{\partial x_{j'}}\left[(I_d-P)(a-A) \right]_{\l j'}\notag \\
&= P_{i\l} \frac{\partial B_{\l j'}}{\partial x_{j'}}
    -P_{i\l}
    \frac{\partial a_{\l j'}}{\partial x_{j'}}  \,, \notag
%  \end{aligned}
\end{align}
where we have used $P(a-A) \nabla \xi =0$ and~\eqref{projection-p} to arrive at the third and the fourth equality respectively.

Next, we compute $\mathcal{I}_2$ in \eqref{sum-of-i1-i2}.
Differentiating the identity $\xi(\Theta^A(\cdot)) \equiv 0$ on
$\Sigma^{(\delta)}$ twice and using $\Theta^A(x) = x$ for $x \in \Sigma$, we obtain
\begin{equation}  \label{2nd-of-theta-by-1st}
  \frac{\partial\xi_\alpha}{\partial x_\l} \frac{\partial^2\Theta^A_\l}{\partial
  x_j\partial x_{j'}} = - \frac{\partial^2\xi_\alpha}{\partial x_\l\partial x_{\l'}} 
\frac{\partial\Theta^A_\l}{\partial x_j} 
  \frac{\partial\Theta^A_{\l'}}{\partial x_{j'}} \,, \quad \mbox{on}~\Sigma\,,
\end{equation}
  for $1 \le \alpha \le k$ and $1 \le j, j' \le d$.
Using \eqref{2nd-of-theta-by-1st} and the explicit expression of
$\nabla\Theta^A$~\eqref{eqn-of-theta-map} for $x \in \Sigma$, we find
\begin{align} 
    \mathcal{I}_2 &=  \big[(a-A)\nabla\xi\Phi^{-1} \nabla\xi^T\big]_{i\l} 
a_{jj'} \frac{\partial^2\Theta^A_\l}{\partial x_j\partial x_{j'}}\notag \\
    &=     \big[(a-A)\nabla\xi\Phi^{-1}\big]_{i\alpha} 
a_{jj'}\frac{\partial\xi_\alpha}{\partial x_\l} \frac{\partial^2\Theta^A_\l}{\partial
    x_j\partial x_{j'}} \notag\\
    &= - \big[(a-A)\nabla\xi\Phi^{-1}\big]_{i\alpha} 
\frac{\partial^2\xi_\alpha}{\partial x_\l\partial x_{\l'}} 
\frac{\partial\Theta^A_\l}{\partial x_j} 
  \frac{\partial\Theta^A_{\l'}}{\partial x_{j'}} a_{jj'} \notag\\
&= -\big[(a-A)\nabla\xi\Phi^{-1}\big]_{i\alpha} 
 \frac{\partial^2\xi_\alpha}{\partial x_\l\partial x_{\l'}} 
    (P a P^T)_{\l\l'} \label{eqn-of-i2} \\
    &= -\frac{1}{2}\big[(a-A)\nabla\xi\Phi^{-1}\big]_{i\alpha} 
 \frac{\partial^2\xi_\alpha}{\partial x_\l\partial x_{\l'}} 
    (B + B^T)_{\l\l'} \notag\\
    &= -\big[(a-A)\nabla\xi\Phi^{-1}\big]_{i\alpha} 
 \frac{\partial^2\xi_\alpha}{\partial x_\l\partial x_{\l'}} 
    B_{\l\l'}\notag \\
&= - \big[(a-A)\nabla\xi\Phi^{-1}\big]_{i\alpha} 
    \frac{\partial \big(\nabla\xi^TB\big)_{\alpha \l'}}{\partial x_{\l'}} 
    + \big[(a-A)\nabla\xi\Phi^{-1}\big]_{i\alpha} 
    \frac{\partial\xi_\alpha}{\partial x_\l} 
    \frac{\partial B_{\l\l'}}{\partial x_{\l'}} \notag\\
&=  \big[(a-A)\nabla\xi\Phi^{-1}\nabla\xi^T\big]_{i\l} 
    \frac{\partial B_{\l\l'}}{\partial x_{\l'}} \,,  \notag
\end{align}
where we have used~\eqref{eqn-of-theta-map} to arrive at the fourth equality,
the relation~\eqref{relation-p-b} to arrive at the fifth equality, and $\nabla\xi^T B=0$ to arrive at the final equality. 

Finally, summing up~\eqref{eqn-of-i1},~\eqref{eqn-of-i2} and using 
the definition of $P$ in \eqref{projection-p}, we find
\begin{align*}
  a_{jr}\frac{\partial^2 \Theta_i^A}{\partial x_j\partial x_r} 
  &= \mathcal{I}_1+\mathcal{I}_2 \\
  &= P_{i\l} \frac{\partial B_{\l j'}}{\partial x_{j'}}
    -P_{i\l} \frac{\partial a_{\l j'}}{\partial x_{j'}} 
    + \big[(a-A)\nabla\xi\Phi^{-1}\nabla\xi^T\big]_{i\l} 
    \frac{\partial B_{\l\l'}}{\partial x_{\l'}} \\
    &= \frac{\partial B_{ij}}{\partial x_{j}} - 
    P_{i\l} \frac{\partial a_{\l j'}}{\partial x_{j'}} \,.
\end{align*}
\end{proof}

In the following remark we discuss the proof techniques used to prove Proposition~\ref{prop2}. 
\begin{remark}\label{rem:proof-mot} 
  The starting point of the proof above is the splitting~\eqref{sum-of-i1-i2}, which we recall
\begin{equation}\label{eq:split-eq}  
  \sum_{j,r=1}^d a_{jr}\frac{\partial^2 \Theta_i^A}{\partial x_j\partial x_r} 
  = \sum_{\l, j, r=1}^d P_{i\l} 
      a_{jr} \frac{\partial^2 \Theta_{\l}^A}{\partial x_j\partial x_r}
    +\sum_{\l,j,r=1}^d \big[(a-A)\nabla\xi \Phi^{-1}\nabla\xi^T\big]_{i\l} a_{j r}\frac{\partial^2
    \Theta_{\l}^A}{\partial x_j\partial x_r}
    =: \mathcal{I}_1 + \mathcal{I}_2\,.
\end{equation}
This splitting is motivated by the identity, for $1 \le \alpha\le k$ and $1 \le j, j'\le d$,
\begin{equation*}
  \sum_{\l=1}^d 
  \frac{\partial\xi_\alpha}{\partial x_\l} \frac{\partial^2\Theta^A_\l}{\partial
  x_j\partial x_{j'}} = - 
  \sum_{\l, \l'=1}^d 
  \frac{\partial^2\xi_\alpha}{\partial x_\l\partial x_{\l'}} 
\frac{\partial\Theta^A_\l}{\partial x_j} 
  \frac{\partial\Theta^A_{\l'}}{\partial x_{j'}} \,, \quad \mbox{on}~\Sigma\,,
\end{equation*}
  which follows by differentiating $\xi(\Theta^A(\cdot)) \equiv 0$ on
  $\Sigma^{(\delta)}$
twice and using $\Theta^A(x) = x$ for $x \in \Sigma$. Using this identity we can rewrite the second term $\mathcal I_2$ (with second-order
  derivatives of $\Theta^A$) in~\eqref{eq:split-eq} as a product of first-order derivatives of $\Theta^A$
  for which we have derived explicit expressions in
  Proposition~\ref{prop-1st-varphi}. This considerably simplifies the analysis, 
  since we need to study the ODE~\eqref{flow-map-Hess} only for
  the first term $\mathcal I_1$. 
\end{remark}

We conclude this section with the proof of Lemma~\ref{lemma-rescaled}.
\begin{proof}[Proof of Lemma~\ref{lemma-rescaled}]
  Similar to \eqref{estimate-of-Lyapunov-f-kappa0}, using~\eqref{estimate-gradf-by-f} 
  and the ODE~\eqref{phi-map-a-kappa}, we find 
  \begin{equation}
    \begin{aligned}
    \frac{dF(\varphi^{A,\kappa}(x,s))}{ds} &= - \frac{1}{2} \Big(\nabla F^T (a-A)
    \nabla |\xi|^{2-\kappa}\Big)(\varphi^{A,\kappa}(x,s)) \\
    & = - \frac{1}{2} \Big(\nabla F^T (a-A)
    \nabla \big[(2F)^{\frac{2-\kappa}{2}}\big]\Big)(\varphi^{A,\kappa}(x,s)) \\
    &=- (2-\kappa) 2^{-1-\frac{\kappa}{2}} 
    \big(F^{-\frac{\kappa}{2}} \nabla F^T a \nabla F\big) (\varphi^{A,\kappa}(x,s))\\
    &\le - (2-\kappa)2^{-\frac{\kappa}{2}} 
    c_0 c_1 F^{1-\frac{\kappa}{2}}(\varphi^{A,\kappa}(x,s))\,,
    \end{aligned}
    \label{estimate-of-Lyapunov-f}
  \end{equation}
  where the first equality follows from \eqref{phi-map-a-kappa}, the second
  equality follows from the definition \eqref{fun-cap-f-intro} of $F$ and  
  the third equality follows from the chain rule and the antisymmetry of $A$. 
  For $\kappa>0$ and $x \in \Sigma^{(\delta)}$, after integrating the inequality above, we arrive at 
  $|\xi(\varphi^{A,\kappa}(x,s))|^\kappa \le
  |\xi(x)|^\kappa - 2^{-(1+\frac{\kappa}{2})}\kappa (2-\kappa) c_0c_1 s$, 
     for any $s \in [0, s_f]$, where $s_f =
    \frac{2^{1+\frac{\kappa}{2}}|\xi(x)|^\kappa}{\kappa (2-\kappa) c_0c_1}$.
   This implies that $\varphi^{A,\kappa}(x,\cdot)$ reaches the limiting state $\Theta^A(x)\in \Sigma$ within finite time $s_f$. To conclude, it is sufficient to observe that, by the definition of
  the neighbhourhood $\Sigma^{(\delta)}$ \eqref{sigma-delta}, we have $|\xi(x)|\le
  \delta$ for any starting state $x \in \Sigma^{(\delta)}$, and therefore $s_f \le \bar{s}:=\frac{2^{1+\frac{\kappa}{2}}\delta^\kappa}{\kappa (2-\kappa) c_0c_1}$. 

\begin{comment}
  The estimate \eqref{estimate-of-Lyapunov-f} clearly implies that
  $|\xi(\varphi^{A,\kappa}(x,s))|$ is non-increasing, and therefore 
  $\varphi^{A,\kappa}(x,\cdot)$ stays in $\Sigma^{(\delta)}$ for all $x \in \Sigma^{(\delta)}$.
  Concerning the estimate \eqref{estimate-distance-to-limit-kappa}, integrating both sides of \eqref{phi-map-a-kappa}, we obtain
  \begin{align} 
  \begin{split}
    \Big|\varphi^{A,\kappa}(x,s) - \Theta^A(x)\Big| =& \frac{2-\kappa}{2}
    \Big|\int_s^{s_f} \Big(|\xi|^{1-\kappa} \sum_{\alpha=1}^k
    \frac{\xi_\alpha}{|\xi|}(a-A)\nabla
    \xi_\alpha\Big)\big(\varphi^{A,\kappa}(x,u)\big)\,du \Big| \\
    \le & C \frac{2-\kappa}{2} 
    \Big|\int_s^{s_f} |\xi\big(\varphi^{A,\kappa}(x,u)\big)|^{1-\kappa}\,du \Big| \\
    \le & C \frac{2-\kappa}{2} 
    \Big|\int_s^{s_f} \Big(|\xi(x)|^\kappa - 2^{-(1+\frac{\kappa}{2})}\kappa
    (2-\kappa) c_0c_1 u\Big)^{\frac{1-\kappa}{\kappa}}\,du \Big| \\
    =& \frac{C 2^{\frac{\kappa}{2}}}{\kappa c_0c_1} \Big(|\xi(x)|^\kappa -
    2^{-(1+\frac{\kappa}{2})}\kappa (2-\kappa) c_0c_1
    s\Big)^{\frac{1}{\kappa}}\,,
\end{split}
\end{align}
  where the constant $C:=\sup_{x\in \Sigma^{(\delta)}} |\sum_{\alpha=1}^k
  \frac{\xi_\alpha}{|\xi|}(a-A)\nabla \xi_\alpha|$, the third line follows
  from \eqref{bound-of-xi-kappa}, and the last line follows from direct integration using change of variables.
\end{comment}
\end{proof}
\section{Comparison between non-reversible and reversible schemes}
\label{sec-compare}
In this section, we prove Proposition~\ref{prop-cmp-poincare-and-asym-variance}, which compares the non-reversible case ($A\neq 0$) to the reversible case ($A=0$). We make use of the following subspace of $L^2(\Sigma,\mu)$
\begin{equation}\label{def-L2-0mean}
  \mathcal H:=\Big\{g\in L^2(\Sigma,\mu):  \|g\|_{L^2(\Sigma,\mu)}=1,\,
  \E_{\cmu}[g] = 0\Big\}.
\end{equation}
First we prove the following useful result on the Dirichlet forms associated to the generators $\mathcal L$~\eqref{generator-l}, $\mathcal S$~\eqref{sym-antisym-of-l} and $\mathcal L_0$~\eqref{generator-l-reversible}.

\begin{lemma}  \label{lemma-compare-of-dirichlet-energy}
  For any $g\in\mathcal H\cap C^2(\Sigma)$ we have
  \begin{equation*}
  \int_{\Sigma} g(-\mathcal{L})g\,d\cmu
    = 
  \int_{\Sigma} g(-\mathcal{S})g\,d\cmu
    \ge 
  \int_{\Sigma} g(-\mathcal{L}_0)g\,d\cmu\,.
  \end{equation*}
\end{lemma}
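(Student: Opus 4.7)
My plan is to split the lemma into the equality and the inequality, and to reduce both to the integration-by-parts formula \eqref{integration-by-parts-f}, followed by a pointwise (on $\Sigma$) comparison of the Dirichlet integrands.

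For the equality, I would invoke Proposition~\ref{prop-generator-l}\ref{it:prop-on-l-3}, which directly gives
\begin{equation*}
\int_{\Sigma} g(-\mathcal{L})g\,d\cmu = \int_{\Sigma} g(-\mathcal{S})g\,d\cmu = \frac{1}{\beta}\int_{\Sigma} (B^{\mathrm{sym}}\nabla g)\cdot\nabla g\,d\cmu.
\end{equation*}
(Equivalently, one may use $\mathcal L = \mathcal A+\mathcal S$ together with the anti-symmetry of $\mathcal A$ from Proposition~\ref{prop-generator-l}\ref{it:prop-on-l-4}.) Since the reversible generator $\mathcal L_0$ corresponds to $A=0$, the same formula applied with $A=0$ (which is precisely \cite[Theorem 2.3]{Zhang20}, recovered in Proposition~\ref{prop-generator-l} by our remark at the end of Section~\ref{subsec-summary-of-useful-results}) yields
\begin{equation*}
\int_{\Sigma} g(-\mathcal{L}_0)g\,d\cmu = \frac{1}{\beta}\int_{\Sigma} (B_0\nabla g)\cdot\nabla g\,d\cmu.
\end{equation*}
Thus the desired inequality reduces to the pointwise claim $(B^{\mathrm{sym}}\nabla g)\cdot\nabla g \ge (B_0\nabla g)\cdot\nabla g$ on $\Sigma$.

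To prove this pointwise bound I would use the factorizations $B^{\mathrm{sym}}=PaP^T$ (Lemma~\ref{lemma-p-b}) and $B_0 = P_0 a P_0^T$, the latter verified by noting that $P_0 a\nabla\xi = a\nabla\xi - a\nabla\xi(\nabla\xi^T a\nabla\xi)^{-1}(\nabla\xi^T a\nabla\xi) = 0$, so that $P_0 a P_0^T = P_0 a = B_0$. Writing $\|\cdot\|_a^2 := (\cdot)^T a(\cdot)$, the claim becomes
\begin{equation*}
  \|P^T\nabla g\|_a^2 \ge \|P_0^T \nabla g\|_a^2 \quad \text{pointwise on } \Sigma.
\end{equation*}
The key observation is that both $P^T\nabla g$ and $P_0^T\nabla g$ belong to the affine subspace $\nabla g + \mathrm{range}(\nabla\xi)$: by definition of $P$ and $P_0$ one has $P^T\nabla g = \nabla g - \nabla\xi\,\Phi^{-T}\nabla\xi^T(a+A)\nabla g$ and $P_0^T\nabla g = \nabla g - \nabla\xi(\nabla\xi^T a\nabla\xi)^{-1}\nabla\xi^T a\nabla g$. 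Minimizing the convex quadratic $\mu\mapsto \|\nabla g-\nabla\xi\mu\|_a^2$ yields the unique minimizer $\mu_*=(\nabla\xi^T a\nabla\xi)^{-1}\nabla\xi^T a\nabla g$, which coincides with the coefficient in $P_0^T\nabla g$. Therefore $P_0^T\nabla g$ is the $a$-norm minimizer in that affine class, which gives the pointwise inequality and finishes the proof upon integration against $\mu$.

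The main obstacle I expect is the identification of $P^T\nabla g$ as a point of the same affine class $\nabla g+\mathrm{range}(\nabla\xi)$; this is straightforward from the explicit formula for $P$ in \eqref{projection-p}, but one must verify it cleanly so that the variational characterization of $P_0^T\nabla g$ applies. Once that is observed, the rest is a one-line convex-optimization argument and the inequality is pointwise, hence robust under integration against any measure, in particular $\mu$.
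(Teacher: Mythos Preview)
Your proof is correct and takes a genuinely different route from the paper. Both arguments start identically, reducing the equality to \eqref{integration-by-parts-f} and the inequality to the pointwise comparison $(B^{\mathrm{sym}}\nabla g)\cdot\nabla g \ge (B_0\nabla g)\cdot\nabla g$ on $\Sigma$. From there the paper proceeds algebraically: it passes to the $V$-representation $B=V\Pi^{-1}V^T$, so the claim becomes the matrix inequality $\tfrac12(\Pi^{-1}+\Pi^{-T})\succeq \Pi_0^{-1}$, which is then established by a fairly lengthy computation in the style of \cite{DuncanLelievrePavliotis16} (symmetric/antisymmetric splitting of $\Pi$, an explicit formula $(Q_1+Q_2^TQ_1^{-1}Q_2)^{-1}$ for the left-hand side, and a final comparison via an orthogonal-projection bound $R_1(R_1^TR_1)^{-1}R_1^T\preceq I_d$). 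Your argument is instead geometric: using $B^{\mathrm{sym}}=PaP^T$ and $B_0=P_0aP_0^T$, you recognise both $P^T\nabla g$ and $P_0^T\nabla g$ as points of the affine subspace $\nabla g+\mathrm{range}(\nabla\xi)$, and identify $P_0^T\nabla g$ as the unique $a$-norm minimiser there via the first-order condition. This is markedly shorter and conceptually cleaner; it makes transparent why the reversible projection is optimal among all oblique projections with the same range. The paper's approach, on the other hand, yields an explicit matrix inequality in the $(d-k)\times(d-k)$ coordinates that could in principle be reused, and connects directly to the operator-level manipulations later used for the asymptotic-variance comparison; but for the Dirichlet-form inequality itself, your variational argument is the more economical one.
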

\begin{proof}
Using~\eqref{integration-by-parts-f}, for any $g\in\mathcal H\cap C^2(\Sigma)$ we find
\begin{equation}
  \begin{aligned}
     \int_{\Sigma} g(-\mathcal{L})g\,d\cmu
    = \int_{\Sigma} g (-\mathcal{S})g\,d\cmu 
    = \frac{1}{\beta} \int_{\Sigma} (B^{sym} \nabla g)\cdot \nabla g\,d\cmu
    = \frac{1}{2\beta} \int_{\Sigma} (V^T \nabla g)^T
     (\Pi^{-1} + \Pi^{-T})V^T\nabla g\,d\cmu\,,
  \end{aligned}
  \label{eqn-g-l-g-energy}
\end{equation}
  where the final equality follows from~\eqref{matrix-p-b} and the definition of $B^{\mathrm{sym}}$~\eqref{symmetric-parts-of-b}.
  In particular, for $A=0$, \eqref{eqn-g-l-g-energy} implies
  \begin{equation}
%\forall g\in L_0^2(\Sigma, \cmu)\cap C^2(\Sigma) : \  
\int_{\Sigma} g(-\mathcal{L}_0)g\,d\cmu = \frac{1}{\beta} \int_{\Sigma}
    (V^T \nabla g)^T \Pi_0^{-1} V^T\nabla g\,d\cmu\, ,
    \label{eqn-g-l-g-energy-0}
  \end{equation}
  where  $\Pi_0 =V^Ta^{-1}V=\Pi_0^T$ (recall~\eqref{pi}). 

  Comparing~\eqref{eqn-g-l-g-energy} and~\eqref{eqn-g-l-g-energy-0}, it
  suffices to prove that
  \begin{equation}    \label{inequality-to-prove-poincare}
    \frac{1}{2}(\Pi^{-1} + \Pi^{-T}) \succeq \Pi_0^{-1}\,,
\end{equation}
i.e.\ $A-B$ is positive semi-definite.

  Let us define $Q_1 := \Pi^{sym} = \frac{1}{2} (\Pi + \Pi^T)$ 
  and $Q_2 := \Pi^{asym} = \frac{1}{2} (\Pi - \Pi^T)$. Since $\Pi =
  V^T(a-A)^{-1}V$ (see \eqref{pi}), we find
\begin{equation}  \label{q1-q2}
Q_1 = V^T(a-A)^{-1}a (a+A)^{-1} V\,,\quad Q_2 = V^T(a-A)^{-1} A (a+A)^{-1} V \,.
\end{equation}
It is easy to see that $Q_1\in \mathbb{R}^{(d-k)\times (d-k)}$ is positive
  definite (invertible) since $(a-A)$ is invertible and $V$ has linearly independent columns. For the term on the left hand side
  of \eqref{inequality-to-prove-poincare}, using $Q_1^T=Q_1$ and $Q_2^T=-Q_2$, we compute
\begin{align}
     \frac{1}{2} (\Pi^{-1} + \Pi^{-T}) 
    &=  \frac{1}{2} \big[(Q_1 + Q_2)^{-1} + (Q_1 - Q_2)^{-1}\big]\notag\\
    &=  \frac{1}{2} Q_1^{-\frac{1}{2}}\Big[\Big(I_{d-k} +Q_1^{-\frac{1}{2}}
    Q_2Q_1^{-\frac{1}{2}}\Big)^{-1} + \Big(I_{d-k} -Q_1^{-\frac{1}{2}}
    Q_2Q_1^{-\frac{1}{2}}\Big)^{-1}\Big]Q_1^{-\frac{1}{2}}\notag\\
    &=  \frac{1}{2} Q_1^{-\frac{1}{2}}\Big[\Big(I_{d-k} +Q_1^{-\frac{1}{2}}
    Q_2Q_1^{-\frac{1}{2}}\Big)^{-1}\Big(I_{d-k} -Q_1^{-\frac{1}{2}}
    Q_2Q_1^{-\frac{1}{2}}\Big)^{-1} \Big(I_{d-k} -Q_1^{-\frac{1}{2}}
    Q_2Q_1^{-\frac{1}{2}}\Big)\notag\\
    & \ \ \    +\Big(I_{d-k}+Q_1^{-\frac{1}{2}} Q_2Q_1^{-\frac{1}{2}}\Big) \Big(I_{d-k}+Q_1^{-\frac{1}{2}}
    Q_2Q_1^{-\frac{1}{2}}\Big)^{-1}\Big(I_{d-k}-Q_1^{-\frac{1}{2}} Q_2Q_1^{-\frac{1}{2}}\Big)^{-1} 
     \Big]Q_1^{-\frac{1}{2}} \notag \\
    &=  Q_1^{-\frac{1}{2}}\Big(I_{d-k} +Q_1^{-\frac{1}{2}}
    Q_2Q_1^{-\frac{1}{2}}\Big)^{-1}\Big(I_{d-k} -Q_1^{-\frac{1}{2}}
    Q_2Q_1^{-\frac{1}{2}}\Big)^{-1}Q_1^{-\frac{1}{2}}\notag\\
    &=  \Big[Q_1^{\frac{1}{2}} \Big(I_{d-k} -Q_1^{-\frac{1}{2}}
    Q_2Q_1^{-\frac{1}{2}}\Big)\Big(I_{d-k} +Q_1^{-\frac{1}{2}}
    Q_2Q_1^{-\frac{1}{2}}\Big)Q_1^{\frac{1}{2}}\Big]^{-1}\notag\\
    &=  (Q_1 - Q_2 Q_1^{-1}Q_2)^{-1}\notag\\
    &=  (Q_1 + Q_2^T Q_1^{-1}Q_2)^{-1}\,.\label{pi-sym-by-q1-q2}
\end{align}
  The above calculations are fairly standard, see for
  instance the proof of~\cite[Lemma $3$]{DuncanLelievrePavliotis16}.
Therefore to arrive at~\eqref{inequality-to-prove-poincare}, it is sufficient  to compare $Q_1 + Q_2^T Q_1^{-1}Q_2$ with $\Pi_0$.

  To continue, we define $R_1 = a^{\frac{1}{2}}(a+A)^{-1}V\in \mathbb{R}^{d\times (d-k)}$ and
  $R_2 = a^{-\frac{1}{2}}A (a+A)^{-1}V\in \mathbb{R}^{d\times (d-k)}$. Using
  \eqref{q1-q2} and the identity 
\begin{equation*}
   a = (a-A) a^{-1} (a+A) + Aa^{-1}A
\end{equation*}
along with $A=-A^T$ we find
\begin{equation*}
  \begin{aligned}
  Q_1 & = R_1^TR_1\\
&= V^T(a-A)^{-1} a (a+A)^{-1} V\\
    &= V^T(a-A)^{-1} \big[(a-A) a^{-1} (a+A) + Aa^{-1}A\big] (a+A)^{-1} V\\
    &= V^T a^{-1} V + V^T(a-A)^{-1} Aa^{-1}A (a+A)^{-1} V\\
    &= \Pi_0 - V^T(a-A)^{-1} A^Ta^{-1}A (a+A)^{-1} V\\
    &= \Pi_0 - R_2^TR_2\,,
  \end{aligned}
\end{equation*}
and $Q_2 = R_1^TR_2$. For the right hand side of~\eqref{pi-sym-by-q1-q2} we find
\begin{equation}
%  \begin{aligned}
     Q_1 + Q_2^TQ_1^{-1}Q_2 
     =  \Pi_0 - R_2^TR_2 + R_2^TR_1(R_1^TR_1)^{-1} R_1^TR_2
     =  \Pi_0 + R_2^T\big[R_1(R_1^TR_1)^{-1} R_1^T - I_d\big] R_2\,.
%  \end{aligned}
  \label{q1-q2-inv-q2-sum}
\end{equation}

In what follows we will show that 
\begin{equation}  \label{cmp-r1-inv-r1-le-i}
R_1(R_1^TR_1)^{-1} R_1^T \preceq I_d\,,
\end{equation}
Using~\eqref{pi-sym-by-q1-q2}-\eqref{cmp-r1-inv-r1-le-i}, we find 
$\left(\frac{1}{2} (\Pi^{-1} + \Pi^{-T})\right)^{-1} \preceq \Pi_0$. 
Since $Q_1$ is positive definite,~\eqref{pi-sym-by-q1-q2} implies that $\frac{1}{2} (\Pi^{-1} + \Pi^{-T})$ is positive definite as well. Therefore $\left(\frac{1}{2} (\Pi^{-1} + \Pi^{-T})\right)^{-1} \preceq \Pi_0$ implies that $\frac{1}{2} (\Pi^{-1} + \Pi^{-T}) \succeq \Pi_0^{-1}$, which is the required result (see~\eqref{inequality-to-prove-poincare}).

Now we prove~\eqref{cmp-r1-inv-r1-le-i}. Define $R^{\perp}_1 :=
a^{-\frac{1}{2}} (a-A)\nabla \xi\in \mathbb{R}^{d\times k}$. Since
$R_1^TR_1^{\perp} = V^T \nabla\xi =0$, the columns of $R_1,R^{\perp}_1$ are
linearly independent vectors that span $\R^d$ (recall the definition of $V$).
Therefore, any $u \in \mathbb{R}^d$ can be written as $u = R_1 v_1+ R^{\perp}_1 v_2$, for some
$v_1 \in \mathbb{R}^{d-k}$ and $v_2 \in \mathbb{R}^{k}$.
We have $R_1^T u = (R_1^T R_1)v_1$ and $|u|^2 = |R_1v_1|^2 + |R_1^{\perp}v_2|^2$. Using these facts,  for any $u\in \mathbb{R}^{d}$ we compute
\begin{equation*}
  \begin{aligned}
   u^T \big[R_1(R_1^TR_1)^{-1}R_1^T - I_d\big] u
  &=  (R_1^Tu)^T (R_1^TR_1)^{-1}R_1^Tu - |u|^2\\
  &=  v_1^T R_1^TR_1 (R_1^TR_1)^{-1}R_1^TR_1 v_1 - |u|^2\\
    &= |R_1v_1|^2 - \big(|R_1v_1|^2 + |R^{\perp}_1v_2|^2\big) \\
    &= - |R^{\perp}_1v_2|^2 \\
    &\le 0\,,  
  \end{aligned}
\end{equation*}
which implies \eqref{cmp-r1-inv-r1-le-i}. 
\end{proof}

We are ready to prove Proposition~\ref{prop-cmp-poincare-and-asym-variance}.
\begin{proof}[Proof of Proposition~\ref{prop-cmp-poincare-and-asym-variance}]
\noindent\ref{it:cmp-poincare-constants}
  Note that the Poincar{\'e} constants in~\eqref{poincare-inequality-mu1} and~\eqref{poincare-inequality-mu1-reversible} can be characterised as 
\begin{equation}  \label{def-spectral-gap}
  K = \inf_{g\in\mathcal H\cap C^2(\Sigma)} \int_{\Sigma} g(-\mathcal{L})g\, d\cmu\,,\qquad K_0 = \inf_{g\in\mathcal H\cap C^2(\Sigma)} \int_{\Sigma} g(-\mathcal{L}_0)g\, d\cmu\,, 
\end{equation}
respectively, where $\mathcal H$ is defined in~\eqref{def-L2-0mean}. 
%in \eqref{def-spectral-gap} are taken over all
%$g\in L^2_0(\Sigma,\cmu)$ such that the numerators are finite.
  Lemma~\ref{lemma-compare-of-dirichlet-energy} and~\eqref{def-spectral-gap}
  immediately imply that $K \ge K_0$.

\noindent\ref{it:cmp-asymptotic-variance}
Assume without loss of generality that $f\in\mathcal H$. Using the definition~\eqref{eqn-chi} and  applying the integration by parts formula~\eqref{integration-by-parts-f} we have 
\begin{equation}
  \chi_{f}^2 =
  2\int_{\Sigma} (-\mathcal{L}\psi)\psi\,d\cmu
  = 2\int_{\Sigma} f\, (-\mathcal{L})^{-1}f\,d\cmu
  = 2\int_{\Sigma} f\, [(-\mathcal{L})^{-1}]^{\textrm{sym}} f\,d\cmu\,,
  \label{eqn-chi-by-l-inv}
\end{equation}
  where $(-\mathcal{L})^{-1}$ denotes the operator inverse of $-\mathcal{L}$,
 $[(-\mathcal{L})^{-1}]^{\mathrm{sym}}$ is the symmetric part of
  $(-\mathcal{L})^{-1}$, and we have used that $\psi$ is the solution to the
  Poisson equation~\eqref{eqn-poisson-equation} along with $\bar f=
  \E_{\cmu}[f] = 0$ since $f\in\mathcal H$. The invertibility of $\mathcal L$
  follows by standard arguments as in~\cite{DuncanLelievrePavliotis16}.
Similarly, for the asymptotic variance \eqref{eqn-chi-reversible}
  corresponding to $A=0$, we have   
\begin{equation}
  \chi_{f,0}^2 = 2\int_{\Sigma} (-\mathcal{L}_0\psi_0)\psi_0\,d\cmu = 
 2\int_{\Sigma} f\, (-\mathcal{L}_0)^{-1}\,f\,d\cmu\,,
  \label{eqn-chi-by-l-inv-0}
\end{equation}
where $\psi_0$ is the solution to the Poisson equation
  \eqref{eqn-poisson-equation-reversible}, $\mathcal{L}_0$ is defined in
  \eqref{generator-l-reversible}, 
  and $(-\mathcal{L}_0)^{-1}$ is self-adjoint. 
  Using the decomposition \eqref{generator-l-decomp}, and $-\mathcal L^*=-\mathcal S^*-\mathcal A^*=-\mathcal S+\mathcal A$ we find 
\begin{align}
  [(-\mathcal{L})^{-1}]^{\textrm{sym}} 
= \frac{1}{2}[(-\mathcal{L})^{-1} + (-\mathcal{L}^*)^{-1}]
=
  \frac{1}{2}\big[(-\mathcal{S}-\mathcal{A})^{-1} +
  (-\mathcal{S}+\mathcal{A})^{-1}\big]
  = \big[-\mathcal{S} + \mathcal{A}^*(-\mathcal{S})^{-1}\mathcal{A}\big]^{-1}\,,
\end{align}
where $\mathcal A^*$ is the adjoint operator in $L^2(\Sigma,\mu)$ (recall
  Proposition~\ref{prop-generator-l}). Here the final equality above can be computed using a similar calculation as \eqref{pi-sym-by-q1-q2}.
  Applying Lemma~\ref{lemma-compare-of-dirichlet-energy}, we obtain 
  \begin{equation*}
-\mathcal{S} + \mathcal{A}^*(-\mathcal{S})^{-1}\mathcal{A} \succeq -\mathcal{S} \succeq -\mathcal{L}_0\,,
  \end{equation*}
where $\succeq$ denotes the Loewner ordering between self-adjoint operators. Therefore we find 
  \begin{equation}
    [(-\mathcal{L})^{-1}]^{\textrm{sym}} \preceq (-\mathcal{L}_0)^{-1}\,.
    \label{eqn-l-inv-and-l0-inv}
  \end{equation}
  The conclusion is obtained after combining \eqref{eqn-l-inv-and-l0-inv} with
  \eqref{eqn-chi-by-l-inv}--\eqref{eqn-chi-by-l-inv-0}.
\end{proof}

\section{Numerical example}\label{sec-numerical}

\begin{figure}[b!]
\includegraphics[width=0.8\textwidth]{./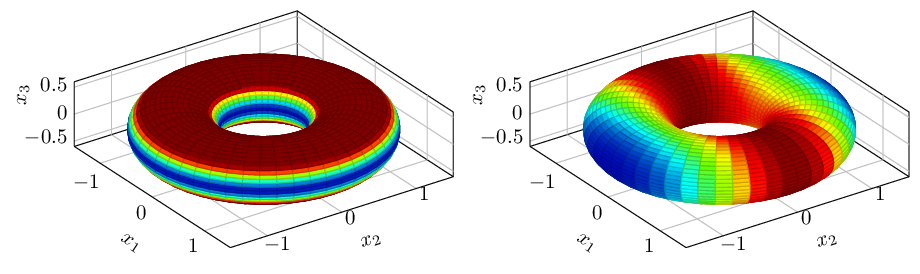}
\centering
  \caption{Left: potential profile $U_1$ in the first test. Right: potential
  profile $U_2$ in the second test. 
  There are two regions where the value of $U_2$ is small. In both plots, blue and red colors
  correspond to small and large values of the potentials respectively.
  \label{fig-ex1-torus-potential}}
\end{figure}
\begin{figure}[b!]
\centering
\begin{subfigure}{0.25\textwidth}
\includegraphics[width=1.0\textwidth]{./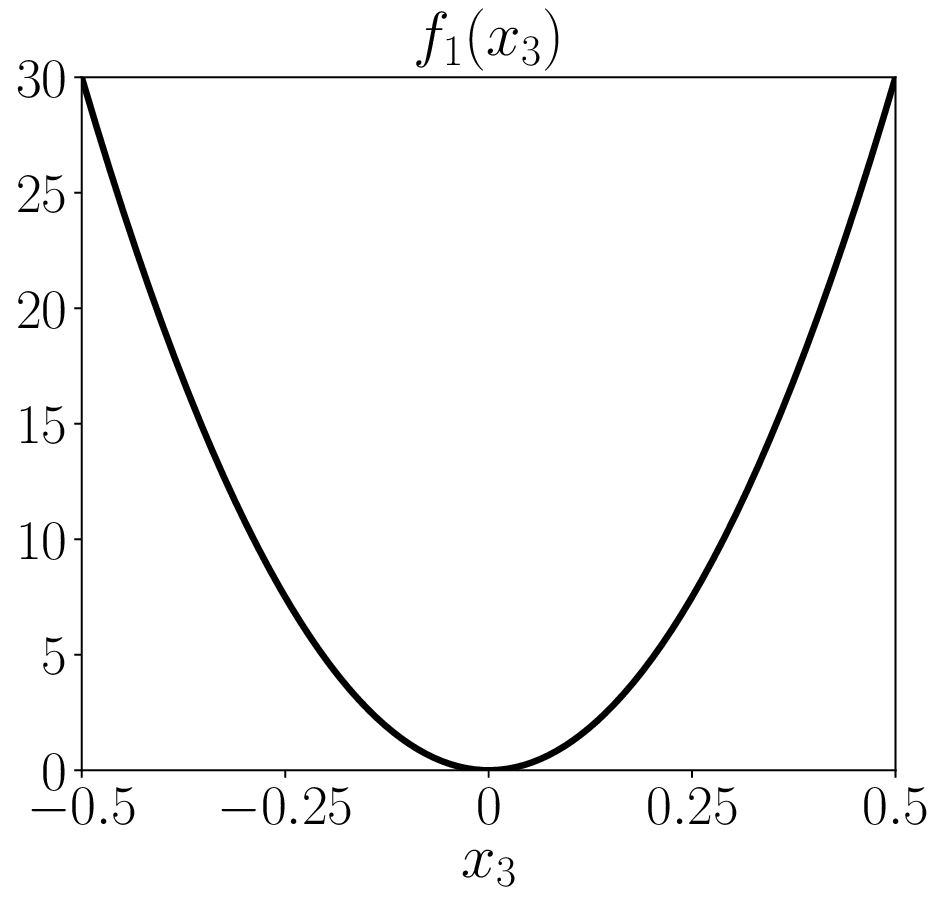}
\end{subfigure}
\begin{subfigure}{0.24\textwidth}
\includegraphics[width=1.0\textwidth]{./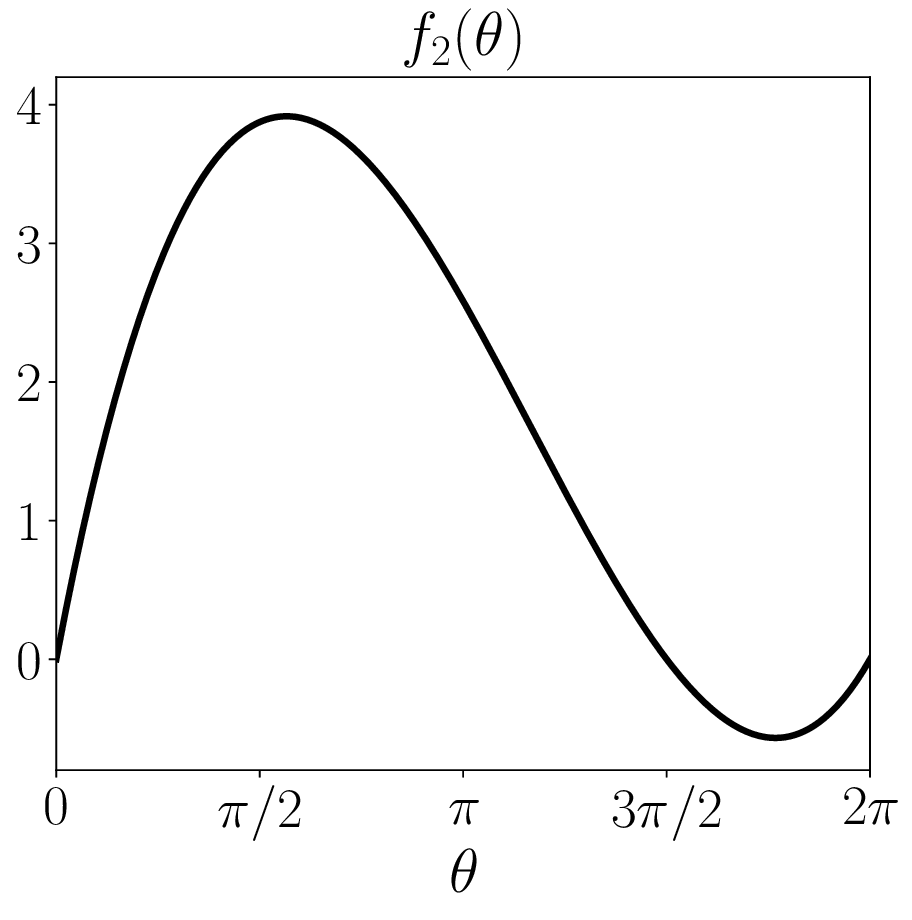}
\end{subfigure}
  \caption{Profiles of functions $f_1$ and $f_2$ used in the first and second tests respectively.
  \label{fun-f-plot}}
\end{figure}
As an illustrative example, we consider the sampling on a two-dimensional
torus $\Sigma$ as a submanifold of $\mathbb{R}^3$~\cite{hmc-submanifold-tony,multiple-projection-mcmc-submanifolds}.
Specifically, we define $\Sigma$ as the zero level set of the polynomial  
\begin{equation}
  \xi(x) = \left(R^2-r^2 + x_1^2+x_2^2+x_3^2\right)^2 - 4R^2\left(x_1^2 +
  x_2^2\right)\,,\quad x=(x_1,x_2,x_3)^T\in \mathbb{R}^3\,,
  \label{ex1-torus-xi}
\end{equation}
for some $0 < r < R$, i.e.\ $\Sigma=\big\{x\in \mathbb{R}^3\,|\,\xi(x) = 0\big\}$.
 Below we will use the following parametrisation of $\Sigma$,
\begin{align}
  x_1 = (R+r\cos \phi) \cos\theta, \quad
  x_2 = (R+r\cos \phi) \sin\theta, \quad
  x_3 = r \sin\phi,
  \label{ex1-polar}
\end{align}
where $(\phi, \theta) \in [0, 2\pi)^2$. In particular, it
can be verified that the normalised surface measure of $\Sigma$ and the norm
of gradient $|\nabla\xi|$ in variables $\theta,\phi$ are  given by
\begin{align}
  \nu_\Sigma(d\phi\,d\theta) = \frac{1}{(2\pi)^2} \left(1 + \frac{r}{R}\cos\phi\right)\,d\phi\,d\theta\,,
  \label{ex1-sigma}
\end{align}
and $|\nabla\xi| = 8R^2r(1+\frac{r}{R}\cos\phi)$.
As a result, the probability measure $\mu$~\eqref{mu1-intro} with potential $U$ is 
\begin{align}
  \label{ex1-mu}
  \cmu(d\phi\,d\theta) = \frac{1}{Z} \mathrm{e}^{-\beta U} d\phi\,d\theta\,,
\end{align}
where $Z$ is the normalisation constant.
In the numerical experiment below, we fix $R=1.0$, $r=0.5$ in \eqref{ex1-sigma}
and study the scheme~\eqref{scheme-non-reversible-numerical} (i.e.\ the numerical version of the scheme~\eqref{scheme-non-reversible}) on two different tests.
Also, as discussed in Remark~\ref{rmk-on-choice-of-noise}, we will ignore the
boundedness assumption on the random variables $\bm{\eta}^{(\ell)}$ in \eqref{conditions-on-eta} and will simply use independent and identically distributed standard Gaussian random variables.

\begin{figure}[t!]
\centering
\begin{subfigure}{0.40\textwidth}
\includegraphics[width=1.0\textwidth]{./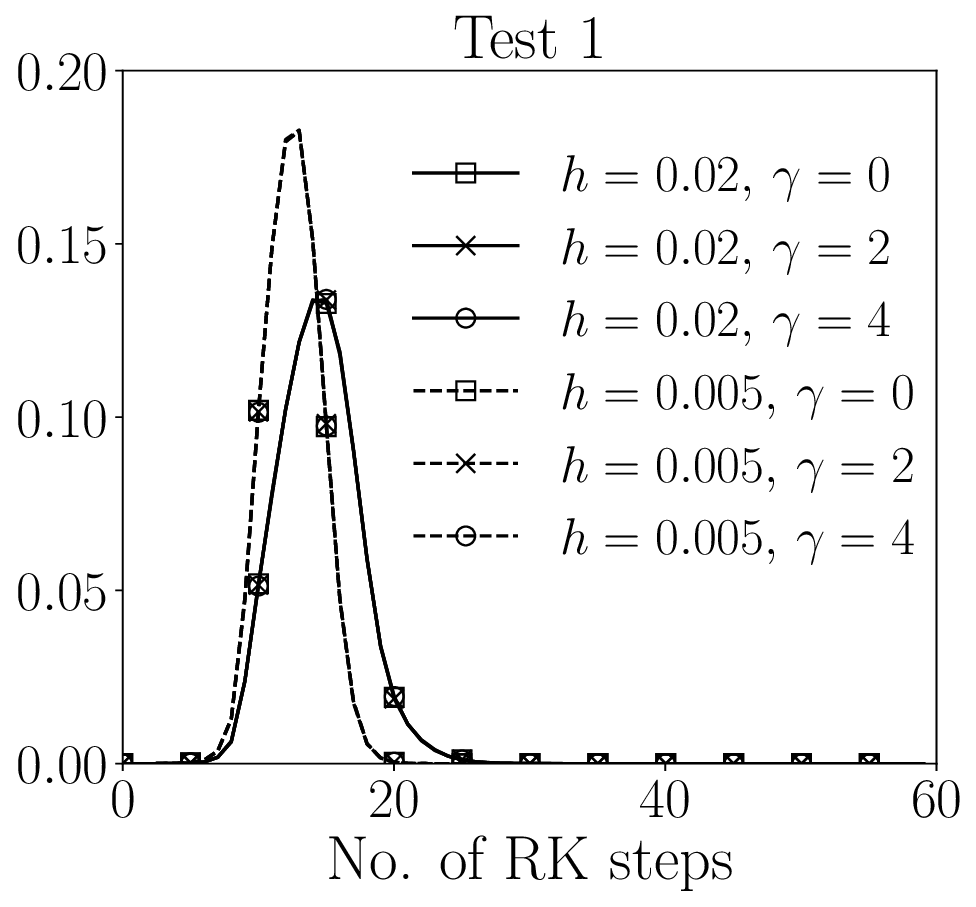}
\end{subfigure}
\begin{subfigure}{0.40\textwidth}
\includegraphics[width=1.0\textwidth]{./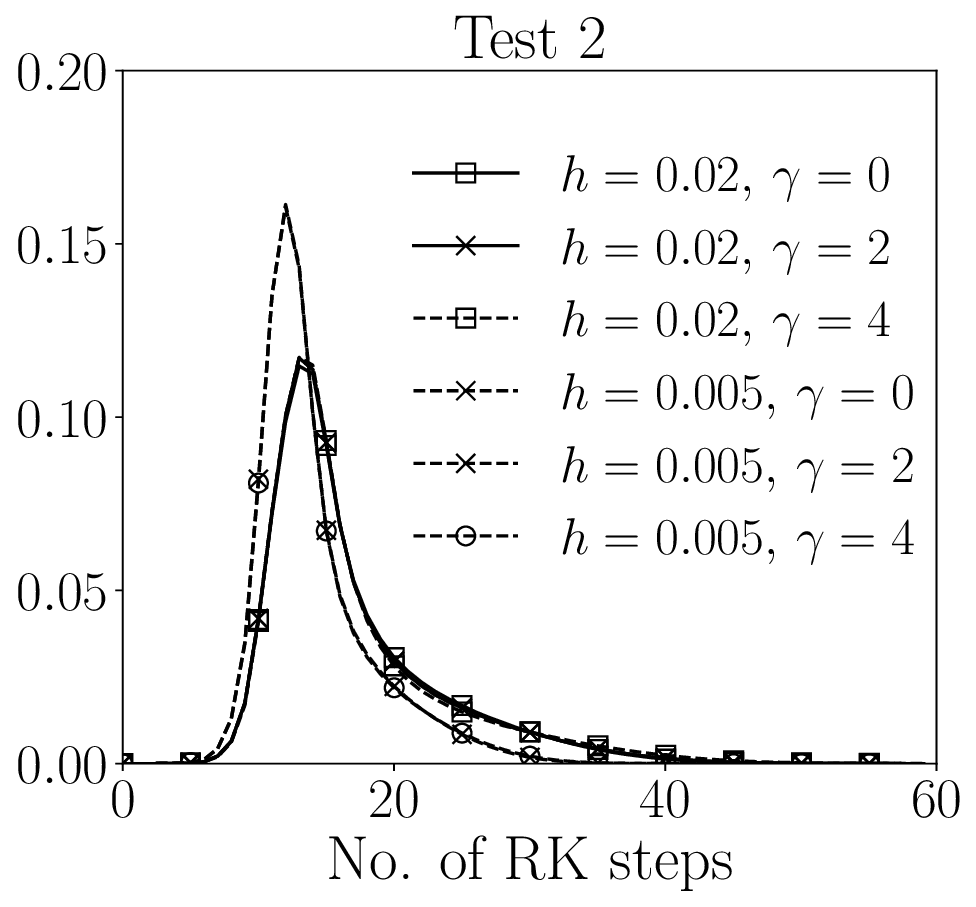}
\end{subfigure}
  \caption{Distributions of numbers of Runge-Kutta steps required in the
  first test~\eqref{ex:first-test} (left panel) and in the second test~\eqref{ex:sec-test} (right panel) for different
  $h$ and $\gamma$ (see \eqref{mat-a-bar}). The step-sizes in the Runge-Kutta
  method are $\Delta t=0.005, 0.01$ in the first and second test respectively. In each case, the distribution is calculated based on the sampling of states in one of $10$ Monte Carlo runs.  \label{dist-plot-of-RK-steps}}
\end{figure}

\begin{figure}[t!]
\centering
\begin{subfigure}{0.32\textwidth}
\includegraphics[width=1.0\textwidth]{./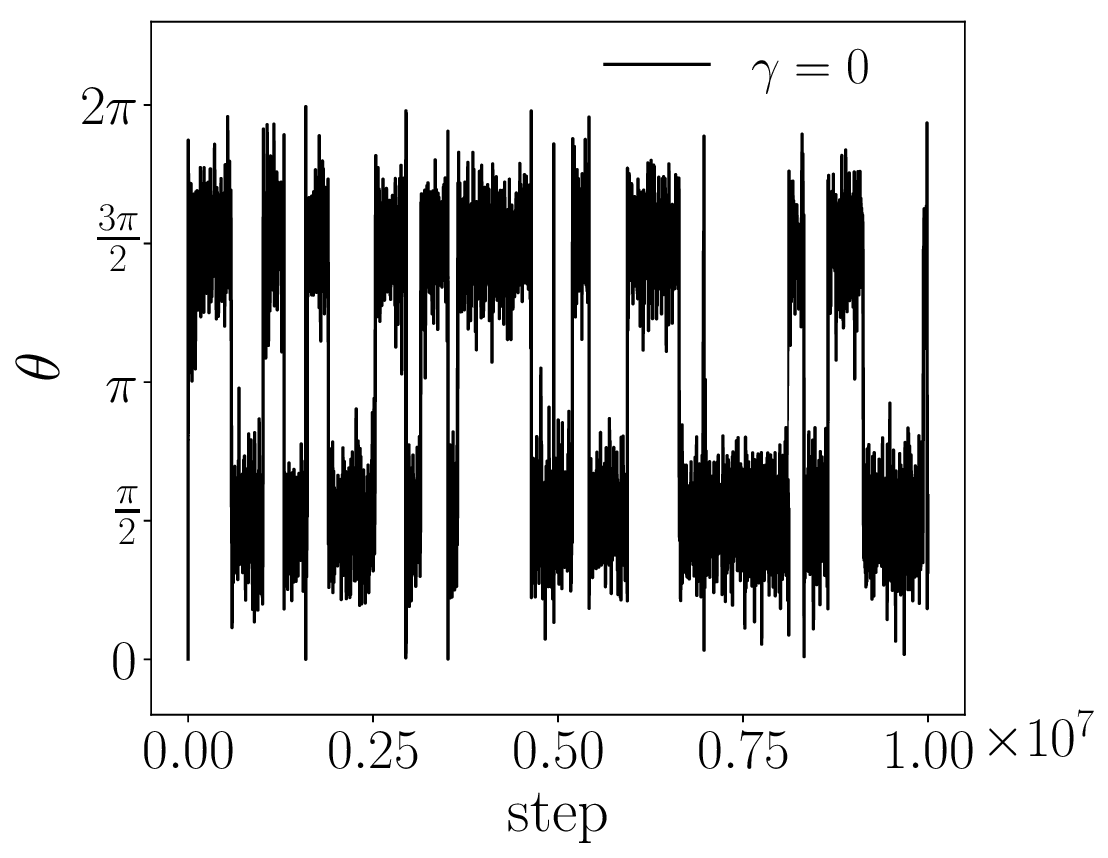}
\end{subfigure}
\begin{subfigure}{0.32\textwidth}
\includegraphics[width=1.0\textwidth]{./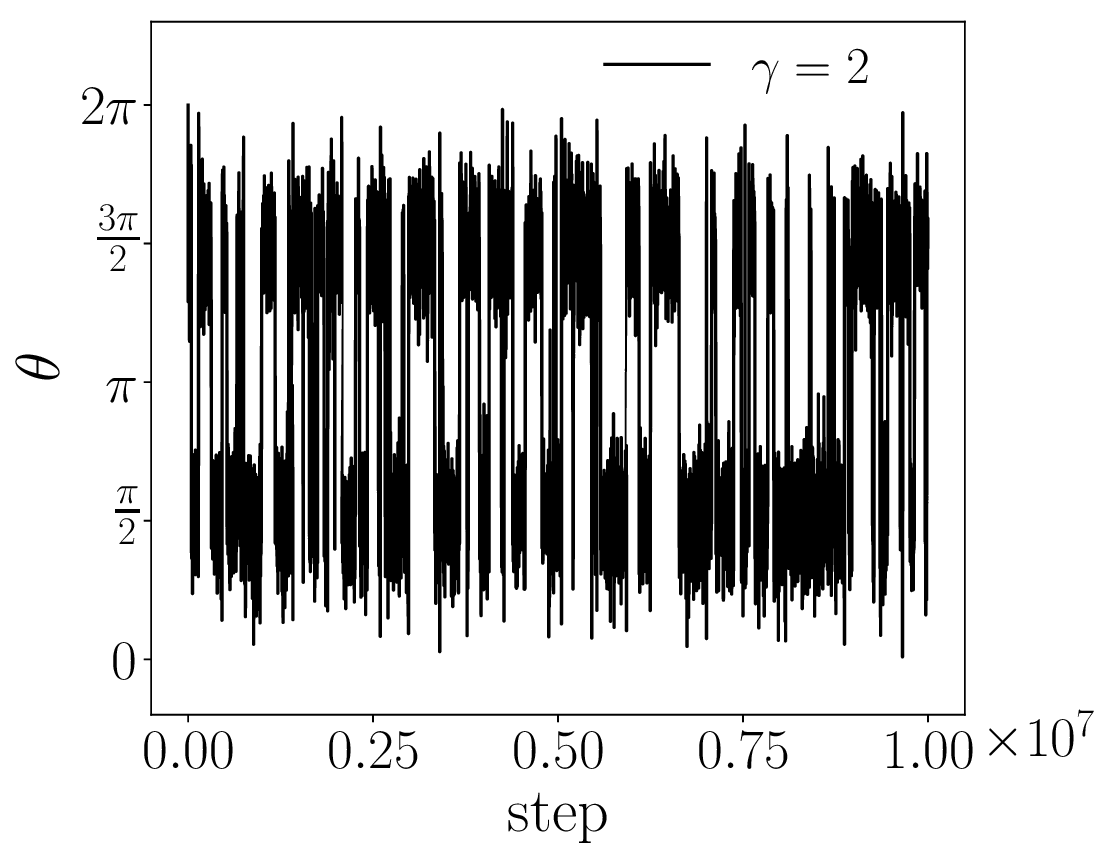}
\end{subfigure}
\begin{subfigure}{0.33\textwidth}
\includegraphics[width=1.0\textwidth]{./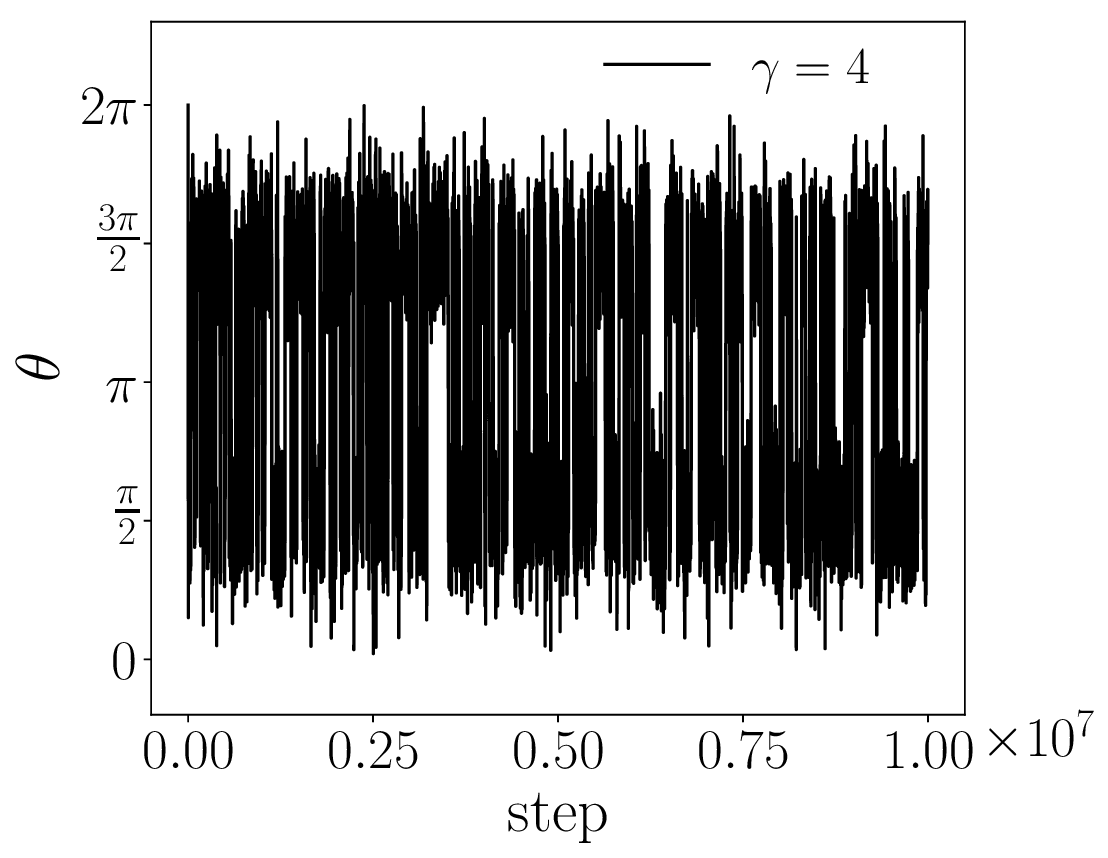}
\end{subfigure}
  \caption{Typical trajectories of angle $\theta$ in the second test~\eqref{ex:sec-test} are plotted for $\gamma=0,2,4$
  respectively. In comparison to $\gamma=0$ (left panel), the  
  transitions between the two low-potential regions of $U_2$ (see the right panel in Figure~\ref{fig-ex1-torus-potential}) occur more frequently when
  $\gamma=2, 4$ are used (middle, right panels).\label{traj-angle-plot}}
\end{figure}

In the first test, we choose 
\begin{equation}\label{ex:first-test}
\beta=20, \  U(x) = U_1(x_3) = 10 x_3^2, \  
f(x) = f_1(x_3) = 30 \Bigl(\frac{x_3}{r}\Bigr)^2,
\end{equation}
%$\beta=20$, $U(x) = U_1(x_3) = 10 x_3^2$ and $f(x) = f_1(x_3) = 30 (\frac{x_3}{r})^2$, 
i.e.\ both $U$ and $f$ depend only on
$x_3$. See the left panels in Figure~\ref{fig-ex1-torus-potential} and Figure~\ref{fun-f-plot} for the
profiles of $U$ and $f$ respectively. In this test the asymptotic variance
$\chi_f^2$ \eqref{eqn-chi} is small thanks to the choice of the Gaussian
potential $U_1$ (the Poincar{\'e} constant of $\mu$ is large). This allows us
to focus on the estimation error in terms of step-size $h$ and to compare it with the error bound \eqref{mean-square-error-numerical-version}.

\begin{figure}[t!]
\centering
\begin{subfigure}{0.45\textwidth}
\includegraphics[width=1.0\textwidth]{./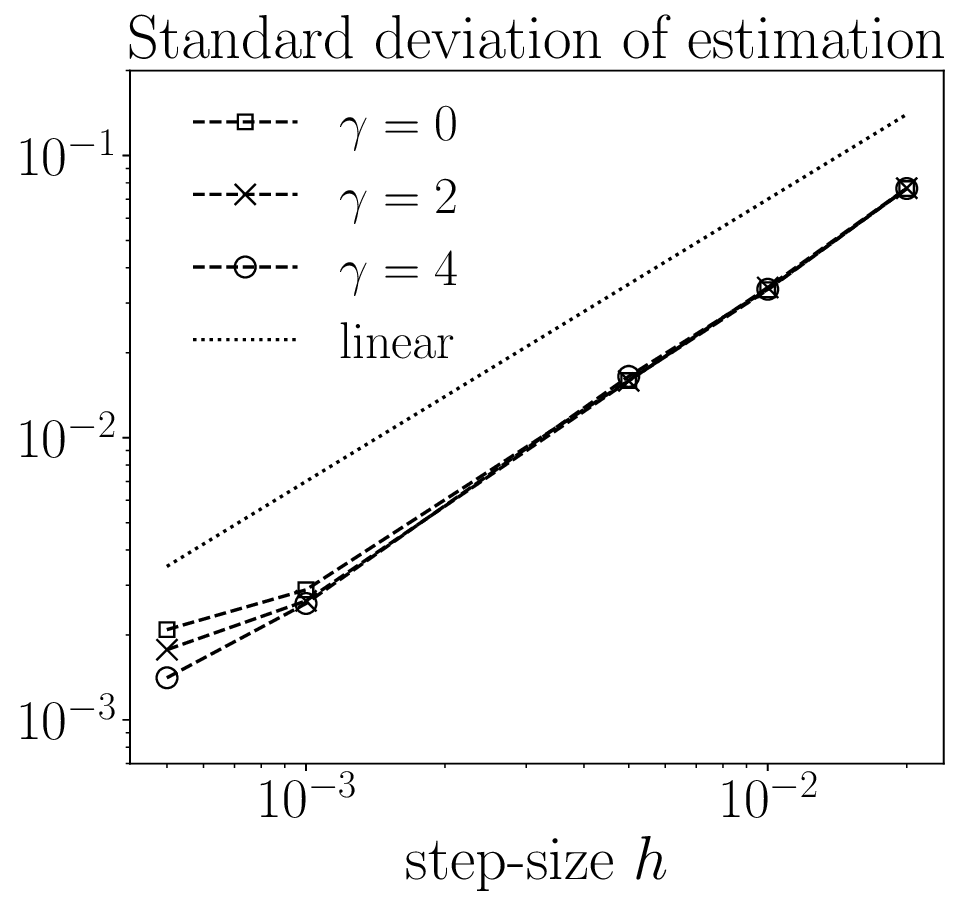}
\centering
\end{subfigure}
\begin{subfigure}{0.435\textwidth}
\includegraphics[width=1.0\textwidth]{./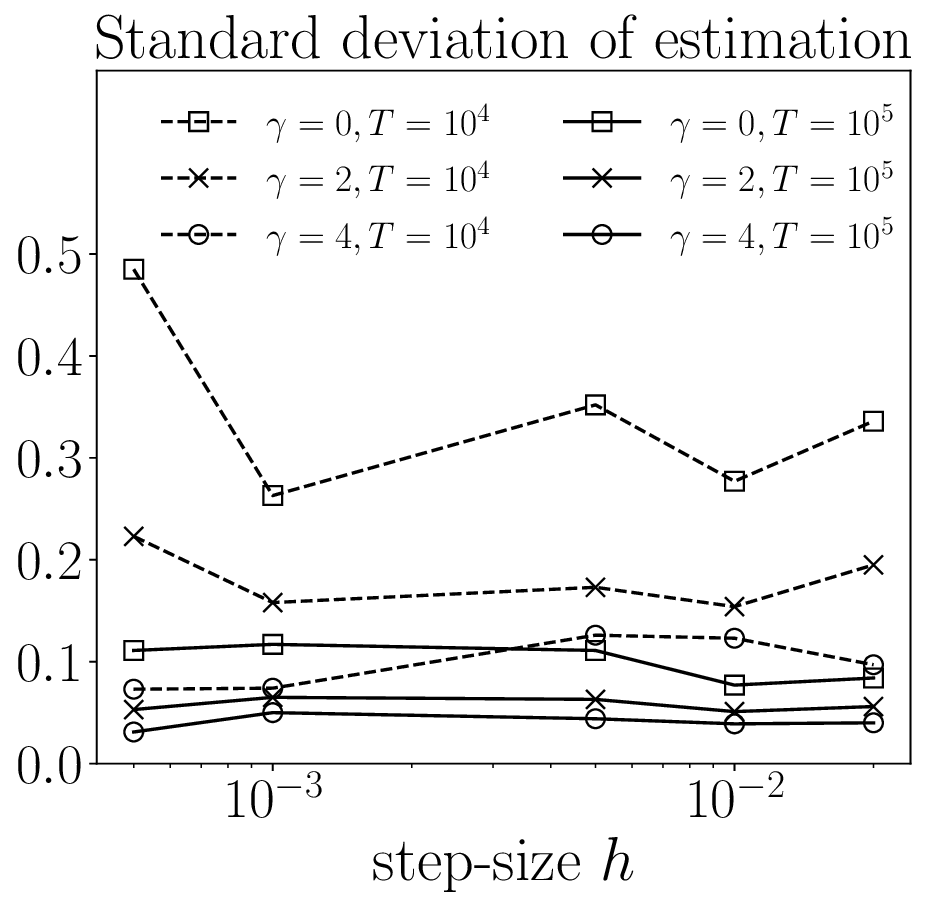}
\end{subfigure}
\vspace{-0.3cm}
  \caption{Left: Standard deviations of the mean
  $\E_\mu[f]$ based on $10$ runs of Monte Carlo estimations for the first test~\eqref{ex:first-test} 
   are plotted for $\gamma \in \{0, 2, 4\}$ with fixed total time $T=10^4$ in each run. The dotted line indicates linear scaling of standard
     deviation with respect to step-size $h$. Right: Standard
   deviations of the mean $\E_\mu[f]$ based on $10$ runs of Monte Carlo estimations for the second test~\eqref{ex:sec-test} are plotted for
   $\gamma\in \{0,2,4\}$ with two choices for total time $T=10^{4},10^{5}$. 
The standard deviation decreases when $T$ increases and also when $\gamma=2$ or $\gamma=4$ are used.
In both tests, for each step-size $h$ in \eqref{ex1-step-size}, each of the
  $10$ runs of the scheme~\eqref{scheme-non-reversible-numerical} gives a random estimations of $\E_\mu[f]$ and the standard deviation of these
  $10$ estimations with respect to the mean value calculated from direct calculation is plotted. \label{std-dev-plot}}
\end{figure}

The true value of $\E_\mu[f]$ is $0.303$ by direct numerical calculations. To test the estimation error in terms of step-size $h$ we use 
\begin{equation}  \label{ex1-step-size}
  h=\,2.0\times 10^{-2}, \quad 1.0 \times 10^{-2},\quad 5.0\times 10^{-3},\quad 1.0\times
  10^{-3},\quad 5.0\times 10^{-4}\,.
\end{equation}
We fix $a=I_3$ and choose the matrix $A$ to be 
\begin{equation}
  A =
\begin{pmatrix}
  0 & \gamma & 0\\
  -\gamma & 0 & 0\\
  0 & 0 & 0
\end{pmatrix}\,,
  \label{mat-a-bar}
\end{equation}
where $\gamma\in \{0, 2, 4\}$. When $\gamma\neq 0$, a rotational
effect is introduced in the plane spanned by $x_1$ and $x_2$. For each
step-size $h$ in \eqref{ex1-step-size} and each $\gamma \in \{0, 2, 4\}$, we estimate the mean value $\E_\mu[f]$ for $10$ runs using the scheme~\eqref{scheme-non-reversible-numerical}, where in each run $n$ states are sampled up to the fixed total time $T=nh=10^{4}$. In the projection step, the ODE \eqref{phi-map-a-kappa} 
 is solved with $\kappa = 0.5$ using the fourth-order Runge-Kutta method. 
 In order to focus on the effect of the step-size $h$, a step-size
 $\Delta t = 0.005$ is used initially and is halved each time $|\xi|$ increases during the ODE integration.
The convergence criterion is set to $|\xi| < \eps_{tol}=10^{-7}$.
Each run of the scheme~\eqref{scheme-non-reversible-numerical} gives an  estimation of $\E_\mu[f]$ and the standard deviation of $10$ runs 
(i.e.\ the standard deviation of $10$  estimations with respect to the
mean value $\E_\mu[f]=0.303$ from direct calculation) for each $h$ in
\eqref{ex1-step-size} is shown in the left panel in Figure~\ref{std-dev-plot}.
We can observe that the standard deviations decrease linearly as $h$ decreases
for each $\gamma \in \{0,2,4\}$ (the fluctuation when $h=5\times 10^{-4}$ is visible in the left
panel in Figure~\ref{std-dev-plot} due to the use of finite time $T$ and the logarithmic scale of the $y$-axis). Note that this is accordant with the error
estimate \eqref{mean-square-error-numerical-version} in Theorem~\ref{thm-numerical-theta} (see also \eqref{mean-square-error-thm2} and \eqref{mean-square-estimate-poincare}) . In fact, the term involving $\frac{\chi_f^2}{T}$ in \eqref{mean-square-error-numerical-version} is negligible for $T=10^{4}$, since 
in this test the asymptotic variance $\chi_f^2$ \eqref{eqn-chi} is small thanks to the choice of the Gaussian potential $U_1$ (the Poincar{\'e} constant of $\mu$ is large). The term involving $(\Delta t)^p$ in \eqref{mean-square-error-numerical-version} is also
small since we use the small step-size $\Delta t=0.005$ and we have $p=4$ for the fourth-order Runge-Kutta method. 
Therefore, after taking square root  one observes that the dominant term in
the error bound \eqref{mean-square-error-numerical-version} is linear in $h$. In each case, about $12$ Runge-Kutta steps are required on average in order to achieve the
convergence criterion when solving ODE~\eqref{phi-map-a-kappa}. As $h$ increases, the number of Runge-Kutta steps increases slightly, due to
the fact that the intermediate states $\widetilde{x}^{(\l+\frac{1}{2})}$ in
the scheme~\eqref{scheme-non-reversible-numerical} move further away from the torus (see left panel in Figure~\ref{dist-plot-of-RK-steps}). 
It is interesting to note that for fixed step-size $h$ in \eqref{ex1-step-size} the numbers of Runge-Kutta steps used to meet the convergence criterion are very
similar for different $\gamma\in \{0,2,4\}$ (i.e.\ different $A$),
and this is in fact consistent with Lemma~\ref{lemma-rescaled} (also see
\eqref{estimate-of-Lyapunov-f} in its proof), where the estimate of the time to reach the submanifold is independent of $A$.
In this test, because the matrix $A$ in \eqref{mat-a-bar} does not affect the system along the direction $x_3$ significantly, the sampling errors with $\gamma=2,4$ are
close to the results with $\gamma=0$ (left panel in Figure~\ref{std-dev-plot}).

\begin{table}[h!]
  \centering
  \begin{tabular}{c|cccccccc}
    \hline
    $\gamma$ & $h$ & $n$ &  mean $f$ & std.\ $f$ & $|\xi(\widetilde{x}^{(\ell+\frac{1}{2})})|$ &
    RK-S & RK-Err & frequency $\theta$-trans. \\
    \hline
    \multirow{5}{*}{$0$} & $2.0\times 10^{-2}$ & $5\times 10^{6}$ &  $1.89$ &
    $0.08$ & $2.1 \times 10^{-1}$ & $16.7$ & $2.5\times 10^{-7}$ & $3.0\times 10^{-3}$  \\
    & $1.0\times 10^{-2}$ & $1\times 10^{7}$ & $1.96$ & $0.08$ & $1.4 \times
    10^{-1}$ & $15.2$ & $1.9\times 10^{-7}$ & $2.9\times 10^{-3}$  \\
    & $5.0\times 10^{-3}$ & $2\times 10^{7}$ & $1.91$ & $0.11$ & $1.0 \times
    10^{-1}$ & $14.1$ &$1.3\times 10^{-7}$ & $3.0\times 10^{-3}$  \\
    & $1.0\times 10^{-3}$ & $1\times 10^{8}$ & $1.87$ & $0.12$ & $4.5 \times
    10^{-2}$ & $11.9$ &$1.5\times 10^{-7}$ & $3.0\times 10^{-3}$  \\
    & $5.0\times 10^{-4}$ & $1\times 10^{8}$ & $1.87$ & $0.11$ & $3.2 \times
    10^{-2}$ & $11.2$ &$1.2\times 10^{-7}$ & $2.9\times 10^{-3}$  \\
    \hline
    \hline
    \multirow{5}{*}{$2$} & $2.0\times 10^{-2}$ & $5\times 10^{6}$ & $1.90$ & $0.06$ & $2.1 \times
    10^{-1}$ & $16.8$ &$4.3\times 10^{-6}$ & $1.1\times 10^{-2}$  \\
    & $1.0\times 10^{-2}$ & $1\times 10^{7}$& $1.91$ & $0.05$ & $1.4 \times
    10^{-1}$ & $15.3$ &$2.2\times 10^{-6}$ & $1.1\times 10^{-2}$  \\
    & $5.0\times 10^{-3}$ & $2\times 10^{7}$& $1.88$ & $0.06$ & $1.0 \times
    10^{-1}$ & $14.1$ &$1.0\times 10^{-6}$ & $1.0\times 10^{-2}$  \\
    & $1.0\times 10^{-3}$ & $1\times 10^{8}$& $1.87$ & $0.06$ & $4.5 \times
    10^{-2}$ & $11.9$ &$1.1\times 10^{-6}$ & $1.1\times 10^{-2}$  \\
    & $5.0\times 10^{-4}$ & $1\times 10^{8}$& $1.89$ & $0.05$ & $3.2 \times
    10^{-2}$ & $11.3$ &$1.7\times 10^{-6}$ & $1.1\times 10^{-2}$  \\
    \hline
    \hline
    \multirow{5}{*}{$4$} & $2.0\times 10^{-2}$ & $5\times 10^{6}$ & $1.89$ & $0.04$ & $2.2 \times
    10^{-1}$ & $17.0$ &$1.5\times 10^{-5}$ & $7.2\times 10^{-2}$  \\
    & $1.0\times 10^{-2}$ & $1\times 10^{7}$& $1.90$ & $0.04$ & $1.5 \times
    10^{-1}$ & $15.4$ &$9.6\times 10^{-6}$ & $3.8\times 10^{-2}$  \\
    & $5.0\times 10^{-3}$ & $2\times 10^{7}$& $1.89$ & $0.04$ & $1.0 \times
    10^{-1}$ & $14.1$ &$1.3\times 10^{-5}$ & $3.3\times 10^{-2}$  \\
    & $1.0\times 10^{-3}$ & $1\times 10^{8}$& $1.90$ & $0.05$ & $4.5 \times
    10^{-2}$ & $11.9$ &$1.4\times 10^{-5}$& $3.3\times 10^{-2}$  \\
    & $5.0\times 10^{-4}$ & $1\times 10^{8}$& $1.92$ & $0.03$ & $3.2 \times
    10^{-2}$ & $11.2$ &$1.1\times 10^{-5}$ & $3.3\times 10^{-2}$  \\
    \hline
  \end{tabular}
  \caption{ Monte Carlo estimations of $\E_{\mu}[f]$ in the second test. 
  The true value of $\E_{\mu}[f]$ found by direct calculation is $1.923$. Different step-sizes $h$ are used in the
  scheme~\eqref{scheme-non-reversible-numerical} to estimate $\E_{\mu}[f]$,
  for different choices of $A$ in \eqref{mat-a-bar} with $\gamma\in \{0,2,4\}$.
  For each choice of $\gamma$ and $h$, $10$ runs of Monte Carlo estimations are performed, by
  sampling states using the scheme~\eqref{scheme-non-reversible-numerical} up to total time $T=nh=10^{5}$. 
 Column ``mean $f$'' displays the average of $10$ Monte Carlo runs and column
  ``std.\ $f$'' displays the standard deviations of the $10$ Monte Carlo runs with respect to the true mean $1.923$. Column
  ``$|\xi(\widetilde{x}^{(\ell+\frac{1}{2})})|$'' contains the average value of
  $|\xi(\widetilde{x}^{(\ell+\frac{1}{2})})|$, where $\widetilde{x}^{(\ell+\frac{1}{2})}$ are the
  intermediate states in the scheme~\eqref{scheme-non-reversible-numerical}
  before the projection step. Column ``RK-S'' displays the average number of
  Runge-Kutta steps required to reach the convergence criterion when solving the ODE~\eqref{phi-map-a-kappa} in the projection step.
  Column ``RK-Err'' displays the average numerical error in solving the
  ODE~\eqref{phi-map-a-kappa}, estimated by comparing the numerical solution
  of the ODE with $\Delta t=0.01$ to the solution with $\Delta t=5\times 10^{-5}$, starting from $5000$ different intermediate states $\widetilde{x}^{(\ell+\frac{1}{2})}$.
  Column ``frequency $\theta$-trans.'' displays the frequency of transitions (i.e.\ total number of transitions divided by total time $T$) for the sampled states  between the two low-potential regions
  $\{x\in \Sigma\,,\,|\theta - \frac{\pi}{2}|\le \frac{\pi}{4}\}$ and $\{x\in \Sigma\,,\,|\theta -
\frac{3\pi}{2}|\le \frac{\pi}{4}\}$. }
\label{tab-test2}
\end{table}
To demonstrate the gain by introducing non-reversibility in the
scheme~\eqref{scheme-non-reversible-numerical} and also to investigate the
errors due to the use of finite time $T$ and the numerical evaluation of the projection (see the error bound~\eqref{mean-square-error-numerical-version}), in the second test we choose a bimodal potential with 
\begin{equation}\label{ex:sec-test}
U(x) = U_2(x) =\cos^2\theta, \  f(x) = f_2(\theta)=\frac{1}{6} \theta \Bigl(\theta - \frac{3\pi}{2}\Bigr)(\theta - 2\pi), 
\end{equation}
where $\theta\in [0,2\pi)$ is the angle in \eqref{ex1-polar}. See the right panels in Figure~\ref{fig-ex1-torus-potential} and Figure~\ref{fun-f-plot} for the plots of $U$ and $f$ respectively.
 As the figure depicts, there are two distinct regions on the torus, corresponding to $\theta$ close
 to $\frac{\pi}{2}$ or $\frac{3\pi}{2}$, where the value of $U$ is small. We set $\beta = 10$, in which case 
$\mu$ \eqref{ex1-mu} satisfies Poincar{\'e} inequality with a much smaller
constant compared to the previous test due to the bimodality of $U$ and the
asymptotic variance $\chi_f^2$ \eqref{eqn-chi} is considerably larger. Consequently the sampling of $\mu$ is more difficult. 

The true value of $\E_\mu[f]$ is $1.923$, which is calculated by direct
integration. As in the first test, we test the scheme
\eqref{scheme-non-reversible-numerical} by estimating the mean $\E_{\mu}[f]$
using different step-sizes $h$ in \eqref{ex1-step-size}, where $a=I_3$ and $A$
is chosen in \eqref{mat-a-bar} with $\gamma\in\{0,2,4\}$. To investigate the effect of the finite sampling time $T$, we set the total simulation time to be either $T=10^{4}$ or $T=10^{5}$ (the sample size $n$ is determined by the choices of $h$ and $T$ since $T=nh$). For each choice of
$h$, $\gamma$ and $T$, we estimate the mean $\E_\mu[f]$ for $10$ runs using the
scheme~\eqref{scheme-non-reversible-numerical}. Barring a slightly larger step-size $\Delta t=0.01$ which is adopted in order to reduce the total runtime when $T=10^{5}$, we use the same parameters as in the first test. 
As one can see from the right panel in Figure~\ref{std-dev-plot}, for this bimodal example, 
the standard deviation of estimations is largely due to the finite sample time $T$ (or
equivalently the finite sample size $n$), while the dependence on the step-size $h$ is less apparent.
This is indeed expected from the error
bound~\eqref{mean-square-error-numerical-version} since in this case
$\chi_f^2$ is large and therefore the term in
\eqref{mean-square-error-numerical-version} involving $\frac{\chi_f^2}{T}$
becomes dominant. It is clearly observed that both the use of a larger
sampling time $T=10^{5}$ and the use of a non-zero matrix $A$ (i.e.\
$\gamma\neq 0$) in the scheme~\eqref{scheme-non-reversible-numerical} help
decrease the estimation error significantly.
As shown in Figure~\ref{traj-angle-plot}, in comparison to the trajectory
corresponding to $\gamma=0$ (left panel in Figure~\ref{traj-angle-plot}), the
switching of the sampled states between the two low-potential regions indeed becomes more frequent when $\gamma\neq 0$ (middle and right panels in Figure~\ref{traj-angle-plot}), due to the (non-reversible) rotational effect introduced by $\gamma\neq 0$. 
As $\gamma$ increases from $2$ to $4$, this rotational effect becomes stronger
(see the middle and right panels in Figure~\ref{traj-angle-plot}) and the estimation error decreases further (see right panel in Figure~\ref{std-dev-plot}).
We refer to the column ``frequency $\theta$-trans.'' of Table~\ref{tab-test2}, where the 
frequency of transitions (computed based on one of the $10$
runs with the choice $T=10^5$) between the two regions $\{x\in
\Sigma\,,\,|\theta - \frac{\pi}{2}|\le \frac{\pi}{4}\}$ and $\{x\in
\Sigma\,,\,|\theta - \frac{3\pi}{2}|\le \frac{\pi}{4}\}$  are recorded for different $\gamma\in\{0,2,4\}$.
Similar to first test, the right panel in
Figure~\ref{dist-plot-of-RK-steps} and the column ``RK-S'' of
Table~\ref{tab-test2} show that the number of Runge-Kutta steps required in order to achieve the convergence
criterion when solving the ODE~\eqref{phi-map-a-kappa} slightly increases as step-size $h$
increases, due to the increase of the distance from the intermediate states to
$\Sigma$ (see the column ``$|\xi(\widetilde{x}^{(\ell+\frac{1}{2})})|$'' in
Table~\ref{tab-test2}), and it does not change evidently for different $\gamma\in \{0,2,4\}$. 
However, the column ``RK-Err'' of Table~\ref{tab-test2} indicates that the error of the numerical solutions to the
ODE~\eqref{phi-map-a-kappa} computed by Runge-Kutta method with fixed
step-size $\Delta t=0.01$ (compared to the reference solution computed using the small step-size $\Delta t=5 \times 10^{-5}$) increases as $\gamma$ increases.
This is probably due to the fact that the constant $C$ in \eqref{assump-on-tnum} in Assumption~\ref{assump-3}
increases as $\gamma$ increases, due to the increasing magnitude of the vector field in the ODE~\eqref{phi-map-a-kappa}. This suggests that in practice one needs to tune the magnitude of $A$ in order to balance the computational gains and error due to the non-reversibility of the numerical scheme. Overall, as shown in the right panel in Figure~\ref{std-dev-plot} and in the column ``std.\ $f$'' of Table~\ref{tab-test2}, in this test the standard deviation of $10$ Monte Carlo runs is significantly reduced when $A\neq 0$, while at the same time the number of
Runge-Kutta steps is maintained within $11-17$ steps on average. These observations comply with the theoretical results in Section~\ref{subsec-summary-of-main-results} and clearly display the efficacy of the non-reversible scheme (with $A\neq0$) over its reversible counterpart (with $A=0$).

\section{Conclusion and discussion}\label{sec:discuss}
In this paper we have analysed a non-reversible projection-based numerical
scheme, which samples the conditional invariant measure on the level set of a
reaction coordinate function. We have presented quantitative error estimates which show
that the scheme is consistent, i.e.\ long-time averages converge to averages
with respect to the conditional invariant measure on the submanifold. Additionally we
have shown that this scheme analytically outperforms its reversible
counterpart~\cite{Zhang20}, in terms of smaller or equal asymptotic variance. Moreover, these features are supported by numerical examples. The proofs of the error estimates require a delicate
treatment of the ODE-based projection and an analysis of an appropriate SDE
with the correct generator, while the analysis of the asymptotic variance
extends the corresponding analysis in Euclidean spaces~\cite{DuncanLelievrePavliotis16} to submanifolds.

We now comment on some related issues and open questions. 

\emph{Assumptions on reaction coordinate and noise.} 
While in practice Gaussian random variables are preferred, in our analysis we
use bounded random variables as noise (recall \eqref{conditions-on-eta} and
Remark~\ref{rmk-on-choice-of-noise}), which ensures that the states stay
within $\Sigma^{(\delta)}$ (recall Remark~\ref{rmk-well-posedness-of-limit-theta}).  If the gradient of the
reaction coordinate $\xi$ satisfies that $\nabla\xi^T\nabla\xi \succeq c_1 I_k$ for some $c_1>0$ on entire
  $\mathbb{R}^d$ (see Remark~\ref{rmk-on-set-sigma-delta} and the proof of
  Proposition~\ref{prop:varphi}), the analysis of this paper can be extended to the case of Gaussian random variables as well. Although we circumvent this global assumption, it is typically employed in the coarse-graining literature~\cite{legoll2010effective,DLPSS18}. 

\emph{Unbounded submanifolds.} Although in this paper we have focused on reaction coordinate with compact level set $\Sigma$, 
in applications one can encounter sampling problems on unbounded submanifolds. We expect that our results will apply to this setting, but care needs to be taken when handling the corresponding Poisson equations on unbounded domains (see~\cite{pardoux2003poisson}). 
To the best of our knowledge, the analysis of sampling schemes for nonlinear
reaction coordinates with unbounded level sets is open. 

\emph{Connections between the numerical scheme and SDE.} The
SDE~\eqref{sde-on-sigma} plays a crucial role in the proof of
Theorem~\ref{thm:Err}, wherein the zero-order terms of the Taylor expansion
are identified with the generator of~\eqref{sde-on-sigma}. This suggests that
there is a strong connection between the numerical scheme~\eqref{scheme-non-reversible} and the SDE~\eqref{sde-on-sigma}. While
we do not pursue this line of enquiry, following the results in~\cite[Section
6.1]{MattinglyStuartTretyakov10} it is possible to quantitatively estimate the
distance between the invariant measure of the numerical scheme~\eqref{scheme-non-reversible} and the invariant measure of the SDE~\eqref{sde-on-sigma}. 
Moreover we expect that \eqref{scheme-non-reversible} is a consistent numerical discretisation of the SDE~\eqref{sde-on-sigma} on finite time horizons.

\emph{Sampling schemes using Langevin dynamics. } The numerical scheme~\eqref{scheme-non-reversible} is inspired by the overdamped Langevin dynamics, and a natural extension would be to use the (underdamped) Langevin dynamics. In this case, one approach would be to propagate the position and momentum via  standard numerical schemes for the Langevin dynamics, and then to project the position onto the manifold $\Sigma$ using the approach proposed in this paper. However the right choice for the projected momentum such that the numerical scheme samples the correct target measure requires further investigation.  

\emph{Metropolisation.} Constrained numerical schemes on submanifolds using
Lagrange multipliers can be metropolised by adding a Metropolis-Hasting
acceptance-rejection step~\cite{hmc-submanifold-tony,goodman-submanifold,multiple-projection-mcmc-submanifolds}.  An interesting open problem is to explore the metropolisation of the reversible version of our numerical scheme (i.e.\ with $A=0$). This will be addressed in future work.

\vspace{1em}
\noindent\textbf{Acknowledgments.} The authors would like to thank the anonymous referees for valuable suggestions and comments. US thanks P\'eter Koltai for
stimulating discussion on the ODE flow.  The work of US and WZ is supported by the DFG under Germany's Excellence Strategy-MATH+: The Berlin Mathematics Research Centre (EXC-2046/1)-project ID:390685689 (subproject EF4-4). US also acknowledges support from the Alexander von Humboldt foundation.

\begin{appendices}
\section{Auxiliary identity}\label{app:lemma-identity-proof}
We prove the following lemma which has been used in the proof of
  Proposition~\ref{prop2} (see Section~\ref{sec-map-theta}).
 \begin{lemma}
  For $x\in \Sigma$ we have
  \begin{equation*}  \label{lemma-identity}
\nabla\xi^T \lim_{t\rightarrow +\infty}  \int_0^t
    \Big[\mathrm{e}^{-s\Gamma} a\,(\mathrm{e}^{-s\Gamma})^T\Big] \,ds = 
    \frac{1}{2}\Phi^{-1} \nabla\xi^T(a-A)   \,.
  \end{equation*}
\end{lemma}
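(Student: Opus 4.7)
\medskip

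\noindent\emph{Proof proposal.} The plan hinges on the intertwining identity $\nabla\xi^T\Gamma=\Phi\nabla\xi^T$, which follows directly from the definitions~\eqref{phi-gamma}. Iterating gives $\nabla\xi^T\Gamma^i=\Phi^i\nabla\xi^T$ for all $i\ge 0$, and summing the power series yields
\begin{equation*}
\nabla\xi^T\,\mathrm{e}^{-s\Gamma} = \mathrm{e}^{-s\Phi}\,\nabla\xi^T\,,\quad s\ge 0\,.
\end{equation*}
This is the crucial observation: although $\mathrm{e}^{-s\Gamma}$ only converges to $P$ (so the integral on the left of the claim diverges in general), multiplying by $\nabla\xi^T$ replaces the outer exponential by $\mathrm{e}^{-s\Phi}$, which decays exponentially since by Lemma~\ref{lemma-on-matrix-pi-phi} all eigenvalues of $\Phi$ have strictly positive real part. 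Combined with boundedness of $(\mathrm{e}^{-s\Gamma})^T$, this makes the integrand in the claim integrable on $[0,\infty)$.

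I would then define $N_\infty:=\lim_{t\to+\infty}\nabla\xi^T\int_0^t \mathrm{e}^{-s\Gamma}a(\mathrm{e}^{-s\Gamma})^T\,ds=\lim_{t\to+\infty}\int_0^t \mathrm{e}^{-s\Phi}\nabla\xi^T a(\mathrm{e}^{-s\Gamma})^T\,ds$, whose existence has just been established, and derive a closed equation for it. Computing
\begin{equation*}
\frac{d}{ds}\Bigl[\mathrm{e}^{-s\Phi}\nabla\xi^T a\,(\mathrm{e}^{-s\Gamma})^T\Bigr] = -\Phi\,\mathrm{e}^{-s\Phi}\nabla\xi^T a\,(\mathrm{e}^{-s\Gamma})^T - \mathrm{e}^{-s\Phi}\nabla\xi^T a\,(\mathrm{e}^{-s\Gamma})^T\Gamma^T
\end{equation*}
and integrating on $[0,+\infty)$ (the boundary term at $+\infty$ vanishes because $\mathrm{e}^{-s\Phi}\to 0$, while at $s=0$ it equals $\nabla\xi^T a$) yields the Sylvester equation
\begin{equation*}
\Phi\,N_\infty + N_\infty\,\Gamma^T = \nabla\xi^T a\,.
\end{equation*}

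I would then verify that $N_\infty=\tfrac12\Phi^{-1}\nabla\xi^T(a-A)$ solves this equation. Indeed, $\Phi\cdot\tfrac12\Phi^{-1}\nabla\xi^T(a-A) = \tfrac12\nabla\xi^T(a-A)$, while $\tfrac12\Phi^{-1}\nabla\xi^T(a-A)\,\Gamma^T = \tfrac12\Phi^{-1}\nabla\xi^T(a-A)\nabla\xi\nabla\xi^T(a+A) = \tfrac12\Phi^{-1}\Phi\,\nabla\xi^T(a+A) = \tfrac12\nabla\xi^T(a+A)$, and these sum to $\nabla\xi^T a$ via $2a=(a-A)+(a+A)$. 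Uniqueness is standard Sylvester theory: by~\eqref{eqn-relation-power-of-gamma-and-phi}, the spectrum of $\Gamma$ (hence of $\Gamma^T$) consists of the eigenvalues of $\Phi$ together with $0$ of multiplicity $d-k$, so $-\Gamma^T$ has eigenvalues with non-positive real parts while $\Phi$ has eigenvalues with strictly positive real parts; the two spectra are disjoint, so the Sylvester equation has a unique solution, which is our candidate.

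The main obstacle is resisting the temptation to compute the integral head-on: the naive approach via the explicit formula for $\mathrm{e}^{-s\Gamma}$ in Proposition~\ref{prop-1st-varphi} leads to a term proportional to $\int_0^\infty \mathrm{e}^{-s\Phi}\mathrm{e}^{-s\Phi^T}\,ds$, which itself only solves a Lyapunov equation with no closed form. The Sylvester reformulation, forced by the intertwining identity, bypasses this entirely and reduces the problem to an algebraic verification.
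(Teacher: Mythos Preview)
Your argument is correct and takes a genuinely different route from the paper's.

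Both proofs begin with the intertwining identity $\nabla\xi^T\mathrm{e}^{-s\Gamma}=\mathrm{e}^{-s\Phi}\nabla\xi^T$, but then diverge. The paper \emph{does} compute the integral head-on: it expands $(\mathrm{e}^{-s\Gamma})^T$ using the explicit formula of Proposition~\ref{prop-1st-varphi}, splits $2a=(a-A)+(a+A)$, and regroups terms so that the combination $\mathrm{e}^{-s\Phi}(\Phi\Phi^{-T}+I_k)\mathrm{e}^{-s\Phi^T}$ appears. This particular combination has the exact antiderivative $-\mathrm{e}^{-s\Phi}\mathrm{e}^{-s\Phi^T}\Phi^{-T}$, which yields a closed form after integration and passing to the limit. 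So your closing remark is slightly too pessimistic: the term $\int_0^\infty \mathrm{e}^{-s\Phi}\mathrm{e}^{-s\Phi^T}ds$ by itself indeed has no closed form, but the paper never needs it in isolation---the algebraic regrouping produces precisely the integrable combination.

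Your Sylvester-equation approach is more conceptual: it sidesteps all the explicit manipulation of $(\mathrm{e}^{-s\Gamma})^T$ and reduces the problem to an algebraic verification plus a spectral disjointness argument. The verification that $\tfrac12\Phi^{-1}\nabla\xi^T(a-A)$ solves $\Phi N+N\Gamma^T=\nabla\xi^T a$ is clean, and the uniqueness argument (spectrum of $\Gamma$ equals that of $\Phi$ together with zero, hence $\sigma(\Phi)\cap\sigma(-\Gamma^T)=\emptyset$) is correct. The paper's direct computation has the mild advantage of being self-contained (no appeal to Sylvester theory), while your approach is shorter and makes the structure more transparent.
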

\begin{proof}
Using~\eqref{eqn-1st-varphi} we find
\begin{align}\label{eqn-lemma-identity-1}
%  \begin{aligned}  
       &\nabla\xi^T\int_0^t \Big[\mathrm{e}^{-s\Gamma} a (\mathrm{e}^{-s\Gamma})^T\Big] \,ds \nonumber\\
 =&     \int_0^t\,\nabla\xi^T\Big(P + (a-A)\nabla\xi \mathrm{e}^{-s\Phi} \Phi^{-1}\nabla\xi^T\Big) 
     a\Big(I_d + (a-A)\nabla\xi (\mathrm{e}^{-s\Phi}-I_k) \Phi^{-1}\nabla\xi^T\Big)^T \, ds \nonumber\\
    =&  \int_0^t \,\bigg[\nabla\xi^T(a-A)\nabla\xi \mathrm{e}^{-s\Phi}
    \Phi^{-1}\nabla\xi^T a \Big(I_d + (a-A)\nabla\xi (\mathrm{e}^{-s\Phi} - I_k)
    \Phi^{-1}\nabla\xi^T\Big)^T \,\bigg] ds \nonumber\\
    =&  \int_0^t \,\bigg[\mathrm{e}^{-s\Phi} \nabla\xi^T a \Big(I_d +
    (a-A)\nabla\xi (\mathrm{e}^{-s\Phi} - I_{k})
    \Phi^{-1}\nabla\xi^T\Big)^T \,\bigg] ds \nonumber\\
    =&  \left[\int_0^t \mathrm{e}^{-s\Phi}\,ds\right]\nabla\xi^Ta + \int_0^t
    \left[\mathrm{e}^{-s\Phi} \nabla\xi^T\,a \,\nabla\xi \Phi^{-T}
    (\mathrm{e}^{-s\Phi^T} - I_k) \nabla\xi^T (a+A)\right] ds \,,
%  \end{aligned}
\end{align}
where  
  the second equality follows from $\nabla\xi^T P=0$ (see Lemma~\ref{lemma-p-b}), the third equality 
  follows from the definition of $\Phi$ in~\eqref{phi-gamma} and $\Phi \mathrm{e}^{-s\Phi}\Phi^{-1} =
  \mathrm{e}^{-s\Phi}$. 

Using $2a = (a-A) + (a+A)$, for the final integral term in the right hand side of~\eqref{eqn-lemma-identity-1} we have 
  \begin{align*}
      &\quad \int_0^t \left[\mathrm{e}^{-s\Phi} \nabla\xi^T\,a \,\nabla\xi \Phi^{-T}
    (\mathrm{e}^{-s\Phi^T} - I_k) \nabla\xi^T (a+A)\right] ds \\
      &=   \frac{1}{2} \int_0^t
      \left[\mathrm{e}^{-s\Phi} \nabla\xi^T\,(a-A) \,\nabla\xi \Phi^{-T}
      (\mathrm{e}^{-s\Phi^T} - I_k) \nabla\xi^T (a+A)\,\right] ds \\
      & \ \ \ + \frac{1}{2}\int_0^t \left[\mathrm{e}^{-s\Phi} \nabla\xi^T\,(a+A)
      \,\nabla\xi \Phi^{-T} (\mathrm{e}^{-s\Phi^T} - I_k) \nabla\xi^T (a+A)\right] ds \\
      &=  \frac{1}{2}\int_0^t \left[\mathrm{e}^{-s\Phi} \Phi \Phi^{-T}
      (\mathrm{e}^{-s\Phi^T} - I_k) \nabla\xi^T (a+A)\,\right] ds 
     +\frac{1}{2} \int_0^t \left[\mathrm{e}^{-s\Phi} (\mathrm{e}^{-s\Phi^T} - I_k)
      \nabla\xi^T (a+A)\right]\, ds \\
      &=  -\frac{1}{2}\left[\int_0^t \mathrm{e}^{-s\Phi} \,ds\right] (\Phi\Phi^{-T}+I_k)\nabla\xi^T (a+A) 
      + \frac{1}{2} \left[\int_0^t \mathrm{e}^{-s\Phi} (\Phi\Phi^{-T}+I_k)
      \mathrm{e}^{-s\Phi^T}\, ds \right] \nabla\xi^T (a+A)\,, 
  \end{align*}
where the final equality follows by rearranging terms. Substituting this relation into \eqref{eqn-lemma-identity-1} we arrive at
    \begin{equation*}
      \begin{aligned}
	&\nabla\xi^T \int_0^t \Big[\mathrm{e}^{-s\Gamma} a (\mathrm{e}^{-s\Gamma})^T\Big] \,ds\\
      &=  \frac{1}{2}\left[\int_0^t \mathrm{e}^{-s\Phi} \,ds\right] 
	\left[\nabla\xi^T (a-A)- \Phi\Phi^{-T}\nabla\xi^T (a+A) \right]
      + \frac{1}{2} \left[\int_0^t \mathrm{e}^{-s\Phi} (\Phi\Phi^{-T}+I_k)
      \mathrm{e}^{-s\Phi^T}\, ds \right] \nabla\xi^T (a+A) \\
      &=  \frac{1}{2} 
(I_k-\mathrm{e}^{-t\Phi})
	\left[\Phi^{-1}\nabla\xi^T (a-A)- \Phi^{-T}\nabla\xi^T (a+A) \right]
      + \frac{1}{2} (I_k-\mathrm{e}^{-t\Phi}\mathrm{e}^{-t\Phi^{T}})\Phi^{-T}
      \nabla\xi^T (a+A)\,,
  \end{aligned}
\end{equation*}
where the final equality follows from the integration identities (which can be
verified by differentiating both sides)
\begin{equation}\label{eq-exp-iden}
  \begin{aligned}
    & \int_0^t \mathrm{e}^{-s\Phi} ds =  (I_k-\mathrm{e}^{-t\Phi})\Phi^{-1}\,,\\
    & \int_0^t \mathrm{e}^{-s\Phi} (\Phi\Phi^{-T} + I_k) \mathrm{e}^{-s\Phi^T} ds =
    (I_k-\mathrm{e}^{-t\Phi}\mathrm{e}^{-t\Phi^{T}})\Phi^{-T}\,.
  \end{aligned}
\end{equation}
Since all eigenvalues of $\Phi$ have positive real parts (recall
Lemma~\ref{lemma-on-matrix-pi-phi}), passing $t\rightarrow +\infty$ we find 
  \begin{equation*}
    \begin{aligned}
      &\quad \nabla\xi^T \lim_{t\rightarrow +\infty}\int_0^t \Big[\mathrm{e}^{-s\Gamma} 
    a (\mathrm{e}^{-s\Gamma})^T\Big] \,ds\\
    &=  \lim_{t\rightarrow +\infty}
      \bigg[\frac{1}{2} 
(I_k-\mathrm{e}^{-t\Phi})
	\left[\Phi^{-1}\nabla\xi^T (a-A)- \Phi^{-T}\nabla\xi^T (a+A) \right]
      + \frac{1}{2} (I_k-\mathrm{e}^{-t\Phi}\mathrm{e}^{-t\Phi^{T}})\Phi^{-T}
      \nabla\xi^T (a+A)\bigg]\\
    &=   \frac{1}{2}\Phi^{-1} \nabla\xi^T(a-A)\,. 
    \end{aligned}
  \end{equation*}
\end{proof}

\section{Proof of Theorem~\ref{thm:Err}}\label{app:thm-err}
In this section, we prove Theorem~\ref{thm:Err}, Corollary~\ref{corollary-on-spectral-gap} and Theorem~\ref{thm-numerical-theta}.

Since we will work with higher-order derivatives, let us first introduce some additional notations that will be used below. For any function $g:
\Sigma^{(\delta)}\rightarrow \mathbb{R}^m$, 
where $m\ge 1$, we denote by
\begin{equation*}
    D^j g_i [\bm u_1,\dots,\bm u_j]:= \sum_{r_1=1}^d\sum_{r_2=1}^d\cdots\sum_{r_j=1}^d
    \frac{\partial^j g_i}{\partial
  x_{r_1}\dots \partial x_{r_j}} u_{1 r_1}\,u_{2 r_2} \dots u_{j r_j}\,, \quad 1 \le i
    \le m\,,
\end{equation*}
the directional derivatives along vectors $\bm u_1,\dots,\bm u_j\in \R^d$, where $\bm u_r = (u_{r1},
  u_{r2}, \dots, u_{rd})^T$ for $1 \le r \le j$. When $j=1$, we also use the notation $Dg_i$ for $D^1g_i$.
  We denote by $D^j g[\bm u_1,\dots,\bm u_j]$ the vector $$(D^j g_1[\bm
  u_1,\dots,\bm u_j], \dots, D^j g_m[\bm u_1,\dots,\bm u_j])^T \in \mathbb{R}^m.$$   
  Note that $D^jg$ defines a multilinear operator acting on vectors. 
  We denote by $\|D^jg\|_\infty$ the supremum norm of $D^jg$ on $\Sigma^{(\delta)}$, i.e.\ at each point on $\Sigma^{(\delta)}$, we have
  \begin{equation}
|D^j g [\bm u_1,\dots,\bm u_j]| \le \|D^jg\|_\infty \,|\bm u_1|\cdot \dots
    \cdot|\bm u_j|\,, \quad \forall~\bm u_1,\ldots, \bm u_j \in \mathbb{R}^d\,.
  \end{equation}

  To keep the notations both unified and consistent with Section~\ref{secsub-notations}, we 
  define $D g := \nabla g \in \mathbb{R}^{d\times m}$, with 
  $(D g)_{ji} = \frac{\partial g_i}{\partial x_j}$ for $1 \le j\le d$ and $1 \le i \le m$,
and we denote by $a:D^2 g$ the vector in $\mathbb{R}^m$ with
  components $\big(a : D^2 g\big)_i=\sum_{j,r=1}^d\frac{\partial^2 g_i}{\partial x_j\partial x_r} a_{jr}$ for  $1 \le i \le m$.

  In the proof below, we will consider values of a function $g$ at different states $x^\lb$ (for $\ell \ge 0$) generated by the numerical scheme~\eqref{scheme-non-reversible}. To simplify the presentation, we write $g^\lb:=g(x^\lb)$. 

Next, we recall the generator $\mathcal{L}$ of SDE~\eqref{sde-on-sigma} (defined in \eqref{generator-l})
\begin{equation}\label{def:L-app}
  \mathcal{L}
    = \frac{\mathrm{e}^{\beta U}}{\beta} \sum_{i,j=1}^d\frac{\partial}{\partial
    x_j}\left(B_{ij} \mathrm{e}^{-\beta U} \frac{\partial }{\partial x_i} \right) 
    = -\sum_{i,j=1}^d B_{ij}\frac{\partial U}{\partial
    x_j}\frac{\partial}{\partial x_i} + \frac{1}{\beta}
    \sum_{i,j=1}^d\frac{\partial B_{ij}}{\partial x_j}\frac{\partial}{\partial
    x_i} +\frac{1}{\beta}\sum_{i,j=1}^dB_{ij}\frac{\partial^2}{\partial
    x_i\partial x_j}\,.
\end{equation}
The following result discusses the well-posedness of the Poisson problem~\eqref{eqn-poisson-equation}, which will be used in proof of Theorem~\ref{thm:Err}.
\begin{prop}\label{prop-Poisson}
For any $f\in C^2(\Sigma)$, the Poisson problem   
\begin{equation}\label{eq:Poisson-App}
\mathcal L \psi = f-\bar f\,,\quad \mbox{on}~ \Sigma\quad \textrm{with}~ \E_{\cmu}[\psi] = 0\,,
\end{equation}
  with $\bar f = \E_{\cmu}[f]$, has a unique solution $\psi\in C^4(\Sigma)$. Furthermore,
  there exists an extension of $\psi$ to $C^4(\Sigma^{(\delta)})$, and a constant $C>0$ independent of $f$, such that  
\begin{equation*}
\|\psi\|_{\infty},\, \|D\psi\|_{\infty},\,
  \|D^2\psi\|_{\infty},\,\|D^3\psi\|_{\infty},\,
  \|D^4\psi\|_{\infty} \leq C(\|f\|_{\infty, \Sigma}+\|Df\| _{\infty, \Sigma}+\|D^2f\| _{\infty, \Sigma})\,,
\end{equation*}
where $\|\cdot\|_{\infty, \Sigma}$ denotes the supremum norm on $\Sigma$.
\end{prop}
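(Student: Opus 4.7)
The plan is to regard $\mathcal{L}$ as a uniformly elliptic operator on the compact, connected $C^4$-submanifold $\Sigma$ and to invoke standard linear elliptic PDE theory. The first step is to justify this intrinsic viewpoint. As noted in Remark~\ref{rmk-on-sde-and-l}, $\mathcal{L}$ is well-defined on $C^2(\Sigma)$ independently of any $C^2(\R^d)$-extension. Although $B^{\mathrm{sym}}=PaP^T$ has rank $d-k$ on $\R^d$, combining the explicit formula~\eqref{matrix-p-b}, the identity $PV=V$ (Lemma~\ref{lemma-p-b}), and the uniform positive-definiteness of $a$ yields a positive lower bound for $v^T B^{\mathrm{sym}}(x) v$ whenever $v$ lies in the tangent space $T_x\Sigma$, $x\in\Sigma$. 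In local charts of $\Sigma$ this translates to the usual uniform ellipticity of the second-order part of the local representative of $\mathcal{L}$.

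The second step is existence and uniqueness via the Fredholm alternative. Applying the integration-by-parts formula~\eqref{integration-by-parts} with $g\equiv 1$ shows that the range of $\mathcal{L}$ lies in $\{h\in L^2(\Sigma,\mu):\int_\Sigma h\,d\mu=0\}$. Next, if $\mathcal{L}g=0$ for some $g\in C^2(\Sigma)$, then~\eqref{integration-by-parts-f} gives $\int_\Sigma B^{\mathrm{sym}}\nabla g\cdot\nabla g\,d\mu=0$; by the intrinsic ellipticity established above and the connectedness of $\Sigma$, $g$ must be constant. The same argument applied to $\mathcal{L}^*$, whose explicit form is provided by item~\ref{it:prop-on-l-4} of Proposition~\ref{prop-generator-l}, shows that $\ker\mathcal{L}^*$ also consists only of constants. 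Since $f-\bar f$ has zero $\mu$-mean by the definition of $\bar f$, the Fredholm alternative on the compact manifold $\Sigma$ produces a solution to~\eqref{eq:Poisson-App}, rendered unique by the normalization $\E_\mu[\psi]=0$.

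The third step is regularity and the quantitative estimate. The coefficients of $\mathcal{L}$ are $C^2$-smooth on $\Sigma$, since $B\in C^3(\Sigma^{(\delta)})$ by Remark~\ref{rmk-on-v-p-b} (it involves up to three derivatives of the $C^4$-map $\xi$). Classical Schauder theory (or the $W^{2,p}$-theory combined with Sobolev embedding), applied in a finite atlas of $\Sigma$, therefore yields $\psi\in C^4(\Sigma)$ together with a uniform bound of the form $\|\psi\|_{C^4(\Sigma)}\le C\bigl(\|f\|_{C^2(\Sigma)}+\|\psi\|_{L^\infty(\Sigma)}\bigr)$. To close the estimate I would combine this with an $L^\infty$-bound $\|\psi\|_{L^\infty}\le C\|f-\bar f\|_{L^\infty}\le 2C\|f\|_\infty$, obtained either from the maximum principle applied to $\mathcal{L}$ on $\Sigma$ or, more conveniently, from the probabilistic representation $\psi(x)=-\int_0^\infty \E_x\bigl[(f-\bar f)(X_s)\bigr]\,ds$ together with the exponential ergodicity of the SDE~\eqref{sde-on-sigma} guaranteed by Proposition~\ref{prop:inv-meas} and the Foster-Lyapunov criterion.

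The main technical obstacle is the rigorous reduction to the classical elliptic regularity theory: although $\mathcal{L}$ is degenerate when viewed as an operator on $\R^d$, one must carefully transfer the intrinsic ellipticity on $\Sigma$ to uniform ellipticity of the local representatives of $\mathcal{L}$ in each chart, ensuring that the Schauder constants can be chosen uniformly over a finite atlas. A secondary bookkeeping issue is to verify that the $C^4$-Schauder estimate indeed brings in only $\|f\|_\infty,\|Df\|_\infty,\|D^2 f\|_\infty$ on the right-hand side, which follows from the standard form of the interior Schauder estimate combined with the $L^\infty$-control of $\psi$ by $\|f\|_\infty$.
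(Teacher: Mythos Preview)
Your proposal is correct and follows essentially the same approach as the paper, which merely states that the result ``follows from classical elliptic theory'' and cites \cite[Section~4.1]{MattinglyStuartTretyakov10} without further detail. You have spelled out the standard steps (intrinsic ellipticity via~\eqref{relation-p-b}, Fredholm alternative using~\eqref{integration-by-parts-f}, Schauder regularity, and an $L^\infty$ bound on $\psi$) that the paper leaves entirely implicit.
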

The proof of Proposition~\ref{prop-Poisson} follows from classical elliptic
theory (see~\cite[Section 4.1]{MattinglyStuartTretyakov10} for instance) and
standard extension results. 
Applying Proposition~\ref{prop-1st-varphi} and Proposition~\ref{prop2}, 
we can characterise the generator $\mathcal L$ in terms of derivatives of $\Theta^A$.
\begin{lemma}\label{app:lem-gen-mod}
For any $g\in C^2(\Sigma)$, by abuse of notation $g$ also denotes a smooth extension to $C^2(\Sigma^{(\delta)})$.
  We have
\begin{align}\label{eq:gen-Theta}
  \begin{split}
  \mathcal L g =& \sum_{i=1}^d\frac{\partial g}{\partial
    x_i}\sum_{r,j=1}^d  \bigg[ -(a-A)_{rj}\frac{\partial U}{\partial x_j}\frac{\partial \Theta^A_i}{\partial x_r} 
+\frac{1}{\beta} \frac{\partial \Theta_i^A}{\partial x_r}\frac{\partial a_{rj}}{\partial x_j}
+\frac{1}{\beta} a_{rj}\frac{\partial^2 \Theta_i^A}{\partial x_r\partial x_j} 
\bigg]\\
  &
+\frac{1}{\beta} 
  \sum_{i,j=1}^d
    \frac{\partial^2 g}{\partial x_i\partial x_j} \Big(\sum_{\l, r=1}^d
    \frac{\partial \Theta_i^A}{\partial x_r} a_{r\l} \frac{\partial
    \Theta_j^A}{\partial x_\l}\Bigr), \qquad \mbox{on}~\Sigma\,,
  \end{split}
\end{align}
where $\mathcal L$ is in~\eqref{def:L-app}. The right hand side of \eqref{eq:gen-Theta} does not depend on the choice of extensions used for $g$.
\end{lemma}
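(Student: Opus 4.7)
The plan is to prove the identity by matching terms of different orders in $\psi$ on both sides, using Propositions~\ref{prop-1st-varphi} and~\ref{prop2} to translate between $B$ and derivatives of $\Theta^A$, and to defer the extension-independence claim to a short observation at the end. First I would start from the expression
\begin{equation*}
\mathcal{L}\psi = -\sum_{i,j=1}^d B_{ij}\frac{\partial U}{\partial x_j}\frac{\partial \psi}{\partial x_i} + \frac{1}{\beta}\sum_{i,j=1}^d \frac{\partial B_{ij}}{\partial x_j}\frac{\partial \psi}{\partial x_i} + \frac{1}{\beta}\sum_{i,j=1}^d B_{ij}\frac{\partial^2 \psi}{\partial x_i\partial x_j}
\end{equation*}
from~\eqref{def:L-app}, split into a drift piece, a divergence piece, and a Hessian piece, and handle each separately.

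For the drift piece, I would use Proposition~\ref{prop-1st-varphi} which gives $P_{i\ell} = \partial\Theta^A_i/\partial x_\ell$ on $\Sigma$, combined with $B = P(a-A)$ from~\eqref{projection-p}, so that $B_{ij} = \sum_r \frac{\partial \Theta^A_i}{\partial x_r}(a-A)_{rj}$. Substituting this yields exactly the first summand on the right-hand side of~\eqref{eq:gen-Theta}. For the divergence piece, I would invoke Proposition~\ref{prop2} in the rearranged form
\begin{equation*}
\sum_{j=1}^d \frac{\partial B_{ij}}{\partial x_j} = \sum_{j,r=1}^d a_{rj}\frac{\partial^2 \Theta^A_i}{\partial x_r\partial x_j} + \sum_{j,\ell=1}^d P_{i\ell}\frac{\partial a_{\ell j}}{\partial x_j},
\end{equation*}
and again replace $P_{i\ell}$ by $\partial\Theta^A_i/\partial x_\ell$ (relabelling $\ell \to r$) to recover the two remaining first-order summands.

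For the Hessian piece, I would use symmetry of the Hessian $\partial^2_{ij}\psi$ to rewrite $\sum_{i,j} B_{ij}\partial^2_{ij}\psi = \sum_{i,j} B^{\mathrm{sym}}_{ij}\partial^2_{ij}\psi$, and then invoke the identity $B^{\mathrm{sym}} = PaP^T$ from Lemma~\ref{lemma-p-b}, so that
\begin{equation*}
(PaP^T)_{ij} = \sum_{r,\ell=1}^d P_{ir}\,a_{r\ell}\,P_{j\ell} = \sum_{r,\ell=1}^d \frac{\partial \Theta^A_i}{\partial x_r}\,a_{r\ell}\,\frac{\partial \Theta^A_j}{\partial x_\ell},
\end{equation*}
which is precisely the coefficient appearing in the second-order term of~\eqref{eq:gen-Theta}.

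For the final claim that the right-hand side of~\eqref{eq:gen-Theta} does not depend on the choice of extension of $\psi$ from $\Sigma$ to $\Sigma^{(\delta)}$, I would simply observe that by Remark~\ref{rmk-on-sde-and-l} the operator $\mathcal L$ acting on a $C^2(\R^d)$ extension, when evaluated on $\Sigma$, depends only on the restriction $\psi|_\Sigma$; since the identity just derived equates this intrinsic quantity with the expression on the right, that expression is automatically intrinsic. There is no serious obstacle here — the proof is a direct term-by-term identification — so the only care needed is in the index bookkeeping for the divergence step, where the use of Proposition~\ref{prop2} and the relabelling $P_{i\ell} \leftrightarrow \partial\Theta^A_i/\partial x_\ell$ must be carried out consistently.
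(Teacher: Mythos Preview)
Your proposal is correct and follows essentially the same approach as the paper's proof: both use Proposition~\ref{prop-1st-varphi} to identify $P$ with $\nabla\Theta^A$, Proposition~\ref{prop2} for the divergence term, and Lemma~\ref{lemma-p-b} (specifically $B^{\mathrm{sym}} = PaP^T$ together with symmetry of the Hessian) for the second-order term, with extension-independence deferred to Remark~\ref{rmk-on-sde-and-l}. The only cosmetic difference is direction---the paper starts from the right-hand side of~\eqref{eq:gen-Theta} and reduces it to $\mathcal{L}\psi$, whereas you start from $\mathcal{L}\psi$ and build up the $\Theta^A$ expression---but the identifications are identical.
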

\begin{proof}
For states on $\Sigma$, using~\eqref{projection-p}, \eqref{eqn-of-theta-map} and~\eqref{eqn-2nd-derivative-theta} with $1 \le i \le d$ we find
\begin{align*}
\sum_{r,j=1}^d \bigg[
  -(a-A)_{rj}\frac{\partial U}{\partial x_j}\frac{\partial \Theta^A_i}{\partial x_r} 
  + \frac{1}{\beta}\frac{\partial \Theta_i^A}{\partial x_r}\frac{\partial a_{rj}}{\partial x_j}
+\frac{1}{\beta}a_{rj}\frac{\partial^2 \Theta_i^A}{\partial x_r\partial x_j} \bigg]
= 
  \sum_{j=1}^d \bigg(-B_{ij} \frac{\partial U}{\partial x_j} +
  \frac{1}{\beta}\frac{\partial B_{ij}}{\partial x_j}\bigg).
\end{align*}
Substituting this relation along with 
\begin{align*}
  \sum_{i,j=1}^d \frac{\partial^2 g}{\partial x_i\partial
  x_j}\Big(\sum_{\l,r=1}^d \frac{\partial \Theta_i^A}{\partial x_r} a_{r\l}
  \frac{\partial \Theta_j^A}{\partial x_\l}\Big) = \sum_{i,j=1}^d
  \frac{\partial^2 g}{\partial x_i\partial x_j} (PaP^T)_{ij} = \sum_{i,j=1}^d
  \frac{\partial^2 g}{\partial x_i\partial x_j} B_{ij} 
\end{align*}
into the right hand side of~\eqref{eq:gen-Theta} we conclude that \eqref{eq:gen-Theta} is the same as \eqref{def:L-app}.
  Since $\mathcal{L}g$ is independent of extensions of $g$ (see the
  first item of Remark~\ref{rmk-on-sde-and-l}), 
  we conclude that the right hand side of \eqref{eq:gen-Theta} is independent of extensions of $g$ as well.
\end{proof}

We now present the proof of Theorem~\ref{thm:Err}.
\begin{proof}[Proof of Theorem~\ref{thm:Err}]
Since the proof is similar to that of~\cite[Theorem~3.5]{Zhang20} and follows on the
  lines of~\cite[Section 5]{MattinglyStuartTretyakov10}, we only outline the
  main steps here. 

Define the vector $\bm{b}^\lb=(b^\lb_1, b^\lb_2, \dots, b^\lb_d)^T$ by 
  \begin{equation}  \label{b-vector}
    b_i^\lb = 
    \sum_{j=1}^d \bigg[-\big(a_{ij}(x^\lb)-A_{ij}\big)\frac{\partial U}{\partial
    x_j}(x^\lb)+\frac1\beta \frac{\partial a_{ij}}{\partial x_j}(x^\lb)\bigg]\,, \quad 1
  \le i \le d\,,
\end{equation} 
  for $\l = 0,1,\dots$, and set 
  \begin{equation}    \label{delta-vector-by-b}
    \bm{\delta}^\lb = \bm{b}^\lb h + \sqrt{2\beta^{-1} h}\,\sigma^\lb\bm{\eta}^\lb\,.
\end{equation}
The numerical scheme~\eqref{scheme-non-reversible} can be written as 
\begin{equation}\label{delta-and-x}
  x^{(\ell+\frac{1}{2})}= x^\lb + \bm{\delta}^\lb \quad\mbox{and}\quad
  x^{(\ell+1)} = \Theta^A\big(x^{(\ell+\frac{1}{2})}\big) = \Theta^A\big(x^\lb+\bm{\delta}^\lb\big)\,. 
\end{equation}
  Since $\Sigma$ is compact (Assumption~\ref{assump-2}) and $\bm{\eta}^\lb$ is a bounded random variable (see \eqref{conditions-on-eta}),
  from \eqref{delta-vector-by-b} it is easy to see that there exists $h_0>0$, such that for $h< h_0$ we have $x^{(\ell+\frac{1}{2})}\in \Sigma^{(\delta)}$ for any $\ell \ge 0$.

In what follows we will make use of the Poisson equation~\eqref{eq:Poisson-App} on $\Sigma$,  
  which has a unique solution $\psi$ such that $\psi\in C^4(\Sigma)$ by Proposition~\ref{prop-Poisson}. 
For simplicity we use the same notation to denote a smooth
  extension of $\psi$ to $C^4(\Sigma^{(\delta)})$. The calculation below is
  independent of the choice of extensions (see Lemma~\ref{app:lem-gen-mod}).
  
  Applying Taylor's theorem in $\Sigma^{(\delta)}\subseteq \mathbb{R}^d$ and using 
  $\Theta^A(x^\lb)=x^\lb$ since $x^\lb\in\Sigma$ (Proposition~\ref{prop:varphi}) we find
\begin{align}
  \psi^{(\ell+1)} &= \psi(x^{(\ell+1)})\notag\\
  &= (\psi\circ\Theta^A)\big(x^\lb + \bm{\delta}^\lb\big)\notag\\
    &= \psi^\lb + D(\psi\circ\Theta^A)^\lb[\bm{\delta}^\lb]
    + \frac{1}{2}D^2(\psi\circ\Theta^A)^\lb[\bm{\delta}^\lb, \bm{\delta}^\lb] +
    \frac{1}{6}D^3(\psi\circ\Theta^A)^\lb[\bm{\delta}^\lb,\bm{\delta}^\lb,\bm{\delta}^\lb] + R^\lb \notag\\
  &= \psi^\lb + D\psi^\lb\Big[(D\Theta^A)^\lb[\bm{\delta}^\lb]+\frac{1}{2} (D^2\Theta^A)^\lb[\bm{\delta}^\lb,\bm{\delta}^\lb]\Big] \label{eq:aux1}\\
  &\quad+ \frac{1}{2}D^2\psi^\lb\Big[(D\Theta^A)^\lb[\bm{\delta}^\lb], (D\Theta^A)^\lb[\bm{\delta}^\lb]\Big]  
  +
  \frac{1}{6}D^3(\psi\circ\Theta^A)^\lb[\bm{\delta}^\lb,\bm{\delta}^\lb,\bm{\delta}^\lb]
  + R^\lb\,\notag\,,
\end{align}
where we have used chain rule to compute the first and second derivatives of 
  $\psi\circ\Theta^A$, and the reminder term is explicitly defined as 
\begin{equation*}
R^\lb:=\frac16\Bigl(\int_0^1 (1-s)^3D^4(\psi\circ\Theta^A)(x^\lb+s
  \bm{\delta}^\lb)ds\Bigr)
  \big[\bm{\delta}^\lb,\bm{\delta}^\lb,\bm{\delta}^\lb,\bm{\delta}^\lb\big].
\end{equation*}
Using~\eqref{delta-vector-by-b} for the second term in the right hand side of~\eqref{eq:aux1} we compute
\begin{align*}
&D\psi^\lb\Big[(D\Theta^A)^\lb[\bm{\delta}^\lb]+\frac{1}{2} (D^2\Theta^A)^\lb[\bm{\delta}^\lb,\bm{\delta}^\lb]\Big] \\
&=\, h D\psi^\lb \Big[ (D\Theta^A)^\lb [\bm b^\lb] + \beta^{-1}a^\lb :
  (D^2\Theta^A)^\lb  \Big]\\
  & + \beta^{-1} h  D\psi^\lb \Big[ (D^2\Theta^A)^\lb [\sigma^\lb\bm\eta^\lb,\sigma^\lb\bm\eta^\lb] 
   - a^\lb : (D^2\Theta^A)^\lb  \Big] \\
&+ \sqrt{2\beta^{-1} h} D\psi^\lb \Big[(D\Theta^A)^\lb [\sigma^\lb \bm\eta^\lb]\Big]
  + (2\beta)^{-\frac12} h^{\frac32} D\psi^\lb \Big[(D^2\Theta^A)^\lb [\bm
  b^\lb,\sigma^\lb\bm\eta^\lb]\Big] \\
&+ \frac{h^2}{2}D\psi^\lb \Big[(D^2\Theta^A)^\lb [\bm b^\lb,\bm b^\lb]\Big],
\end{align*}
where we have added and subtracted the term $a^\lb : (D^2 \Theta^A)^\lb$.

Similarly, the third term in the right hand side of~\eqref{eq:aux1} gives
\begin{equation*}
\begin{aligned}
&\frac{1}{2}D^2\psi^\lb\Big[(D\Theta^A)^\lb[\bm{\delta}^\lb], (D\Theta^A)^\lb[\bm{\delta}^\lb]\Big] \\
  &=\, \beta^{-1}h D^2\psi^\lb : \big((D\Theta^A)^T a\,D\Theta^A\big)^\lb \\
  &\,+ \frac{h^2}{2}D^2\psi^\lb \Big[(D\Theta^A)^\lb \bm [\bm b^\lb],
  (D\Theta^A)^\lb [\bm b^\lb]\Big] 
  + (2\beta)^{-\frac12} h^{\frac32}D^2\psi^\lb\Big[(D\Theta^A)^\lb[\bm b^\lb],(D\Theta^A)^\lb[\sigma^\lb\bm\eta^\lb]\Big] \\ 
  &\,+ \beta^{-1}h\Big(D^2\psi^\lb
  \Big[(D\Theta^A)^\lb[\sigma^\lb\bm\eta^\lb],(D\Theta^A)^\lb[\sigma^\lb\bm\eta^\lb]\Big]
  -D^2\psi^\lb : \big((D\Theta^A)^T a\, D\Theta^A\big)^\lb \Big)\,.
\end{aligned}
\end{equation*}

Substituting these expressions back into~\eqref{eq:aux1}, using Lemma~\ref{app:lem-gen-mod} which states that 
\begin{align*}
(\mathcal L\psi)^\lb = D\psi^\lb\Big[(D\Theta^A)^\lb[\bm b^\lb] +\beta^{-1}
  a^\lb:(D^2 \Theta^A)^\lb\Big] + \beta^{-1} D^2\psi^\lb
: \big((D\Theta^A)^T a\, D\Theta^A\big)^\lb , 
\end{align*}
summing over $\ell = 0,1,\ldots,n-1$, and dividing by $T$ we find
\begin{align}\label{eq:avg-diff}
\hat f_n -\bar f = \frac 1n \sum\limits_{\ell = 0}^{n-1} f(x^\lb) -\bar f = \frac hT\sum\limits_{\ell = 0}^{n-1}(\mathcal L\psi)^\lb = \frac1T (\psi^{(n)}-\psi^{(0)}) +\frac1T \sum\limits_{i=0}^5 M_{i,n} + \frac1T \sum\limits_{i=0}^4 S_{i,n}.
\end{align}
Here we have used the Poisson equation~\eqref{eq:Poisson-App} to arrive at the second equality and 
\begin{align*}
M_{1,n}&:=- \sqrt{2\beta^{-1} h} \sum\limits_{\ell=0}^{n-1}
  D\psi^\lb\Big[(D\Theta^A)^\lb[\sigma^\lb\bm\eta^\lb]\Big],\\
  M_{2,n}&:=-(2\beta)^{-\frac12} h^{\frac32} \sum\limits_{\ell=0}^{n-1}D\psi^\lb
  \Big[(D^2\Theta^A)^\lb[\bm b^\lb,\sigma^\lb\bm\eta^\lb]\Big] ,\\
  M_{3,n}&:=-\beta^{-1} h \sum\limits_{\ell=0}^{n-1} \Big(D\psi^\lb
  \Big[(D^2\Theta^A)^\lb \big[\sigma^\lb\bm\eta^\lb,\sigma^\lb\bm\eta^\lb\big] -
  a^\lb : (D^2\Theta^A)^\lb\Big] \Big),\\
  M_{4,n}&:=-(2\beta)^{-\frac12} h^{\frac32} \sum\limits_{\ell=0}^{n-1}
  D^2\psi^\lb\Big[(D\Theta^A)^\lb[\bm b^\lb],(D\Theta^A)^\lb[\sigma^\lb\bm\eta^\lb]\Big],\\
  M_{5,n}&:=-\beta^{-1}h\sum\limits_{\ell=0}^{n-1}\Big(D^2\psi^\lb \Big[
  (D\Theta^A)^\lb
  [\sigma^\lb\bm\eta^\lb],(D\Theta^A)^\lb[\sigma^\lb\bm\eta^\lb]\Big]-
D^2\psi^\lb : \big((D\Theta^A)^T a\, D\Theta^A\big)^\lb \Big)\,,
\end{align*}
and 
\begin{align*}
S_{1,n}&:=-\frac{1}{2}h^2\sum\limits_{\ell=0}^{n-1} D\psi^\lb
  \Big[(D^2\Theta^A)^\lb [\bm b^\lb,\bm b^\lb]\Big],\\
S_{2,n}&:=-\frac12 h^2\sum\limits_{\ell=0}^{n-1}D^2\psi^\lb
  \Big[(D\Theta^A)^\lb [\bm b^\lb],(D\Theta^A)^\lb [\bm b^\lb]\Big],\\
S_{3,n}&:=-\sum\limits_{\ell=0}^{n-1}R^\lb.
\end{align*}
Furthermore using~\eqref{delta-vector-by-b} we split the fourth term in~\eqref{eq:aux1} to arrive at
\begin{align*}
M_{0,n} :=& -\frac16\sum\limits_{\ell=0}^{n-1}
  \Big((2\beta^{-1} h)^{\frac32}\,
  D^3(\psi\circ\Theta^A)^\lb \big[\sigma^\lb\bm\eta^\lb,\sigma^\lb\bm\eta^\lb,\sigma^\lb\bm\eta^\lb\big]
  \\
  & \hspace{1.7cm} + 3h^2\sqrt{2\beta^{-1} h} \,D^3(\psi\circ\Theta^A)^\lb \big[\bm b^\lb,\bm
  b^\lb,\sigma^\lb\bm\eta^\lb\big]\Big),\\
  S_{0,n} :=&-\frac16\sum\limits_{\ell=0}^{n-1}\Big(
  h^3\,D^3(\psi\circ\Theta^A)^\lb \big[\bm b^\lb,\bm b^\lb,\bm b^\lb\big] + 6\beta^{-1} h^2 
  D^3(\psi\circ\Theta^A)^\lb
  \big[\bm b^\lb,\sigma^\lb\bm\eta^\lb,\sigma^\lb\bm\eta^\lb\big] \Big).
\end{align*}

In the following, we denote by $C>0$ a generic constant which is independent of $h,n$.
Using \eqref{conditions-on-eta}, a straightforward calculation shows that $\E
M_{i,n}=0$ for $i=0,\ldots,5$. Using compactness of $\Sigma$ and the
boundedness of $\bm\eta^\lb$ we have the estimates
\begin{equation}\label{eq:E-est}
|S_{1,n}| \leq C hT \|D \psi\|_{\infty},~ \|S_{2,n}| \leq C hT\|D^2
  \psi\|_{\infty},~ |S_{3,n}|\leq ChT\sum_{j=1}^4\|D^j\psi\|_{\infty},~
  |S_{0,n}|\leq C hT\sum_{j=1}^3 \|D^j\psi\|_{\infty}\,,     
\end{equation}
where the last two bounds hold almost surely. Combining these estimates along with
$|\psi^{(n)}-\psi^{(0)}|\leq 2\|\psi\|_\infty$ and applying Proposition~\ref{prop-Poisson}, we arrive at the first result. 

Concerning the estimate on mean square error, using the fact that $M_{i,n}$ are martingales, we obtain the estimates
\begin{align}
  \begin{split}
    &\E|M_{0,n}|^2\leq C h^2T \sum_{j=1}^3\|D^j\psi\|^2_\infty\,, \quad \E|M_{2,n}|^2 \leq Ch^2T
  \|D\psi\|^2_\infty\,, \quad \E|M_{3,n}|^2 \leq ChT \|D\psi\|^2_\infty\,,\\ 
    & \E|M_{4,n}|^2 \leq Ch^2T \|D^2\psi\|^2_\infty\,, \quad 
  \E|M_{5,n}|^2 \leq ChT \|D^2\psi\|^2_\infty\,. 
  \end{split}
  \label{bound-of-mi-without-m1}
\end{align}
For the term $M_{1,n}$, since $\bm\eta^\lb$ for different $\l$ are independent and (see \eqref{eqn-of-theta-map} in Proposition~\ref{prop-1st-varphi})
$$
D\psi^\lb\big[(D\Theta^A)^\lb[\sigma^\lb\bm\eta^\lb]\big] =\bm\eta^\lb \cdot
\big((P\sigma)^T D\psi\big)^\lb,$$ 
we have
  \begin{align}
    \frac{1}{T^2}\E |M_{1,n}|^2 & = \frac{2\beta^{-1}}{T}
    \frac{1}{n} \sum_{\l=0}^{n-1}
    \E \Big[\big((P\sigma)^T D\psi\big)^{(\ell)}
    \cdot \big((P\sigma)^T D\psi\big)^{(\ell)}\Big] \notag \\
    & = \frac{2\beta^{-1}}{T}
    \frac{1}{n} \sum_{\l=0}^{n-1}
    \E \Big[\big((B^{\mathrm{sym}} D\psi\big)^{(\ell)} \cdot D\psi^{(\ell)}\Big]\,, 
    \label{eqn-of-m1}
  \end{align}
  where we have used $P\sigma(P\sigma)^T=PaP^T=B^{\mathrm{sym}}$
  (Proposition~\ref{lemma-p-b}).
   Applying the estimate~\eqref{eq:MainThm-1} to the running average in \eqref{eqn-of-m1}, we obtain
  \begin{align}
%    \begin{split}
    \frac{1}{T^2}\E |M_{1,n}|^2 \le&
    \frac{2\beta^{-1}}{T}\int_{\Sigma} (B^{\mathrm{sym}}D\psi)\cdot
    D\psi\, d\cmu + C\Big(\frac{h}{T}+\frac{1}{T^2}\Big) 
     = \frac{\chi_f^2}{T} + C\Big(\frac{h}{T}+\frac{1}{T^2}\Big) \,,
%    \end{split}
    \label{bound-of-square-m1}
  \end{align}
where $\chi_f^2$ is the asymptotic variance~\eqref{eqn-chi}. 
Taking square on both sides of \eqref{eq:avg-diff},
using Young's inequality, the estimates \eqref{eq:E-est}, \eqref{bound-of-mi-without-m1}, \eqref{bound-of-square-m1}, and applying Proposition~\ref{prop-Poisson}, we arrive at the second result.  

Now we prove the final pathwise result. Substituting the bounds in~\eqref{eq:E-est} into~\eqref{eq:avg-diff} we find
\begin{equation}\label{eq:diff-path}
|\hat f_n -\bar f| \leq \frac1T |\psi^{(n)}-\psi^{(0)}| +\frac1T \sum\limits_{i=0}^5 |M_{i,n}| + \frac1T \sum\limits_{i=0}^4 |S_{i,n}| \leq C\Bigl(h+\frac1T\Bigr) + \frac1T \sum\limits_{i=0}^5 |M_{i,n}|.
\end{equation}
Concerning the last term above, for any $r\geq 1$ we can derive the bounds (we omit the details and refer to the argument in~\cite[Theorem 5.3]{MattinglyStuartTretyakov10}),
\begin{align*}
&\frac{1}{T^{2r}}\E|M_{1,n}|^{2r}\leq \frac{C}{T^r}, \quad
  \frac{1}{T^{2r}}\E|M_{2,n}|^{2r}\leq \frac{Ch^{2r}}{T^r}, \quad
  \frac{1}{T^{2r}}\E|M_{3,n}|^{2r}\leq \frac{Ch^r}{T^r}\,, \\
&\frac{1}{T^{2r}}\E|M_{4,n}|^{2r}\leq \frac{Ch^{2r}}{T^r}, \quad
  \frac{1}{T^{2r}}\E|M_{5,n}|^{2r}\leq \frac{Ch^{r}}{T^r}, \quad \frac{1}{T^{2r}}\E|M_{0,n}|^{2r}\leq \frac{Ch^{2r}}{T^r},    
\end{align*}
which implies that
\begin{equation*}
\E\Bigl( \frac1T\sum_{i=0}^5|M_{i,n}| \Bigr)^{2r} \leq
  \frac{C}{T^{2r}}\sum_{i=0}^5\E|M_{i,n}|^{2r}\leq \frac{C}{T^r}\,.
\end{equation*}
This allows us (using Markov inequality and Borel-Cantelli lemma,
see~\cite[Theorem 5.3 and Section 4.2]{MattinglyStuartTretyakov10} for details) to conclude that for any $\eps\in(0,\frac12)$ there exists an almost surely bounded
random variable $\zeta=\zeta(\omega)$ such that 
\begin{equation*}
\frac1T\sum_{i=0}^5|M_{i,n}|\leq \frac{\zeta}{T^{\frac12-\eps}}\,.
\end{equation*}
The final result follows by substituting this result into~\eqref{eq:diff-path}.  
\end{proof}

Next, we prove Corollary~\ref{corollary-on-spectral-gap}.
\begin{proof}[Proof of Corollary~\ref{corollary-on-spectral-gap}]
 Recall that $\psi$ is the solution to the Poisson equation~\eqref{eq:Poisson-App}
  with $\E_{\cmu}[\psi] = 0$. Applying the Poincar{\'e} inequality~\eqref{poincare-inequality-mu1} followed by Cauchy-Schwarz inequality we have the standard estimates (see proofs of \cite[Corollary $2$]{Zhang20} and \cite[Lemma $9$]{legoll2017pathwise})
    \begin{equation*}
    \int_{\Sigma} \psi^2 d\cmu \le -\frac{1}{K} \int_{\Sigma}
    (\mathcal{L}\psi) \psi\,d\cmu 
    \le \frac{1}{K} \Big(\int_{\Sigma} (\mathcal{L}\psi)^2
    d\cmu\Big)^{\frac{1}{2}} \Big( \int_{\Sigma}
    \psi^2\,d\cmu\Big)^{\frac{1}{2}} 
    = \frac{1}{K} \Big(\int_{\Sigma} (f-\overline{f})^2 d\cmu\Big)^{\frac{1}{2}} \Big( \int_{\Sigma}
    \psi^2\,d\cmu\Big)^{\frac{1}{2}}\,, 
  \end{equation*}
  which implies
  \begin{align}    \label{estimate-followed-from-poincare}
    \begin{split}
        \Big(\int_{\Sigma} \psi^2\,d\cmu\Big)^{\frac{1}{2}} \le \frac{1}{K} 
\Big(\int_{\Sigma} (f-\overline{f}\,)^2\,d\cmu\Big)^{\frac{1}{2}} 
      \quad \mbox{and}\quad
 \chi_f^2 = -2\int_{\Sigma} (\mathcal{L}\psi) \psi\,d\cmu 
\le \frac{2}{K} \int_{\Sigma} (f-\overline{f}\,)^2\,d\cmu\,.
    \end{split}
  \end{align}
  The conclusion follows after we combine
  \eqref{estimate-followed-from-poincare} with the mean square error estimate
  of Theorem~\ref{thm:Err}.
\end{proof}

Finally, we prove Theorem~\ref{thm-numerical-theta}.
\begin{proof}[Proof of Theorem~\ref{thm-numerical-theta}]
We denote by $C>0$ a generic constant independent of $n,h$ and $\Delta t$.
  Recall that $\widetilde{x}^\lb$ and $\widetilde{x}^{(\l+\frac{1}{2})}$, $\l=0,1,\dots$, are the states given by the
  scheme~\eqref{scheme-non-reversible-numerical} with $\widetilde{x}^{(0)}\in \Sigma$.
  Define  
  \begin{equation}
    x^{(0)}=\widetilde{x}^{(0)} \ \ \text{ and } \ \ x^{(\l+1)}= \Theta^{A}(\widetilde{x}^{(\l+\frac{1}{2})}) \,, 
     \quad \l = 0,1,\dots\,.
     \label{yl}
  \end{equation}
  Since $\widetilde{x}^\lb \in \Sigma^{(\eps_{\mathrm{tol}})}$, there exists $h_0>0$, such that 
  $\widetilde{x}^{(\l+\frac{1}{2})} \in \Sigma^{(\delta)}$, for all $\l\ge 0$. Therefore using  Assumption~\ref{assump-3} we find  
  \begin{equation}
%    x^{(0)} = \widetilde{x}^{(0)}\,,\quad \mbox{and}~ 
    |\widetilde{x}^{(\l+1)} - x^{(\l+1)}|=
     \left|\Tnum(\widetilde x^{(\l+\frac{1}{2})}) -
    \Theta^A(\widetilde x^{(\l+\frac{1}{2})})\right| \le C(\Delta t)^p \,, \quad \forall ~ \l \ge 0\,.
    \label{bound-of-el}
  \end{equation}
  Using \eqref{scheme-non-reversible-numerical} and \eqref{yl}, it is straightforward to verify that 
\begin{equation}\label{delta-and-x-numerical}
  \widetilde{x}^{(\ell+\frac{1}{2})}= x^\lb + \bm{\delta}^\lb  \ \ 
  \text{ and } \ \  x^{(\l+1)}  = \Theta^A(x^{(\l)} + \bm{\delta}^{\lb})\,,\quad \forall ~ \l=0,1,\dots, 
\end{equation}
with (compare with \eqref{delta-vector-by-b})
  \begin{equation}      \label{delta-vector-b-numerical}
    \bm{\delta}^\lb = \bm{b}^\lb h + \sqrt{2\beta^{-1} h}\,\sigma^\lb\bm{\eta}^\lb + \mathbf{r}^\lb\,,
\end{equation}
where  $\bm{b}^\lb=(b^\lb_1, b^\lb_2, \dots, b^\lb_d)^T$ is the vector with components given in \eqref{b-vector} (evaluted at $x^\lb$),
  $\sigma^{\lb}=\sigma(x^\lb)$, $\bm{\eta}^\lb$ is the random variable in \eqref{conditions-on-eta},
  and $\mathbf{r}^\lb=(r^\lb_1, r^\lb_2, \dots, r^\lb_d)^T$ with
  \begin{equation}  
    \begin{aligned}
      r_i^\lb &= 
      \widetilde{x}^\lb_i - x^\lb_i +  \bigg\{\sum_{j=1}^d \bigg[-\big(a_{ij}(\widetilde{x}^\lb)-A_{ij}\big)\frac{\partial U}{\partial
    x_j}(\widetilde{x}^\lb)+\frac1\beta \frac{\partial a_{ij}}{\partial
    x_j}(\widetilde{x}^\lb)\bigg] - b^\lb_i\bigg\}h \\
      & + \sqrt{2\beta^{-1} h}\sum_{j=1}^{d_1} \big(\sigma_{ij} (\widetilde{x}^\lb) -
      \sigma_{ij}(x^\lb)\big) \eta^{(l)}_j\,, 
    \end{aligned}
      \label{compoent-ril}
\end{equation} 
  for $1 \le i \le d$.

  Since the functions in \eqref{compoent-ril} are sufficiently regular (see
  Assumption~\ref{assump-1}), $\bm{b}^\lb$ is given by \eqref{b-vector} and 
$\bm{\eta}^\lb$ is almost surely bounded, the uniform bound
  \eqref{bound-of-el} implies that $|\mathbf{r}^\lb| \le C (\Delta t)^p$
  almost surely (whenever $h\le 1$). Since $x^\lb\in \Sigma$ for
  $\l=0,1,\dots$, the proof of Theorem~\ref{thm:Err} (using the relations \eqref{delta-and-x-numerical}--\eqref{delta-vector-b-numerical}) 
  carries over for $\frac1n
  \sum\limits_{\l=0}^{n-1}f(x^\lb)$, where the remainder terms arising due to $\mathbf{r}^\lb$ stay uniformly  bounded by $C(\Delta t)^p$.
  Concerning the running average $\widetilde{f}_n$ \eqref{running-average-approx}, using \eqref{bound-of-el} and the fact that the extension of $f$ 
  to $\Sigma^{(\eps_{\mathrm{tol}})}$ is $C^2$-smooth, we find
  \begin{equation}    \label{diff-running-averages-xy}
    \biggl|\widetilde{f}_n - \frac1n \sum\limits_{\l=0}^{n-1}f(x^\lb)\biggr| 
    = 
    \biggl|\frac1n \sum\limits_{\l=0}^{n-1} \big[f(\widetilde{x}^\lb) - f(x^{\lb})\big]\biggr|
    \le C(\Delta t)^p\,.
  \end{equation}
 The estimates in Theorem~\ref{thm-numerical-theta} then follow by using \eqref{diff-running-averages-xy}.
\end{proof}
\end{appendices}

{\small
\bibliographystyle{alphainitials}
\bibliography{NonRevSamp}
}
\end{document}